\newcommand{\R}{\mathbb{R}}
\newcommand{\C}{\mathbb{C}}
\newcommand{\bC}{\boldsymbol{C}}
\DeclareMathAlphabet\mathbfcal{OMS}{cmsy}{b}{n}
\newcommand{\mXG}{{\mathbf{X}_\Gamma}}
\newcommand{\mVG}{{\mathbf{V}_\Gamma}}
\renewcommand{\Re}{\mathrm{Re}\,}
\newcommand{\pt}{\partial_t}
\newcommand{\tauh}{^{\tau,h}}
\newcommand{\tauhn}{^{\tau,h}_n}
\newcommand{\Einc}{\E^\textnormal{inc}}
\newcommand{\Hinc}{\H^\textnormal{inc}}
\newcommand{\bnu}{\boldsymb \nu}
\newcommand{\bpsi}{{\boldsymbol{\psi}}}
\newcommand{\bxi}{\boldsymbol{\xi}}
\newcommand{\bseta}{\boldsymbol{\eta}}
\newcommand{\bwvarphi}{\widehat{\boldsymbol{\varphi}}}
\newcommand{\bwpsi}{\widehat{\boldsymbol{\psi}}}
\newcommand{\bd}{\textbf{d}}
\newcommand{\bphi}{\boldsymbol{\phi}}
\newcommand{\bvarphi}{{\boldsymbol{\varphi}}}
\newcommand{\bV}{\boldsymb{V}}
\newcommand{\bK}{\boldsymb{K}}
\newcommand{\bI}{\boldsymb{I}}
\newcommand{\divG}{\operatorname{div}_\Gamma}
\DeclareMathOperator{\curl}{\mathbf{curl}}
\newcommand{\norm}[1]{\left\lVert#1\right\rVert}
\newcommand{\abs}[1]{\left|#1\right|}
\newcommand{\jmp}[1]{[#1]}
\newcommand{\avg}[1]{\{#1\}}
\newcommand{\Cald}{\boldsymb C}
\newcommand{\Caldimp}{{\boldsymb C}_{\text{imp}}}
\renewcommand{\Re}{\operatorname{Re}}
\newcommand{\dist}{\operatorname{dist}}
\newcommand{\inc}{^\textnormal{inc}}
\newcommand{\tot}{^\textnormal{tot}}
\newcommand{\boldsymb}{\boldsymbol}
\newcommand{\E}{\boldsymb E}
\newcommand{\V}{\boldsymb V}
\newcommand{\X}{\boldsymb X}
\newcommand{\bL}{\boldsymb L}
\renewcommand{\H}{\boldsymb H}
\newcommand{\wE}{\widehat{\boldsymb E}}
\newcommand{\wH}{\widehat{\boldsymb H}}
\renewcommand{\a}{\boldsymb a}
\renewcommand{\b}{\boldsymb b}
\renewcommand{\c}{\boldsymb c}
\newcommand{\p}{\boldsymb p}
\newcommand{\ba}{\boldsymb a}
\newcommand{\be}{\boldsymb e}
\newcommand{\bu}{\boldsymb u}
\newcommand{\bv}{\boldsymb v}
\newcommand{\bx}{\boldsymb x}
\newcommand{\by}{\boldsymb y}
\newcommand{\vtn}{\underline{t_n}}
\newcommand{\bigcupdot}{\bigcup\hspace{-6pt}\cdotp\hspace{4pt}}
\newcommand{\eps}{\varepsilon}
\newcommand{\bone}{\mathbbm{1}}
\newcommand{\Ctrace}{C_\Gamma}
\newcommand{\ecl}{\color{black}}
\newcommand{\brev}{\color{black}}
\newcommand{\erev}{\color{black}}
\newtheorem{proposition}{Proposition}
\newtheorem{lemma}{Lemma}
\newtheorem{theorem}{Theorem}
\theoremstyle{custom1}
\newtheorem{definition}{Definition}[section]
\newtheorem{remark}[definition]{Remark}
\theoremstyle{custom2}
\author{J\"org Nick}
\address{
	Seminar for Applied Mathematics,
	ETH Zürich, 8092 Zürich, Switzerland 
}
\email{joerg.nick@math.ethz.ch}
\begin{document}

\title[Electromagnetic scattering with nonlinear boundary conditions]{Numerical analysis for electromagnetic scattering with nonlinear boundary conditions
}

%
%
%
%
%
\maketitle
\begin{abstract}
This work studies time-dependent electromagnetic scattering from obstacles whose interaction with the wave is fully determined by a nonlinear boundary condition.  In particular, the boundary condition studied in this work enforces a power law type relation between the electric and magnetic field along the boundary.
Based on time-dependent \brev jump relations \erev of classical boundary operators, we derive a nonlinear system of time-dependent boundary integral equations that determines the tangential traces of the scattered electric and magnetic fields. 
These fields can subsequently be computed at arbitrary points in the exterior domain by evaluating a time-dependent representation formula. 

Fully discrete schemes are obtained by discretising the nonlinear system of  boundary integral equations with Runge--Kutta based convolution quadrature in time and Raviart--Thomas boundary elements in space. Error bounds with explicitly stated convergence rates are proven, under the assumption of sufficient regularity of the exact solution. The error analysis is conducted through novel techniques based on time-discrete transmission problems and the use of a new discrete partial integration inequality. Numerical experiments illustrate the use of the proposed method and provide empirical convergence rates. 
\end{abstract}
\section{Introduction}

This work proposes and studies numerical schemes, which discretize Maxwell's equations in the context of wave scattering, where the interaction of the wave with the obstacle is governed by a nonlinear boundary condition. 

Asymptotic analysis of small scale effects on the boundary of obstacles, typically arising from thin coatings around the scatterer, yield a large variety of boundary conditions of practical interest. Starting from \cite{EN93}, such asymptotic models have been studied extensively  in the time-harmonic setting, for example in \cite{HJN05}, \cite{HJN08} or \cite{DHJ06}. 

When the material properties of the coating \brev exhibit \erev nonlinear phenomena, the derived boundary conditions may be nonlinear as well, as demonstrated in \cite{HJ01} and \cite{HJ02}, which derive asymptotic models for thin ferromagnetic coatings.
The presence of nonlinear phenomena naturally prohibits the use of time-harmonic techniques, which significantly complicates both the analysis and the numerical treatment of such problems. 
Consequently, the existing literature on nonlinear scattering problems is scarce and the numerical treatment of nonlinear scattering is rarely considered. 
The acoustic wave equation with nonlinear boundary conditions in the context of scattering has been analyzed in \cite{BanjaiRieder} and \cite{BanjaiLubich2019}.

\subsection{Problem setting}
Let  $\Omega$ denote an exterior Lipschitz domain, which is assumed to be
the complement of one or several bounded domains.  
The total electric field $\E^\text{tot}(x,t)$ and the total magnetic field $\H^\text{tot}(x,t)$ are said to be solutions of \emph{ Maxwell's equations} if
\begin{align}\label{maxwell-tot}
\begin{split}
\varepsilon\,\pt \E^\text{tot}-\curl \H^\text{tot} &= 0 \quad  \\ 
\mu \,\pt \H^\text{tot} + \curl \E^\text{tot} &=0
\end{split} \quad \text{ in the exterior domain }\Omega.
\end{align}
The permittivity $\varepsilon$ and the permeability $\mu$ in~$\Omega$ are known positive constants, which describe the material properties of the free space around the scatterer.

The total fields are initially, at time $t = 0$, assumed to have support away from the boundary $\Gamma$. The initial values are further assumed to be \brev derived from \erev of incident fields $(\E^\text{inc},\H^\text{inc})$, solutions to Maxwell's equations on the full space $\mathbb R^3$. This setting then allows for a formal decomposition of the total fields into unknown scattered fields, which initially vanish, and the known \brev incident \erev fields.

Throughout the paper, the wave speed $c$ is assumed to be set to one, which is always achieved by rescaling the time variable $t \to ct$. Consequently, the product of the permittivity $\varepsilon$ and permeability $\mu$ is assumed to be normalized, since 
$$
\eps\mu=c^{-2}=1.
$$
To completely avoid the occurence of physical constants in the analysis, we further employ the rescaling $\mu\H\to  \H$. This rescaled field $ \mu \H$ is, in the context of physics literature, also referred to as the magnetic field $\boldsymb B$. 
Applying this assumption and rescaling yields time-dependent Maxwell's equations without the physical constants $\varepsilon$ and $\mu$, which read
\begin{align}\label{maxwell-tot-EB}
\begin{split}
\pt \E\tot-\curl \H\tot &= 0 \quad  \\
\pt \H\tot + \curl \E\tot &=0
\end{split} \quad \text{ in the exterior domain }\Omega.
\end{align}


\noindent 
 The \emph{nonlinear boundary condition} studied here enforces a nonlinear relation between the traces of the electromagnetic fields and reads
\begin{align}\label{GIBC_tot}
 \E^\text{tot}\times \bnu +   \a(\H^\text{tot}\times \bnu)\times \bnu  = 0\quad \text{ on }\Gamma=\partial\Omega,
\end{align}
where $\bnu$ denotes the outer unit normal vector. Note that the rescaling of $\H$ with regards to the physical constant $\mu$ is, in this formulation of the boundary condition, assumed to be incorporated into the \brev pointwise \erev nonlinearity $\ba$.

Despite the \brev apparent simplicity of this problem formulation\erev, serious challenges arise both in the numerical treatment and analysis of the described problem, due to the nonlinearity of the boundary condition.   
Throughout the paper, the nonlinearity is restricted to be a power law of the following type
\begin{equation}\label{power-law}
\a (\bx )=\abs{\bx }^{\alpha-1}\bx \quad \text{for all } \bx \in\mathbb R^3,
\end{equation}
for some fixed $\alpha \in (0,1]$. 
The \brev restriction to \erev this type of boundary condition has been motivated by \cite{SD11} and \cite{SZ08}, which present numerical analysis for this class of boundary conditions on bounded domains in the context of various electromagnetic phenomena. In \brev a slightly different \erev form, this class of nonlinearities further appears in the evolution boundary condition studied in \cite{SV12}. Well-posedness results and analysis for Maxwell's equations with such boundary conditions can be found in \cite{ELN02}.
\brev As a consequence of the rescaling of $\H$, the boundary condition described here differs from the boundary conditions studied in the mentioned literature by the positive factor $\mu$. \erev

\subsection{Contributions of this paper}

The present paper gives, to the best of the author's knowledge, the first numerical analysis of time-dependent electromagnetic scattering with a nonlinear boundary condition. The derivation of the nonlinear time-dependent boundary integral equation results from the combination of techniques presented \brev for the acoustic scattering setting \erev in \cite{BanjaiRieder},\cite{BanjaiLubich2019} and \cite{BLN20}, with the electromagnetic Calderón operator proposed in \cite{KL17}. As such, the present boundary integral equations can be understood as a generalization of the case of linear boundary conditions for electromagnetic scattering, which was presented in \cite{NKL22}.

While the formulation of the boundary integral equation \brev arises from applying established techniques to the present \erev problem, the stability and error analysis conducted in this paper builds on novel techniques and yields new results. A particular challenge is the power-law form of the nonlinear boundary condition, which does not fulfill a monotonicity condition \brev as strong as required for \erev the numerical analysis in the acoustic case \cite{BanjaiLubich2019}. The present error analysis 
includes the following new ideas.
\begin{itemize}
	\item Energy techniques based on time-discrete transmission problems, where errors and defects are rewritten through Green's formula in terms of discrete fields away from the boundary, were derived and utilized.
	\item The stability analysis \brev is conducted \erev in the presence of a weak monotonicity condition fulfilled by the nonlinearity (as provided by Lemma~\ref{lem:pointwise-bounds-a}). This difficulty is circumvented by an a priori estimate on the numerical solution and a series of Hölder inequalities.
	\item A new discrete partial integration inequality for Runge-Kutta convolution quadrature discretizations based on Radau IIA multistage methods is shown and utilized.
	\item Time-harmonic bounds with superior dependence on the Laplace parameter $s$ of the potential operators for the time-harmonic Maxwell's equations, in the context of the functional analytic setting of the nonlinearity, are shown and employed to obtain pointwise error bounds \brev for \erev the numerical solution away from the boundary.
\end{itemize}



Combining these techniques yields error bounds with explicit convergence rates under regularity assumptions on the exact solution. Finally, the \brev present paper contains, to the best of the author's knowledge, the first numerical experiments for the presented problem \eqref{maxwell-tot-EB}--\eqref{power-law}.\erev


\brev 
\subsection{Structure of the paper}

The mathematical content of this paper starts from the next section, which provides the functional analytic framework surrounding Maxwell's equations and the nonlinear boundary condition. 
 Time-dependent nonlinear boundary integral equations are derived in Section~$3$ and a continuous stability result is formulated and proven. Section~$4$ introduces the Runge-Kutta convolution quadrature and gives a time-discrete scheme. 

The error analysis is then conducted in Section~$5$, in which rate-specific error bounds are shown for the numerical solution on the boundary $\Gamma$ and in the domain $\Omega$. 
Finally, Section~6 presents numerical experiments. Convergence plots give empirical error rates and visualize an example simulation of a scattered wave for a given scatterer.
\erev
\section{Framework and analytical background}
\label{section:SettingsGIBC}

%

Let  $\left(\E^\text{inc},\H^\text{inc}\right)$ denote \brev incident \erev waves, solutions to the time-dependent Maxwell's equations on the complete space $\R^3$, with initial support in the exterior domain $\Omega$ away from the boundary $\Gamma$.  
The quantities of interest are the scattered fields $\E=\E^\text{tot}-\E^\text{inc}$ and $\H=\H^\text{tot}-\H^\text{inc}$, which solve the following initial--boundary value problem of Maxwell's equations:
\begin{alignat}{2}
	\,\pt \E - \curl \H &= 0 \quad &&\text{in} \quad\Omega, \label{MW1}\\ 
	 \,\pt \H + \curl \E &=0 \quad &&\text{in} \quad\Omega,\label{MW2}\\
	\E\times \bnu +\a(\H\times \bnu+\H\inc\times \bnu
	) \times \bnu &=-\Einc \times \bnu \quad &&\text{on }\Gamma. \label{eq:gibc}
\end{alignat}
As the initial support of the \brev incident \erev waves is away from the boundary, the initial values in $\Omega$ for both $\E$ and $\H$ vanish. 

Asymptotic conditions for $|\bx|\to\infty$ are not necessary, as the finite wave speed $c=1$ implies that the fields $(\E,\H)$ have bounded support at any time~$t$. 

\brev Deriving well-posedness results for this nonlinear scattering problem are beyond the scope of this paper and can be found in the literature, c.f. \cite{ELN02}. \erev


 For the derivation of a weak formulation \brev and, subsequently, of \erev boundary integral equations it is crucial to give a functional analytic framework that is appropriate for the nonlinear scattering problem. 
The functional analytic setting of the nonlinear boundary condition \eqref{eq:gibc} needs to reconcile the properties of the tangential trace $\gamma_T$ and the nonlinear operator \brev spawned by \erev the composition with the nonlinearity $\ba$. The following section starts with the description of appropriate spaces for $\gamma_T$.
\subsection{Tangential trace and trace space $\mXG$ }
For a continuous vector field defined on the closure of the exterior domain, $\bv:\overline\Omega\to\C^3$, 
we define the \emph{ tangential trace} 
\begin{align*}
\gamma_T \bv= \bv|_\Gamma \times \bnu \qquad\text{on }\Gamma,
\end{align*}
where $\bnu$ denotes the outer unit surface normal.

Green's formula for the $\curl$ operator yields, for sufficiently regular vector fields $\bu,\bv:\overline \Omega\ \rightarrow  \mathbb C^3$, the identity
\begin{equation}\label{eq: Green}
\int_\Omega \curl \bu \cdot \bv- \bu \cdot \curl \bv \,   \textrm{d} \bx = \int_\Gamma (\gamma_T \bu \times \bnu) \cdot \gamma_T \bv \,\textrm{d}\bx,
\end{equation}
where the Euclidean inner product on $\C^3$ is denoted by the dot $\cdot$, defined by $\a \cdot \b = \overline{\a}^\top \b$ for $\a,\b\in\C^3$. The skew-hermitian sesquilinear form on the right-hand side, also referred to as the \emph{anti-symmetric pairing}, is denoted for continuous tangential vector fields on the boundary $\bphi,\bpsi:\Gamma\to\C^3$ by
\begin{equation}\label{skew}                               
[\bphi,\bpsi]_\Gamma = \int_\Gamma (\bphi\times\bnu)\cdot \bpsi \, \textrm{d}\sigma.
\end{equation}

Plugging solutions of Maxwell's equations into the Green's formula, i.e. setting ${\bu = \E}$ and ${\bv = \H}$ yields, for the exterior domain $\Omega$, the identity
\begin{align}\label{eq:time-green}
\begin{split}
\left[\gamma_T \H , \gamma_T \E \right]_\Gamma &=
\int_\Omega \curl \H\cdot \E   - \H \cdot \curl \E \,  \textrm{d} \bx
  \\ &=
 \dfrac{1}{2} \pt \int_\Omega  \abs{\E}^2 +  \abs{\H}^2\textrm{d} \bx.
 \end{split}
\end{align}


In the following, we describe the functional analytic setting of the tangential trace $\gamma_T$ as it has been derived in \cite{AV96} for smooth domains and in \cite{BCS02} for Lipschitz domains. \brev A natural space for the electromagnetic fields $\E$ and $\H$ is \erev
\begin{align*}
{\H(\curl,\Omega) = \{\bv \in \bL^2(\Omega)\,:\, \curl \bv \in \bL^2(\Omega) \}}.
\end{align*}

The tangential trace $\gamma_T$ extends to a surjective bounded linear operator from this space into a trace space $\mXG$, namely
$$
\gamma_T \, \colon \, \H(\curl,\Omega) \rightarrow \mXG.
$$

The Hilbert space $\mXG$, \brev equipped \erev with the appropriate norm $\|\cdot\|_\mXG$, is occasionally referred to as the \emph{proper trace space}.
More background on the functional analytical setting for $\gamma_T$ is described in the surveys \cite[Sect.\,2.2]{BH03} and \cite[Sect.\,5.4]{Ned01}.

The proper trace space $\mXG$ consists of those functions in the tangential subspace of the Sobolev space $\H^{-1/2}(\Gamma)$  with surface divergence in $H^{-1/2}(\Gamma)$, i.e.
\begin{align*}
\mXG =\{\bphi \in \H^{-1/2}(\Gamma)\,:\, \bphi\times\bnu = 0, \, \,  \text{div}_\Gamma \, \bphi \in H^{-1/2}(\Gamma) \}.
\end{align*}
For a precise \brev definition \erev of the Sobolev space $H^{-1/2}(\Gamma)$ (in particular for Lipschitz domains) we refer to the cited publications above. 

 The anti-symmetric pairing $[ \cdot,\cdot ]_\Gamma$ extends to a non-degenerate continuous sesquilinear form on $\mXG\times \mXG$. As a consequence, $\mXG$ becomes its own dual by installing $[\cdot,\cdot]_\Gamma$ as the anti-duality.

The treatment of the nonlinear generalized impedance boundary conditions requires the combination of the natural setting for Maxwell's equations \brev given above with an appropriate setting for the power-law nonlinearity $\ba$. \erev The subsequent section describes some basic properties of $\ba$ and gives a \brev suitable \erev functional analytic framework of the associated nonlinear operator.

\subsection{Functional analytical setting for \brev the power-law nonlinearity $\ba$\erev}
Let $\alpha \in(0,1]$ be a given constant, then we repeat the power-law type nonlinearity from the boundary condition discussed in \cite{SZ08} and occuring in the nonlinear evolution equation described in \cite{SV12} and write
\begin{align*}
\ba(\bx) = \abs{\bx}^{\alpha -1}\bx, \quad \text{ for } \bx\in\mathbb{R}^3.
\end{align*}
This nonlinearity is positive, in the sense that for any $\bx\in\mathbb R^3$, we trivially obtain
\begin{align}\label{coerc-a}
	\bx\cdot \ba(\bx) = \abs{\bx}^{1+\alpha}.
\end{align}
This identity plays a crucial role in stability estimates regarding both the continuous problem \eqref{MW1}--\eqref{eq:gibc} and the numerical scheme proposed in the subsequent sections. To derive error estimates, a stronger form of positivity is necessary and almost provided by an intermediate result from \cite{SZ08}: the nonlinearity $\ba$ is \emph{monotone}, namely for arbitrary $\bu,\bv\in \mathbb R^3$ it holds that
\begin{align}\label{monotone-a}
	(\bu-\bv)\cdot (\ba(\bu)-\ba(\bv)) \ge 0.
\end{align}

The following Lemma gives bounds which will in particular include a stronger form of the monotonicity \eqref{monotone-a}, crucial in the error analysis in the following sections.

 \begin{lemma}[Pointwise bounds on $\ba$] \label{lem:pointwise-bounds-a}
 	For $\alpha \in(0,1]$ the nonlinearity $\ba(\bx)= \abs{\bx}^{\alpha-1}\bx$ fulfills a positivity condition stronger \brev than monotonicity, \erev namely for arbitrary ${\bu,\bv\in\mathbb R^3}$ it holds that
 	\begin{align}\label{pos:a}
 		(\bu-\bv)\cdot	(\ba(\bu)-\ba(\bv)) \ge \alpha (\abs{\bv}+\abs{\bu})^{\alpha-1}\abs{\bu-\bv}^2.
 	\end{align}
 	Furthermore, $\ba$ is Hölder continuous,
 	as the bound
 	\begin{align}\label{hoelder}
 		\abs{\ba(\bu)-\ba(\bv)}\le 2 \abs{\bu-\bv}^{\alpha},
 	\end{align}
 	holds for all $\bu,\bv\in\mathbb R^3$.
 \end{lemma}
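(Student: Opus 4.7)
The plan for both bounds is to use the fundamental theorem of calculus along the straight-line parametrization $\gamma(t) := \bv + t(\bu-\bv)$ for $t \in [0,1]$, together with the explicit Jacobian
\begin{equation*}
D\ba(\bx) = |\bx|^{\alpha-1} I + (\alpha-1)|\bx|^{\alpha-3}\bx\bx^\top, \qquad \bx \neq 0.
\end{equation*}

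For the strong monotonicity \eqref{pos:a}, the associated quadratic form expands as $\be^\top D\ba(\bx)\be = |\bx|^{\alpha-1}|\be|^2 + (\alpha-1)|\bx|^{\alpha-3}(\bx\cdot\be)^2$. Since $\alpha-1 \le 0$, the Cauchy-Schwarz bound $(\bx\cdot\be)^2 \le |\bx|^2|\be|^2$ yields the sharp pointwise lower bound $\be^\top D\ba(\bx)\be \ge \alpha|\bx|^{\alpha-1}|\be|^2$. I would then write
\begin{equation*}
(\bu-\bv)\cdot(\ba(\bu)-\ba(\bv)) = \int_0^1 (\bu-\bv)^\top D\ba(\gamma(t))(\bu-\bv)\,dt \ge \alpha|\bu-\bv|^2 \int_0^1 |\gamma(t)|^{\alpha-1}\,dt,
\end{equation*}
and combine $|\gamma(t)| \le (1-t)|\bv| + t|\bu| \le |\bu|+|\bv|$ with $\alpha-1 \le 0$ to deduce $|\gamma(t)|^{\alpha-1} \ge (|\bu|+|\bv|)^{\alpha-1}$, which after integration gives exactly \eqref{pos:a}. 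The only subtlety is that when $\bu$ and $\bv$ are antiparallel with suitable magnitudes, the segment $\gamma$ crosses the origin; there $|\gamma(t)|^{\alpha-1}$ has an $L^1$-integrable singularity of order $|t-t_\ast|^{\alpha-1}$ (integrable since $\alpha > 0$), so the identity and the bound persist by a standard approximation argument.

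For the Hölder bound \eqref{hoelder}, the same Jacobian has operator norm $\|D\ba(\bx)\|_2 = |\bx|^{\alpha-1}$, since the eigenvalues of $I + (\alpha-1)\widehat\bx\widehat\bx^\top$ are $\alpha$ and $1$, both in $(0,1]$. I would split into two regimes. When $|\bu-\bv|$ dominates $\min(|\bu|,|\bv|)$, the triangle inequality $|\ba(\bu)-\ba(\bv)| \le |\bu|^\alpha + |\bv|^\alpha$, together with $|\bu|,|\bv| \le |\bu-\bv|+\min(|\bu|,|\bv|)$ and the subadditivity $(a+b)^\alpha \le a^\alpha + b^\alpha$ for $\alpha \in (0,1]$, $a,b \ge 0$, produces the bound directly. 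In the opposite regime the segment $\gamma$ stays bounded away from $0$, and the FTC bound $|\ba(\bu)-\ba(\bv)| \le |\bu-\bv|\int_0^1 |\gamma(t)|^{\alpha-1}\,dt$ can be estimated using the distance of $\gamma$ to the origin to yield $\le 2|\bu-\bv|^\alpha$, the constant $2$ arising from the worst configuration of near-antiparallel vectors.

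The principal obstacle is that $D\ba$ is unbounded near the origin for $\alpha < 1$, which precludes a global Lipschitz estimate and forces the case split above. An equivalent technical device is the decomposition $\ba(\bu)-\ba(\bv) = |\bu|^{\alpha-1}(\bu-\bv) + (|\bu|^{\alpha-1}-|\bv|^{\alpha-1})\bv$, controlled by the scalar Hölder inequality $||\bu|^\alpha-|\bv|^\alpha| \le ||\bu|-|\bv||^\alpha$. It is equally important to keep the Cauchy-Schwarz step in \eqref{pos:a} sharp so that exactly the prefactor $\alpha$ appears, since this constant is what the later energy and stability arguments in the paper rely on.
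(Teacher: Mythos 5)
Your proof of the strong monotonicity \eqref{pos:a} is the paper's argument verbatim: the explicit Jacobian \eqref{eq:Da-explicit-form}, the Cauchy--Schwarz estimate $\by^T\boldsymb D\ba(\bx)\by\ge\alpha\abs{\bx}^{\alpha-1}\abs{\by}^2$, the fundamental theorem of calculus along the segment, and finally $\abs{\bv+\theta(\bu-\bv)}\le\abs{\bu}+\abs{\bv}$ together with $\alpha-1\le 0$. Your additional observation that the segment may cross the origin, where $\boldsymb D\ba$ blows up, and that the resulting singularity $\abs{t-t_*}^{\alpha-1}$ is integrable so the identity survives an approximation argument, is a point the paper passes over in silence; it is worth recording.

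For the Hölder bound \eqref{hoelder} the paper gives no proof (it cites \cite[Lemma 6.4]{SV12}), so here you are on your own, and your sketch has a genuine gap: the two-regime split cannot deliver the constant $2$ as described. In the regime where $\abs{\bu-\bv}$ dominates $\min(\abs{\bu},\abs{\bv})$ with threshold constant $1$, the triangle inequality plus subadditivity gives at best $\abs{\ba(\bu)}+\abs{\ba(\bv)}\le 3\abs{\bu-\bv}^\alpha$ (take $\abs{\bv}=\abs{\bu-\bv}$ and $\abs{\bu}\approx 2\abs{\bu-\bv}$); forcing the constant $2$ there requires shrinking the threshold to $\min(\abs{\bu},\abs{\bv})\le 2^{-1/\alpha}\abs{\bu-\bv}$, which for $\alpha<1$ pushes the antiparallel equal-magnitude configurations (where $\min(\abs{\bu},\abs{\bv})=\tfrac12\abs{\bu-\bv}$ and the segment passes through the origin) into the second regime. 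But there the operator-norm FTC bound is genuinely lossy: for $\bu=a\,\boldsymbol{e}_1$, $\bv=-a\,\boldsymbol{e}_1$ one computes $\abs{\bu-\bv}\int_0^1\abs{\gamma(t)}^{\alpha-1}\,\mathrm dt=2a^\alpha/\alpha$ against the true difference $2a^\alpha$ and the target $2^{1+\alpha}a^\alpha$, so the route through $\|\boldsymb D\ba\|$ overshoots by the factor $1/\alpha$ and fails for small $\alpha$. No single threshold reconciles the two regimes. The decomposition you mention at the end, $\ba(\bu)-\ba(\bv)=\abs{\bu}^{\alpha-1}(\bu-\bv)+(\abs{\bu}^{\alpha-1}-\abs{\bv}^{\alpha-1})\bv$, combined with the scalar inequality $1-r^{1-\alpha}\le(1-r)^{1-\alpha}$, is the right tool and is essentially how the cited reference proceeds; it should replace the line-integral argument for this part.
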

 \begin{proof}
 	The coercivity result builds upon an argument of the proof from \cite[Lemma 2.1]{SV12}, which is repeated here for the convenience of the reader. The Jacobian of the nonlinearity $\ba\,:\, \mathbb R^3\rightarrow \mathbb R^3$, in the following denoted by $\boldsymb D\ba\,:\,\mathbb R^3 \rightarrow \mathbb R^{3\times 3}$, is derived from standard differentiation rules and has the explicit form
 	\begin{align}\label{eq:Da-explicit-form}
 		\boldsymb D\ba(\bx) = (\alpha-1)\abs{\bx}^{\alpha-3} \bx \bx^T + \abs{\bx}^{\alpha-1}\bI_3\ \ \text{ for } \bx\in\mathbb R^3\setminus \{0\}, 
 	\end{align}
 	where $\bI_3\in \mathbb R^{3\times 3}$ denotes the identity matrix.
 	

The Jacobian is positive definite for all $\bx\in \mathbb R^3\setminus \{0\}$, \brev since \erev for any $\by\in \mathbb R^3$ we can apply the Cauchy-Schwarz inequality to obtain the estimate
 	\begin{align*}
 		\by^T \boldsymb D {\ba}(\bx) \by &=  (\alpha-1)\abs{\bx}^{\alpha-3}  (\bx^T\by)^2 + \abs{\bx}^{\alpha-1}\abs{\by}^2
 		\\ & \ge \left((\alpha-1)\abs{\bx}^{\alpha-3}  \abs{\bx}^2 + \abs{\bx}^{\alpha-1} \right)\abs{\by}^2 = \alpha \abs{\bx}^{\alpha-1}\abs{\by}^2.
 	\end{align*}
	 
 	The stated strong monotonicity \eqref{pos:a} of the nonlinearity $\ba$ is now a consequence of the positive definiteness of the Jacobian in combination with the fundamental theorem of calculus, which yields
 	\begin{align*}
 		(\bu-\bv)\cdot(\ba(\bu)-\ba(\bv)) &= (\bu-\bv)\cdot \int_0^1 \boldsymb D\ba(\bv+\theta(\bu-\bv)) (\bu-\bv) \, \mathrm d \theta
 		\\ &\ge
 		\alpha \int_0^1\abs{\bv+\theta(\bu-\bv)}^{\alpha-1} \abs{\bu-\bv}^2 \,\mathrm d \theta
 		\\ &\ge
 		\alpha (\abs{\bv}+\abs{\bu})^{\alpha-1}\abs{\bu-\bv}^2.
 	\end{align*}
 	For a proof of the Hölder continuity we refer the reader to \cite[Lemma 6.4]{SV12}.

 \end{proof}
 
 Pointwise bounds of the nonlinearity imply bounds \brev for \erev the nonlinear operator defined by the composition with $\a$, namely the operator $\bu\mapsto \ba \circ \bu $ for arbitrary ${\bu :\Gamma \rightarrow \mathbb R^3}$.
A natural space for this nonlinear operator is given by the tangential $\bL^p$ space for $p>1$ on the boundary $\Gamma$, which reads 
\begin{align*}
\bL_T^{p}(\Gamma) = \left\{ \bu\in \bL^{p}(\Gamma) \,\, |\,\bu\cdot \bnu=0 \,\right\},
\end{align*}
complete with the associated norm $\norm{\bu}_{\bL^p(\Gamma)}$ inherited from the full space $\bL^p(\Gamma)$.


The following result clarifies the relation of the nonlinear operator $\a$ with the tangential space $\bL_T^p(\Gamma)$.
\begin{lemma}\label{lem:setting-a}
	The nonlinear operator defined by the composition with the nonlinearity $\a(\bx)=\abs{\bx}^{\alpha-1}\bx$ for $\alpha \in (0,1]$ is a well-posed bijective operator 
	\begin{align}\label{a-operator}
		\a: \bL^{1+\alpha}_T(\Gamma) \rightarrow \bL^{\frac{1+\alpha}{\alpha}}_T(\Gamma).
	\end{align}
	
\end{lemma}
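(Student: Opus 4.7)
The proof is essentially a bookkeeping argument on exponents, combined with writing down an explicit inverse. I would proceed as follows.

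First, I would verify well-definedness. For $\bx \in \mathbb{R}^3$ the nonlinearity satisfies the pointwise identity $|\a(\bx)| = |\bx|^\alpha$, so for $\bu : \Gamma \to \mathbb{R}^3$ measurable,
\begin{equation*}
|\a(\bu(\by))|^{(1+\alpha)/\alpha} = |\bu(\by)|^{1+\alpha} \qquad \text{for a.e. } \by \in \Gamma.
\end{equation*}
Integrating over $\Gamma$ yields $\|\a \circ \bu\|_{\bL^{(1+\alpha)/\alpha}(\Gamma)} = \|\bu\|_{\bL^{1+\alpha}(\Gamma)}^{\alpha}$, which shows that $\a$ maps $\bL^{1+\alpha}(\Gamma)$ continuously into $\bL^{(1+\alpha)/\alpha}(\Gamma)$. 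Since $\a(\bx)$ is a scalar multiple of $\bx$ at every point, $\bu \cdot \bnu = 0$ implies $\a(\bu) \cdot \bnu = 0$, so the tangential subspace is preserved.

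Next, I would exhibit the inverse explicitly. Set
\begin{equation*}
\bc(\by) := |\by|^{1/\alpha - 1} \by \qquad \text{for } \by \in \mathbb{R}^3,
\end{equation*}
with the convention $\bc(0) = 0$. A direct calculation using $|\a(\bx)| = |\bx|^\alpha$ gives $\bc(\a(\bx)) = |\bx|^{\alpha(1/\alpha - 1)} |\bx|^{\alpha - 1} \bx \cdot |\bx|^{\ldots}$ — more cleanly, if $\by = \a(\bx) = |\bx|^{\alpha-1}\bx$ then $|\by| = |\bx|^\alpha$, so $|\bx| = |\by|^{1/\alpha}$ and $\bx = |\bx|^{1-\alpha}\by = |\by|^{(1-\alpha)/\alpha}\by = \bc(\by)$. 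The analogous computation gives $\a(\bc(\by)) = \by$. Hence $\bc$ and $\a$ are inverses pointwise.

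Finally, the same exponent computation applied to $\bc$ shows $|\bc(\by)|^{1+\alpha} = |\by|^{(1+\alpha)/\alpha}$, so that $\bc$ maps $\bL^{(1+\alpha)/\alpha}_T(\Gamma)$ into $\bL^{1+\alpha}_T(\Gamma)$, with $\|\bc \circ \bv\|_{\bL^{1+\alpha}(\Gamma)} = \|\bv\|_{\bL^{(1+\alpha)/\alpha}(\Gamma)}^{1/\alpha}$. Combined with the pointwise inversion identities, this proves that the composition operator $\a : \bL^{1+\alpha}_T(\Gamma) \to \bL^{(1+\alpha)/\alpha}_T(\Gamma)$ is bijective with inverse given by composition with $\bc$. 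There is no real obstacle here; the only thing to be careful about is pairing the conjugate exponents $1+\alpha$ and $(1+\alpha)/\alpha$ correctly so that $\a$ and $\bc$ land in the right spaces.
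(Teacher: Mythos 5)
Your proof is correct and follows essentially the same route as the paper: the pointwise identity $|\a(\bx)| = |\bx|^\alpha$ gives the norm identity $\|\a(\bu)\|_{\bL^{(1+\alpha)/\alpha}(\Gamma)} = \|\bu\|_{\bL^{1+\alpha}(\Gamma)}^{\alpha}$ for well-definedness, and the explicit inverse $\bx \mapsto |\bx|^{(1-\alpha)/\alpha}\bx$ (identical to your $\bc$) handles bijectivity by the same exponent bookkeeping. The only addition beyond the paper's argument is your explicit check that tangentiality is preserved, which is a harmless and correct extra detail.
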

 \begin{proof}
 The well-posedness of $\ba$ on the stated spaces follows by observing 
 \begin{align}\label{bound-a}
 	\norm{\ba(\bu)}_{\bL^{\frac{1+\alpha}{\alpha}}(\Gamma)} &=
 	\norm{\abs{\bu}^{\alpha-1}\bu}_{\bL^{\frac{1+\alpha}{\alpha}}(\Gamma)}
 	=\norm{\abs{\bu}^{\alpha}}_{L^{\frac{1+\alpha}{\alpha}}(\Gamma)}
 	= \norm{\bu}^{\alpha}_{\bL^{1+\alpha}(\Gamma)},
 \end{align}
which in particular implies the left-hand side is bounded for any $\bu\in \bL_T^{1+\alpha}(\Gamma)$.
The well-posedness of the operator defined through the composition with the inverse of $\ba$, which has the closed form
 $$\ba^{-1}(\bx) = \abs{\bx}^{\frac{1-\alpha}{\alpha}} \bx,$$ is readily apparent by the same argument, proving that $\ba$ is a bijection.
 \end{proof}

%
%
%

 As the boundary $\Gamma=\partial \Omega$  of the scatterer is a bounded surface, we have the following chain of dense inclusions
 \brev
\begin{align}\label{chain-dense}
\bL^{\frac{1+\alpha}{\alpha}}_T(\Gamma)	\subset \bL^2_T(\Gamma) \subset \bL^{1+\alpha}_T(\Gamma)  .
\end{align}
\erev
Choosing $\bL_T^2$ as the pivot space, these spaces are dual to each other, since the reciprocal of their exponents add to one. 
 More precisely, the $\bL^2$-scalar product, denoted by $(\cdot,\cdot)_\Gamma$, is a continuous hermitian bilinear form on $\bL^{1+\alpha}_T(\Gamma)\times \bL^{\frac{1+\alpha}{\alpha}}_T(\Gamma)$ and makes these spaces their respective dual. The continuity is a consequence of the Hölder inequality, which guarantees for boundary functions $\bu$ and $\bv$ of appropriate regularity 
 the bound
\begin{align}\label{Lp-dual-est-1}
(\bu,\bv)_\Gamma &\le \norm{\bu}_{\bL^{1+\alpha}(\Gamma)}\norm{\bv}_{\bL^{\frac{1+\alpha}{\alpha}}(\Gamma)}
.
\end{align} 
Consequently, when understood in the setting of Lemma~\ref{lem:setting-a}, the nonlinear operator $\ba$ maps into the dual of its domain, where the anti-duality between both spaces is explicitly given by the extension of the $\bL^2$- pairing.

Turning towards the composition of the nonlinearity $\ba$ with traces of solutions to Maxwell's equations, we introduce the dense subspace 
$$\mVG=\mXG\cap \bL_T^{1+\alpha}(\Gamma)\subset\mXG,$$  
 equipped with the norm 
$
	{\norm{\bphi}_{\mVG}=\norm{\bphi}_{\mXG}+\norm{\bphi}_{\bL^{1+\alpha}(\Gamma)}}.
$
\brev
\begin{remark}[The scope of the analysis with respect to the nonlinearity $\ba$]\label{rem:class-a}
	In the subsequent sections, we develop an extensive numerical analysis of the described nonlinear scattering problem with the power-law type nonlinearity \eqref{power-law}. The analysis only relies (with respect to $\ba$) on the properties shown in Lemma~\ref{lem:pointwise-bounds-a}, the funcional analytic setting of Lemma~\ref{lem:setting-a} and the positivity \eqref{pos:a}. The presented analysis therefore extends to the larger class of nonlinearities that fulfill these identities (with an appropriate Banach space setting) and analogously for nonlinearities with similar bounds.
\end{remark}
\erev

\subsection{Temporal Sobolev spaces and convolutions}

\noindent Let $V$ \brev and $W$ be a arbitrary Banach spaces and further let \erev  $L(s)\colon V \rightarrow \brev W\erev$ be an analytic family of bounded linear operators for $\Re s> 0 $. Assume further that $L$ is \textit{polynomially bounded} in the \brev following way: there exists a real $\kappa$ and $\nu\ge 0$, and,  for every $\sigma >0$, \erev there exists a positive constant $M_\sigma <\infty$, such that
\begin{align}\label{eq:pol_bound}
\norm{L(s)}_{W\leftarrow V}&\leq M_\sigma \dfrac{\abs{s}^\kappa}{(\Re s)^{\nu}}, \quad\ \text{ Re } s \ge \sigma>0.
\end{align}

This polynomial bound (in terms of the parameter $s$) ensures that the inverse Laplace transform of $L(s)$ is a distribution of finite order of differentiation, vanishing on the negative real half-line $t < 0$. 
Throughout the paper, we use the operational calculus notation \brev of Heaviside \erev, which reads for any sufficiently regular function $g:[0,T]\to V$ 
\begin{equation} \label{Heaviside}
L(\pt)g = (\mathcal{L}^{-1}L) * g.
\end{equation}
The above definition is therefore a shorthand for the temporal convolution\brev, understood in a generalized distributional sense (c.f. \cite[Equation (2.2)]{L94})\erev, of the inverse Laplace transform of $L$ with~$g$. For two \brev analytic families of operators $A(s)$ and $B(s)$ satisfying \eqref{eq:pol_bound} \erev and mapping into compatible spaces for their composition to be well-posed, the associativity of convolution and the product rule of Laplace transforms yield the composition rule $B(\pt)A(\pt)g = (BA)(\pt)g$. 

Let $V$ denote a Hilbert space and further let $r\in \mathbb R$. We denote the Sobolev space of real order $r$  and $V$-valued functions on $\R$ by $H^r(\R,V)$. Furthermore, we expand this notation to finite intervals $(0, T )$ by writing
$$
H_0^r(0,T;V) = \{g|_{(0,T)} \,:\, g \in H^r(\R,V)\ \text{ with }\ g = 0 \ \text{ on }\ (-\infty,0)\} . 
$$
For integer order $r\ge 0$ the natural norm on $H_0^r(0,T;V)$ is equivalent to the norm $\| \pt^r g \|_{L^2(0,T;V)}$.
\brev The result \cite[Lemma 2.1]{L94} makes the Heaviside notation \eqref{Heaviside} rigorous, by extending the temporal convolution to operators acting on the temporal Sobolev spaces:  \erev 
Let $L(s)$ be an analytic family of polynomially bounded operators in the half-plane $\text{Re }s > 0$. Then, $L(\pt)$ extends by density to a bounded temporal linear operator
\begin{equation}\label{sobolev-bound}
L(\pt) : H^{r+\kappa}_0(0,T;V) \to H^r_0(0,T;W),
\end{equation}
for arbitrary real-valued orders $r$. It should be noted that the inclusion $H^r_0(0,T;W)\subset C^{r-1}([0,T];W)$ further implies pointwise bounds for $r\ge 1$.

\subsection{Weak formulation of the nonlinear boundary condition}

Let $\bphi$ denote an arbitrary continuous tangential vector field  on $\Gamma$. Taking the anti-symmetric product $[\cdot,\cdot]_\Gamma$ of the boundary condition \eqref{eq:gibc} with $\bphi$ yields
\begin{equation}
\label{gibc-weak - pre}
[\bphi,\gamma_T \E]_\Gamma +[\bphi, \ba(\gamma_T \H+\gamma_T \H\inc 
) \times \bnu]_\Gamma =-[\bphi,\gamma_T\E\inc]_\Gamma.
\end{equation}

Noting that the cross product with the unit normal $\bnu$ in the nonlinear term simplifies to a $\bL^2$- product \brev results in \erev the following \emph{weak formulation of the boundary condition \eqref{eq:gibc}}: the tangential traces $\gamma_T \E \in \bL^2(0,T;\mXG)$ and 
$\gamma_T \H \in \bL^2(0,T;\mVG)$, the boundary data of solutions $\E,\H\in \bL^2(0,T;\H(\curl,\Omega))\cap \H^1(0,T;\bL^2(\Omega)^3)$ to the Maxwell's equations in $\Omega$ with zero initial conditions \brev fulfills the weak boundary condition if \erev
\begin{equation}\label{gibc-weak}
[\bphi,\gamma_T \E]_\Gamma + (\bphi,\ba(\gamma_T\H+\gamma_T\H\inc ))_\Gamma = [\gamma_T \E^\text{inc},\bphi]_\Gamma
\qquad\text{for all $\bphi\in\mVG$},
\end{equation}
 for almost every $t\in (0,T)$.
All terms appearing in this formulation are well-defined under the stated regularity assumptions.

\subsection{\brev Laplace-domain \erev Maxwell's equations}
\label{section:time harmonic Maxwell}
Our interest lies in time-dependent problems, the study of which requires time-dependent potential and boundary operators. This section gives a short introduction into time-harmonic operators and their properties in the Laplace domain, whose implications on the time domain will be discussed in subsequent sections.


The \emph{time-harmonic Maxwell's equations} read, for $s\in\mathbb{C}$ with $\Re s>0$,
\begin{alignat}{2}
\label{TH-MW1}
s \widehat{\E}-\curl \widehat{\H} &= 0 
\quad &&\text{in} \ \Omega ,
\\ 
\label{TH-MW2}
s \widehat{\H} + \curl \widehat{\E} &=0 \quad &&\text{in} \ \Omega .
\end{alignat}




We recall basic notations associated with the boundary integral operators for the time-harmonic Maxwell's equations, following \cite{BH03,Ned01}.
The \emph{ fundamental solution} reads
\begin{align*}
G(s,\bx)= \dfrac{e^{-s \abs{\bx}}}{4\pi\abs{\bx}}, \qquad \Re s>0,\  \bx\in\mathbb{R}^3 \setminus \{0\}.
\end{align*}
Let $\bvarphi$ denote a regular complex-valued tangentia vector field on the boundary $\Gamma$. 
The electromagnetic \emph{single layer potential} operator $\mathcal{S}(s)$, applied to $\bvarphi$ and evaluated at $\bx\in \mathbb{R}^3 \setminus \Gamma$, is defined by \ecl
\begin{align*}
\mathbfcal{S}(s)\bvarphi(\bx)= -s\int_\Gamma G(s,\bx-\by)\bvarphi(\by)\, \text{d}\by + s^{-1} \nabla \int_\Gamma G(s,\bx-\by) \divG \bvarphi(\by) \, \text{d}\by,
\end{align*} 
and the electromagnetic \emph{double layer potential} operator $\mathbfcal{D}(s)$ is
defined by
\begin{align*}
\mathbfcal{D}(s)\bvarphi(\bx) = \curl\int_\Gamma G(s,\bx-\by)\bvarphi(\by)\text{d}\by.
\end{align*}

Any outgoing solution to the time-harmonic Maxwell's equations \brev is recovered from its \erev tangential traces by the representation formulas
\begin{align}
\label{eq:time-harmonic-kirchhoff-E}
\widehat{\E} &= - \, \mathbfcal{S}(s)\bigl( \gamma_T \widehat{\H}\bigr) + \mathbfcal{D}(s)\bigl(-\gamma_T\widehat{\E}\bigr)\quad \text{ in } \Omega , \\
\label{eq:time-harmonic-kirchhoff-H}
\widehat{\H} &= - \mathbfcal{D}(s) \bigl( \gamma_T \widehat{\H}\bigr) \,- \, \mathbfcal{S}(s) \bigl(-\gamma_T\widehat{\E}\bigr) \quad \text{ in } \Omega .
\end{align}

Despite our interest in the boundary value problem, which is purely formulated on the exterior domain $\Omega$, it will prove to be useful to employ techniques from the theory of transmission problems. These formulations are posed on the full space $\mathbb R^3$, which is assumed to \brev be partitioned as \erev ${\mathbb R^3 = \Omega^-\bigcupdot\Gamma \bigcupdot \Omega^+}$.
Whenever the framework of transmission problems is employed, $\Omega^+$ denotes the exterior domain of interest (elsewhere referred to as $\Omega$), whereas $\Omega^-$ denotes the bounded interior domain.

 Quantities defined purely on the exterior domain, such as the unkown scattered fields, are naturally extended by zero in the inside of the scatterer $\Omega^-$.

 We define \emph{jumps} and \emph{averages}, which are bounded operators from ${\H(\curl,\mathbb R^3\setminus \Gamma)}$ into the trace space $\mXG$, by
\begin{align*}
\jmp{\gamma_T}=\gamma^+_T-\gamma^-_T,\quad\quad
\avg{\gamma_T}=\tfrac12\left(\gamma^+_T+\gamma^-_T\right).
\end{align*}

Using the average trace operator we define the electromagnetic \emph{single and double layer boundary operators} as the composition with the potential operators
$$
\bV(s)=\avg{\gamma_T}\circ \mathbfcal{S}(s), \qquad \bK(s)=\avg{\gamma_T}\circ \mathbfcal{D}(s).
$$ 

Building on the boundary operators, we define the \emph{Calder\'{o}n operator} as introduced in \cite{KL17}, with a sign corrected in \cite{NKL2020}: 
\begin{align*}
\Cald(s)=\begin{pmatrix}
-\bV(s) & \bK(s) \\
-\bK(s) & -\bV(s)
\end{pmatrix}
= \avg{\gamma_T}\circ\begin{pmatrix}
-\mathbfcal{S}(s) & \mathbfcal{D}(s) \\
-\mathbfcal{D}(s) & -\mathbfcal{S}(s)
\end{pmatrix}.
\end{align*}
The jump relations of the boundary integral operators now imply the following central identity.

\brev Any solution to the time-harmonic Maxwell's equations $\widehat \E,\widehat \H \in \H(\curl,\mathbb R^3 \setminus \Gamma)$ fulfills \erev

\begin{align}\label{eq:calderon-jump}
\Cald(s)\begin{pmatrix}\jmp{\gamma_T} \widehat \H\\ -\jmp{\gamma_T} \widehat \E \end{pmatrix}=
\begin{pmatrix}
\avg{\gamma_T}\widehat{\E} \\ 
\avg{\gamma_T}\widehat{\H}
\end{pmatrix} .
\end{align}

The notation associated to the skew-hermitian pairing $[\cdot,\cdot]_\Gamma$ is extended from $\mXG\times \mXG$ to $\mXG^2\times \mXG^2$ componentwise:
$$
\left[ \begin{pmatrix} \bvarphi \\ \bpsi \end{pmatrix} , \begin{pmatrix} \bxi \\ \boldsymb\eta \end{pmatrix} \right]_\Gamma = 
[ \bvarphi,\bxi]_\Gamma + [ \bpsi,\boldsymb\eta]_\Gamma.
$$

The electromagnetic potential and boundary operators extend to bounded operators on their respective spaces, with the \brev trace space $\mXG$ on the boundary and $\H(\curl,\Omega)$ in the domain $\Omega$\erev. Crucial for an analysis of the time-dependent analogues of these operators are bounds where the $s-$dependence of the constants is explicitly known, effectively demanding polynomial bounds of the type \eqref{eq:pol_bound}. 
Estimates explicit in $s$ have been derived in \cite[Theorem 4.4]{BBSV13} and were sharpened in \cite[Lemma 3.4]{NKL22}. 
\\

\begin{lemma}\cite[Lemma 3.2 and Lemma 3.8]{NKL22}\label{lem:laplace-bounds}
The electromagnetic single and double-layer potential operators $\mathbfcal S(s)$ and $\mathbfcal D(s)$ extend to bounded linear operators from $\mXG$ to $\H(\curl,\mathbb R^3 \setminus \Gamma)$, which are bounded for $\Re s >0$ by
	\begin{align*}
	&\| \mathbfcal S(s) \|_{\H(\curl,\mathbb R^3\setminus \Gamma) \leftarrow \mXG} \le \Ctrace \dfrac{\abs{s}^{2}+1}{\Re s},
	\quad\
	\| \mathbfcal D(s) \|_{\H(\curl,\mathbb R^3\setminus \Gamma) \leftarrow \mXG} \le \Ctrace \dfrac{\abs{s}^{2}+1}{\Re s}.
	\end{align*}
\end{lemma}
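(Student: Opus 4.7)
The plan is to follow a Bamberger--HaDuong style energy argument, exploiting the fact that $\mathbfcal{S}(s)\bvarphi$ and $\mathbfcal{D}(s)\bvarphi$ generate, together with their rescaled $\curl$, solutions of the time-harmonic Maxwell system \eqref{TH-MW1}--\eqref{TH-MW2} on $\mathbb{R}^3 \setminus \Gamma$ with prescribed jumps across $\Gamma$ and exponential decay at infinity (coming from $e^{-s|\bx|}$ in $G(s,\cdot)$). First I would, for $\bvarphi\in\mXG$, set $\wE := \mathbfcal{S}(s)\bvarphi$ and $\wH := -s^{-1}\curl\wE$, verify from the explicit definitions that $(\wE,\wH)$ solves the time-harmonic Maxwell system in $\mathbb R^3\setminus\Gamma$, and record the standard jump relations for the single-layer potential, namely $[\gamma_T\wE]_\Gamma=0$ and $[\gamma_T\wH]_\Gamma$ proportional to $\bvarphi$.

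Next I would carry out the energy identity. Testing \eqref{TH-MW1} with $\overline{\wE}$, \eqref{TH-MW2} with $\overline{\wH}$, integrating over a large ball $B_R$, using Green's formula \eqref{eq: Green} separately on $\Omega^\pm\cap B_R$, adding the two identities, and then letting $R\to\infty$ (where boundary contributions on $\partial B_R$ vanish by exponential decay), the bulk cross terms cancel and only a jump contribution across $\Gamma$ survives. After applying the jump relations, this gives an identity of the form
\begin{equation*}
s\bigl(\|\wE\|_{\bL^2(\mathbb R^3)}^2+\|\wH\|_{\bL^2(\mathbb R^3)}^2\bigr) = c\,[\bvarphi,\avg{\gamma_T}\wE]_\Gamma,
\end{equation*}
for a harmless scalar $c$. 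Taking real parts and using that $[\cdot,\cdot]_\Gamma$ is continuous on $\mXG\times\mXG$ together with the bounded trace $\gamma_T:\H(\curl,\Omega^\pm)\to \mXG$ yields
\begin{equation*}
\Re s\,\bigl(\|\wE\|^2+\|\wH\|^2\bigr) \le C\,\|\bvarphi\|_\mXG\,\|\wE\|_{\H(\curl,\mathbb R^3\setminus\Gamma)}.
\end{equation*}

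To close the estimate I would write $X^2:=\|\wE\|_{\bL^2}^2+\|\wH\|_{\bL^2}^2$ and use $\curl\wE=-s\wH$, $\curl\wH=s\wE$ to get
\begin{equation*}
\|\wE\|_{\H(\curl,\mathbb R^3\setminus\Gamma)}^2 = \|\wE\|^2+|s|^2\|\wH\|^2 \le (1+|s|^2)X^2.
\end{equation*}
Substituting back gives $\Re s\cdot X^2 \le C\|\bvarphi\|_\mXG\sqrt{1+|s|^2}\,X$, hence $X\le C(1+|s|^2)^{1/2}\|\bvarphi\|_\mXG/\Re s$, and one more factor $\sqrt{1+|s|^2}$ produces the claimed bound $\Ctrace(|s|^2+1)/\Re s$ for $\|\mathbfcal{S}(s)\bvarphi\|_{\H(\curl,\mathbb R^3\setminus\Gamma)}$. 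The bound on $\mathbfcal{D}(s)$ follows from the very same scheme, this time with $\wE := \mathbfcal{D}(s)\bvarphi$, for which the standard jump relations put the tangential jump on $\wE$ rather than $\wH$.

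The main obstacle I anticipate is the careful accounting of the boundary term after Green's formula: one must verify that, after combining the two Maxwell identities and using the jump/average decomposition, the surviving boundary term is genuinely a $\mXG$-duality pairing between $\bvarphi$ and an average trace that can be absorbed into the $\H(\curl,\mathbb R^3\setminus\Gamma)$-norm of $\wE$. A secondary, more technical point is the justification of the vanishing of the contribution on $\partial B_R$ as $R\to\infty$, which requires quantitative exponential decay of $\wE$ and $\wH$ and is the step where one uses $\Re s>\sigma>0$ rather than merely $\Re s>0$.
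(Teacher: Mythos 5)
The paper does not prove this lemma; it is quoted verbatim from \cite[Lemmas 3.2 and 3.8]{NKL22}, and your energy argument is essentially the proof given there (phrased there via a coercive variational formulation of the transmission problem with prescribed tangential jumps rather than via Green's formula on $B_R$, but the computation --- real part of the Maxwell energy identity, jump/average decomposition of the surviving interface term, absorption of $\avg{\gamma_T}\wE$ into the $\H(\curl)$-norm, and the relations $\curl\wE=-s\wH$, $\curl\wH=s\wE$ to trade $\curl$-norms for $|s|$-weighted $\bL^2$-norms --- is the same and does yield $(|s|^2+1)/\Re s$). Two minor points: since $\wH=-s^{-1}\curl\mathbfcal S(s)\bvarphi=\mathbfcal D(s)\bvarphi$, your single energy identity already bounds both operators at once, and the limit $R\to\infty$ needs only $\Re s>0$ for the fixed $s$ under consideration (the uniformity in $s$ is in the final constant, not in that limiting step), so the restriction to $\Re s\ge\sigma$ you mention is not needed here.
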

Point evaluations \brev of the potentials \erev away from the boundary fulfill time-harmonic bounds of the same structure with an additional factor, which exponentially decays with respect to the real part of $s$. Bounds of this type are shown in \cite{BBSV13} for smooth domains and in \cite{NKL22} for Lipschitz domains. In the context of this work, slightly different norms are of interest, namely operator norms which derive from the $\bL^p(\Gamma)$ space. The following \brev lemma \erev gives such bounds, for surfaces that are at least $C^2$.  The higher regularity assumption on the boundary stems from the necessity of a partial integration formula on the boundary, to treat the divergence in the second summand of the single layer potential operator.
\begin{lemma}\label{lem:pointwise-potential-bound}
\brev Assume $\Gamma = \partial \Omega$ to be at least $C^2$ and let $\bx\in\Omega$ be a point away from the boundary, with  distance $d=\dist(\bx,\Gamma)>0 $. \erev There exists a positive constant $C$ independent of $s$ and $\bvarphi$, such that the pointwise bounds
 \begin{align*}
 \abs{\left(\mathbfcal S (s)\bvarphi\right) (\bx)} &\le C \abs{s}e^{-d \Re s} \norm{\bvarphi}_{\bL^p(\Gamma)}, \\
 \abs{\left(\mathbfcal D (s)\bvarphi\right) (\bx)} &\le C \abs{s} e^{-d \Re s} \norm{\bvarphi}_{\bL^p(\Gamma)},
 \end{align*}
 holds for all $1\le p < \infty$. By densitiy, \brev point evaluations of the \erev potential operators extend to linear bounded operators of the type $\mathbfcal S_{\bx} (s) : \bL^p(\Gamma)\rightarrow \mathbb C^3$, fulfilling the bound above. 
\end{lemma}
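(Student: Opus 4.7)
My plan is to reduce the bound to pointwise kernel estimates combined with H\"older's inequality on the compact boundary $\Gamma$, exploiting that $|\bx-\by| \ge d > 0$ for all $\by \in \Gamma$ makes the kernel $G(s,\bx-\cdot)$ smooth on $\Gamma$. I would first prove the estimates for smooth tangential densities $\bvarphi \in C^\infty_T(\Gamma)$ and then extend by density to $\bL^p_T(\Gamma)$; this is legitimate since smooth tangential fields are dense in $\bL^p_T(\Gamma)$ for $1 \le p < \infty$ and $|\Gamma|$ is finite. The starting point is a direct calculation from the explicit form of $G(s,\bz) = e^{-s|\bz|}/(4\pi|\bz|)$: for $|\bz|\ge d$ and $|\bz|\le R$, with $R$ bounded in terms of $\bx$ and the diameter of $\Gamma$, one obtains pointwise bounds
\begin{equation*}
|G(s,\bz)| \le C e^{-d\Re s},\quad |\nabla G(s,\bz)| \le C(1+|s|) e^{-d\Re s},\quad |\nabla^2 G(s,\bz)| \le C(1+|s|)^2 e^{-d\Re s},
\end{equation*}
with constants depending only on $d$ and $R$.

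For the double-layer operator, since $\bx\notin\Gamma$ the curl commutes with the integral, yielding $\mathbfcal{D}(s)\bvarphi(\bx) = \int_\Gamma \nabla_\bx G(s,\bx-\by)\times\bvarphi(\by)\,\mathrm d\by$; H\"older's inequality with conjugate exponent $p'$ together with the gradient bound above directly gives the claimed estimate $|\mathbfcal{D}(s)\bvarphi(\bx)| \le C|s|\,e^{-d\Re s}\,\|\bvarphi\|_{\bL^p(\Gamma)}$. For the single-layer operator, the first term $-s\int_\Gamma G\,\bvarphi\,\mathrm d\by$ is bounded analogously using the $|G|$-estimate. The second term $s^{-1}\nabla_\bx\int_\Gamma G\,\divG\bvarphi\,\mathrm d\by$ is the central difficulty, because $\divG\bvarphi$ cannot be controlled by $\|\bvarphi\|_{\bL^p(\Gamma)}$ alone. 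The key step is a surface integration by parts on the closed smooth surface $\Gamma$ (with no boundary terms), transferring the derivative onto the smooth kernel:
\begin{equation*}
s^{-1}\nabla_\bx \int_\Gamma G(s,\bx-\by)\divG\bvarphi(\by)\,\mathrm d\by \,=\, -s^{-1}\int_\Gamma \bigl(\sg_\by \nabla_\bx G(s,\bx-\by)\bigr)^{\!\top} \bvarphi(\by)\,\mathrm d\by.
\end{equation*}
H\"older together with the Hessian estimate then provide a bound of the form $|s|^{-1}(1+|s|)^2 e^{-d\Re s}\|\bvarphi\|_{\bL^p(\Gamma)}$, which reduces to the claimed $C|s|\,e^{-d\Re s}\,\|\bvarphi\|_{\bL^p(\Gamma)}$ under the implicit lower bound $\Re s \ge \sigma_0 > 0$ standard in the convolution-quadrature framework.

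The main obstacle is the surface integration by parts: while elementary for smooth $\Gamma$ and $\bvarphi$, it is the only way to avoid requiring regularity on $\divG\bvarphi$ and to close the estimate purely in terms of $\|\bvarphi\|_{\bL^p(\Gamma)}$, which is essential for the density extension to $\bL^p_T(\Gamma)$. The smoothness of $\Gamma$ assumed in the statement enters exactly at this step. A secondary subtlety is the bookkeeping of the $s$-dependence across the three components, so as to land precisely at the claimed $|s|$ factor; here the exponential factor $e^{-d\Re s}$ is available to absorb any residual polynomial growth in $|s|$ whenever $\Re s$ is bounded below by a fixed constant.
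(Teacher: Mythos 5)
Your proof is correct and follows essentially the same route as the paper's, which consists precisely of H\"older's inequality plus the surface integration by parts identity $\int_\Gamma G\,\divG\bvarphi\,\mathrm{d}\by=-\int_\Gamma(\sg G)\cdot\bvarphi\,\mathrm{d}\by$ that you single out as the key step for the second part of the single-layer potential. One small caveat: your closing remark that the factor $e^{-d\Re s}$ can absorb residual polynomial growth in $|s|$ is false when $|\operatorname{Im} s|\to\infty$ at fixed $\Re s$, but your argument never actually needs it --- the reduction of $|s|^{-1}(1+|s|)^2$ to $C|s|$ already follows from $|s|\ge\Re s\ge\sigma_0>0$.
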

\begin{proof}
The bounds are a direct consequence of Hölder's inequality. To show the bound for the second integral of the single-layer potential operator, we additionally use a partial integration on the surface, \brev which is provided by \cite[Lemma 16.1]{GT98}), and yields in the present setting \erev
\begin{align*}
\int_\Gamma G(s,\bx-\by) \divG \bvarphi(\by) \, \text{d}\by 
= 
-\int_\Gamma \left(\nabla_{\Gamma}G(s,\bx-\by)\right) \bvarphi(\by) \, \text{d}\by.
\end{align*}
\end{proof}

\section{Maxwell's equations with nonlinear boundary conditions}


This section combines the framework of temporal Sobolev spaces and convolutions with the electromagnetic time-harmonic operators, to enable a treatment of the time-dependent nonlinear scattering problem. A fundamental building block of our analysis is the time-dependent representation formula, which reads:

Let ${\E,\H \in \bL^2(0,T;\H(\curl,\Omega))}$ be time-dependent Maxwell solutions with vanishing initial conditions associated with their 
\brev tangential traces \erev $\bvarphi,\bpsi\in \bL^2(0,T;\mXG)$ defined via
\begin{equation}\label{phi-psi}
{\bvarphi=\gamma_T\H}, \qquad {\bpsi=-\gamma_T \E}.
\end{equation}
The electromagnetic fields $(\E,\H)$ and their respective boundary data $(\bvarphi,\bpsi)$ then fulfill the time-dependent representation formulas
\begin{align}
\label{eq:time-dependent-representation-E}
\E & = - \mathbfcal{S}(\pt)\bvarphi+\mathbfcal{D}(\pt)\bpsi, \\
\label{eq:time-dependent-representation-H}
\H & = - \mathbfcal{S}(\pt)  \bvarphi - \mathbfcal{D}(\pt) \bpsi . 
\end{align}

The time-dependent analogues to the boundary integral operators and consequently the Calderón operator are defined via the Heaviside notation of operational calculus \brev \eqref{Heaviside} \erev. Applying the inverse Laplace transform and the convolution theorem to \eqref{eq:calderon-jump} yields the \brev jump relations \erev of the time-dependent Calderón operator 
\begin{align}\label{eq:cald-identity}
\Cald(\pt)
\begin{pmatrix}
\gamma_T \H \\
-\gamma_T \E
\end{pmatrix}
= \dfrac{1}{2}
\begin{pmatrix}
\gamma_T\E \\
\gamma_T \H
\end{pmatrix} .
\end{align}
\brev Note that for the above expression to hold, we implicitly extended $\E$ and $\H$ by zero in the interior domain. \erev
These \brev jump relations \erev \brev have particularly been used \erev in the treatment of several linear and nonlinear boundary conditions in the context of hyperbolic problems \cite{BanjaiRieder,BanjaiLubich2019,BLN20,NKL22}. Following the combined ideas of these previous papers, we start by adding a symmetric block operator on both sides and arrive at
\begin{align}\label{eq:Bimp_s}
\begin{split}
\Caldimp(\pt)
\begin{pmatrix}
\gamma_T \H \\
-\gamma_T \E
\end{pmatrix}
=
\begin{pmatrix}
\gamma_T \E \\
0
\end{pmatrix},
\quad \quad 
\Caldimp(\pt) = \Cald(\pt)+ 
\begin{pmatrix}
0 & -\tfrac{1}{2}\bI \\
-\tfrac{1}{2}\bI  & 0
\end{pmatrix}
\end{split}.
\end{align}
Testing both sides with $(\boldsymb\eta,\boldsymb\xi)\in \mVG \times\mXG$ yields 
\begin{align*}
\left[
\begin{pmatrix}
\boldsymb\eta \\\boldsymb \xi
\end{pmatrix},
\Caldimp(\pt)
\begin{pmatrix}
\bvarphi \\
\bpsi
\end{pmatrix}
\right]_\Gamma
=\left[
\boldsymb\eta,\gamma_T \E\right]_\Gamma.
\end{align*}
Inserting the weak formulation of the nonlinear boundary condition \eqref{gibc-weak} on the right-hand side and rearranging all unknown terms to the left-hand side yields the weak formulation of the boundary integral equation studied throughout the rest of this paper.

%

\bigskip\noindent
{\bf Boundary integral equation:}
{\it Find, \brev for all $t \in[0,T]$, the boundary densities $(\bvarphi(t), \bpsi(t))\in \mVG\times\mXG$ \erev such that, for all ${(\boldsymb\eta,\boldsymb\xi) \in \mVG\times\mXG}$ it holds that}
\begin{align}
\label{bie-weak-nonlinear}
\left[
\begin{pmatrix}
\boldsymb\eta \\\boldsymb \xi
\end{pmatrix},
\Caldimp(\pt)
\begin{pmatrix}
\bvarphi \\
\bpsi
\end{pmatrix}
\right]_\Gamma+
(\bseta, \a(\bvarphi+\gamma_T\boldsymb\H\inc ))_\Gamma = [\gamma_T \E^\text{inc},\boldsymb\eta]_\Gamma.
\end{align}
Solutions of the time-dependent boundary integral equation coincide with Maxwell's solution fulfilling the weak form of the nonlinear boundary condition \eqref{gibc-weak}, which is proved in the next section.

\begin{remark}\label{rem:shift}
	The boundary integral equation above can be shifted in the frequency domain in the following sense. Let $\sigma >0$ be some constant and let $\widetilde \Caldimp(s) =  \Caldimp(s+\sigma)$. Then, \eqref{bie-weak-nonlinear} is equivalent to the boundary integral equation
	\begin{align}
	\label{bie-weak-shifted}
	\left[
	\begin{pmatrix}
	\boldsymb\eta \\ \boldsymb\xi
	\end{pmatrix},
	\widetilde \Caldimp(\pt)
	\begin{pmatrix}
	\widetilde	\bvarphi \\
	\widetilde	\bpsi
	\end{pmatrix}
	\right]_\Gamma+e^{-\sigma t}
	(\boldsymb\eta, \ba(e^{\sigma t}\widetilde \bvarphi+\gamma_T\boldsymb\H\inc ))_\Gamma = e^{-\sigma t}[\gamma_T \E^\text{inc},\boldsymb\eta]_\Gamma,
	\end{align}
	where the boundary densities are shifted via $\widetilde \bvarphi = e^{-\sigma t}\bvarphi$ and $\widetilde \bpsi = e^{-\sigma t}\bpsi$. Although this boundary integral equation is equivalent to \eqref{bie-weak-nonlinear}, their numerical discretization differ. In particular, parts of the subsequent error analysis only holds for the discretization of this shifted boundary integral equation, though numerical experiments indicate that the shift is not necessary for practical computations \brev (see Figure~\ref{fig:shifts}). The occurence of artificial shifts, that are unnecessary for practical computations but necessary for the convergence analysis, also arises in the context of space-time Galerkin methods for time-dependent boundary integral equations \cite{BH86a}.  \erev
\end{remark}

\subsection{\brev Estimates on the solution of the boundary integral equation \erev}

To prepare our investigations into the stability of the time-dependent nonlinear boundary integral equations, we introduce the following time-dependent transmission problem. \brev This result states \erev the central property of the potential operators and rigorously associates a transmission problem to any pair of time-dependent densities in $\mXG$ with sufficient temporal regularity.
 
 	Let $(\bvarphi,\bpsi)\in \H^k_0(0,T;\mXG\times \mXG)$ denote boundary densities, which are not \brev necessarily \erev boundary data of solutions to the time-dependent Maxwell's equations. The representation formulas \eqref{eq:time-dependent-representation-E}--\eqref{eq:time-dependent-representation-H} define fields $\E,\H \in \H^{k-2}_0(0,T;\H(\curl,\mathbb R ^3\setminus \Gamma))$, which are solutions to the time-dependent transmission problem 
 	\begin{alignat}{3}
 	\quad& \pt\E-\curl \H &&=0 
 	\quad\quad\quad &&\text{in} \quad \mathbb{R}^3\setminus\Gamma ,\label{eq:transmis-1}
 	\\
 	\quad& \pt\H + \curl \E &&=0 \quad &&\text{in} \quad \mathbb{R}^3\setminus\Gamma ,\label{eq:transmis-2} \\
 	&\quad\quad\quad \,\jmp{\gamma_T}\H&&=\bvarphi\,,\label{eq:transmis-3} \\
 	&\quad\quad-\jmp{\gamma_T}  \E&&=\bpsi \,.\label{eq:transmis-4}
 	\end{alignat}
 \brev The quantities $\partial_t \E$ and $\partial_t \H$ are to be understood as elements in the temporal Sobolev space $\H^{k-3}_0(0,T;\H(\curl,\mathbb R ^3\setminus \Gamma))$ and the identities above should therefore be understood in the same way\erev.
The time-dependent Maxwell's equations \eqref{eq:transmis-1}-\eqref{eq:transmis-2} hold by construction of the potential operators, whereas \eqref{eq:transmis-3}--\eqref{eq:transmis-4} are consequences of the \brev jump relations \erev of the potential operators.

Applying this result to solutions of the boundary integral equation gives the following theorem, a stability \brev result for \erev the boundary integral equation, which bounds solutions $(\bvarphi,\bpsi)$ and their respective fields $(\E,\H)$ in terms of the incident fields. Solutions of the nonlinear scattering problem \eqref{MW1}--\eqref{eq:gibc} and the boundary integral equation are shown to coincide. Consequently, we obtain a stability and uniqueness result for solutions of the nonlinear scattering problem, however, proving existence of solutions is beyond the scope of this paper. Well-posedness results are found in \cite{ELN02}.

 
\begin{theorem}(Bounds on the solution of the nonlinear scattering problem)\label{proposition: well-posedness-time-dependent}

Let   ${(\bvarphi,\bpsi)\in \H^{3}(0,T;\mVG\times\mXG)}$ be a solution of the nonlinear boundary integral equation \eqref{bie-weak-nonlinear}. The solution pair $(\bvarphi,\bpsi)$ is, in the sense of \eqref{phi-psi}, the boundary data of unique electromagnetic fields $\E$ and $\H$, which are strong solutions of Maxwell's equations and fulfill the weak formulation of the boundary condition \eqref{gibc-weak}. 
	
	Furthermore, there exists a constant $C>0$ such that the following bound on the boundary densities holds
	\begin{align*}
		&\int_0^{T}
		\norm{\bvarphi}_{\bL^{1+\alpha}(\Gamma)}^{1+\alpha}
		+\norm{\bpsi}_{\bL^{\frac{1+\alpha}{\alpha}}(\Gamma)}^{\frac{1+\alpha}{\alpha}}
		\mathrm d t 
	 \le
		C \int_0^{T} \norm{\gamma_T \H^{\mathrm{inc}}}^{1+\alpha}_{\bL^{1+\alpha}(\Gamma)}+\norm{\gamma_T \E^{\mathrm{inc}}}^{\frac{1+\alpha}{\alpha}}_{\bL^{\frac{1+\alpha}{\alpha}}(\Gamma)} \mathrm dt,
	\end{align*}
	under the assumption that all terms on the right-hand side are finite.
	Additionally, the pointwise (in time) norms of the electromagnetic fields $\E,\H\in \bC(0,T;\bL^2(\Omega))$ are bounded by the same estimate, namely for all $t\in[0,T]$ we have
	\begin{align*}
		\norm{\E(t)}_{\bL^2(\Omega)}^2+\norm{\H(t)}_{\bL^2(\Omega)}^2
		\le 	C \int_0^{t} \norm{\gamma_T \H^{\mathrm{inc}}}^{1+\alpha}_{\bL^{1+\alpha}(\Gamma)}+\norm{\gamma_T \E^{\mathrm{inc}}}^{\frac{1+\alpha}{\alpha}}_{\bL^{\frac{1+\alpha}{\alpha}}(\Gamma)} \mathrm d t'.
	\end{align*}
\brev In both cases, the constant $C>0$ depends only on $\alpha$. \erev
\end{theorem}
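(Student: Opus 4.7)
My plan is to reinterpret the BIE via the associated transmission problem, conclude that the interior fields vanish (so $(\bvarphi,\bpsi)$ are indeed the exterior Maxwell traces and the weak boundary condition \eqref{gibc-weak} holds), and then obtain the stability bounds from the exterior energy identity combined with the pointwise coercivity of the nonlinearity. To begin, I would use the representation formulas \eqref{eq:time-dependent-representation-E}--\eqref{eq:time-dependent-representation-H} to define $\E,\H$ on $\R^3\setminus\Gamma$ from the given $(\bvarphi,\bpsi)$; by construction these fields satisfy Maxwell's equations on both sides with vanishing initial data and jumps $\jmp{\gamma_T}\H=\bvarphi$, $-\jmp{\gamma_T}\E=\bpsi$. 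Combining the Calder\'on identity \eqref{eq:calderon-jump} with the definition of $\Caldimp$ in \eqref{eq:Bimp_s} rewrites $\Caldimp(\pt)(\bvarphi,\bpsi)^\top$ as $(\gamma_T^+\E,\gamma_T^-\H)^\top$, so the BIE \eqref{bie-weak-nonlinear} becomes
$$[\boldsymb\eta,\gamma_T^+\E]_\Gamma+[\boldsymb\xi,\gamma_T^-\H]_\Gamma+(\boldsymb\eta,\a(\bvarphi+\gamma_T\H\inc))_\Gamma=[\gamma_T\E\inc,\boldsymb\eta]_\Gamma.$$
Testing with $\boldsymb\eta=0$ forces $\gamma_T^-\H=0$; applying Green's formula \eqref{eq: Green} on the bounded interior $\Omega^-$ (with reversed normal) then yields $\pt\int_{\Omega^-}(|\E|^2+|\H|^2)\,dx=0$, and the vanishing initial data force $\E|_{\Omega^-}\equiv 0\equiv\H|_{\Omega^-}$. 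Thus $\bvarphi=\gamma_T\H$ and $\bpsi=-\gamma_T\E$ in the sense of \eqref{phi-psi}, and the remaining test $\boldsymb\xi=0$ recovers the weak boundary condition \eqref{gibc-weak}; these fields are unique by construction of the representation formulas.

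For the stability estimates I would apply Green's formula \eqref{eq:time-green} on $\Omega$ and test the weak boundary condition with $\boldsymb\eta=\gamma_T\H\in\mVG$. Setting $\boldsymb U:=\gamma_T\H+\gamma_T\H\inc=\gamma_T\H\tot$ and using the coercivity \eqref{coerc-a}, this yields
$$\tfrac{1}{2}\pt(\|\E\|^2_{\bL^2(\Omega)}+\|\H\|^2_{\bL^2(\Omega)})+\|\boldsymb U\|^{1+\alpha}_{\bL^{1+\alpha}(\Gamma)}=(\gamma_T\H\inc,\a(\boldsymb U))_\Gamma+[\gamma_T\E\inc,\boldsymb U-\gamma_T\H\inc]_\Gamma.$$
The right-hand side is controlled by H\"older \eqref{Lp-dual-est-1} together with $\|\a(\boldsymb U)\|_{\bL^{(1+\alpha)/\alpha}(\Gamma)}=\|\boldsymb U\|^\alpha_{\bL^{1+\alpha}(\Gamma)}$ from \eqref{bound-a}, and Young's inequality with conjugate exponents $(1+\alpha,(1+\alpha)/\alpha)$ absorbs all factors of $\|\boldsymb U\|^{1+\alpha}$ into the left-hand side up to an arbitrarily small constant. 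Integrating in time from $0$ to $t$ using the vanishing initial conditions delivers the pointwise field bound and the integrated estimate on $\boldsymb U$. The bound on $\bvarphi=\boldsymb U-\gamma_T\H\inc$ in $\bL^{1+\alpha}(\Gamma)$ is then immediate, and the bound on $\bpsi=-\gamma_T\E=\gamma_T\E\inc+\a(\boldsymb U)\times\bnu$ (obtained by rearranging \eqref{eq:gibc}) follows from the same identity for $\|\a(\boldsymb U)\|_{\bL^{(1+\alpha)/\alpha}(\Gamma)}$.

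The main technical hurdle I anticipate is the rigorous justification of choosing $\boldsymb\eta=\gamma_T\H$ as a test function in \eqref{gibc-weak}: this requires $\gamma_T\H\in\mVG$ for a.e. $t$ — which is precisely the regularity guaranteed by $\bvarphi\in\bL^2_0(0,T;\mVG)$ — and that Green's formula \eqref{eq:time-green} apply at the available temporal regularity, which I would handle by a standard density/mollification argument in time followed by passage to the limit. The remainder is bookkeeping of H\"older exponents, and the final constant $C$ depends only on $\alpha$ through the Young's inequality factors.
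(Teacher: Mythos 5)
Your proposal is correct and follows essentially the same route as the paper: reduction to the transmission problem via the Calder\'on jump relations, vanishing of the interior fields from the $\gamma_T^-\H=0$ equation and interior energy identity, then the exterior energy identity tested with $\bvarphi=\gamma_T\H$ combined with \eqref{coerc-a}, \eqref{bound-a}, H\"older and Young to absorb the $\|\bvarphi+\gamma_T\H\inc\|_{\bL^{1+\alpha}(\Gamma)}^{1+\alpha}$ terms. The only cosmetic difference is the $\bpsi$ estimate, which you obtain by rearranging the strong boundary condition while the paper runs the equivalent duality argument $\norm{\bpsi}_{\bL^{\frac{1+\alpha}{\alpha}}(\Gamma)}=\sup_{\boldsymb\eta\in\mVG}[\boldsymb\eta,\bpsi]_\Gamma/\norm{\boldsymb\eta}_{\bL^{1+\alpha}(\Gamma)}$ directly on the weak form.
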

\begin{proof}

The uniqueness is a direct consequence of the monotonicity  of $\ba$ and \brev properties of the time-domain Calderón operator $\Cald(\pt)$, which are \erev transported from the Laplace domain \cite[Lemma 3.1]{KL17} to the time domain via \cite[Lemma 2.2]{BLS15}.

We continue with the connection of that solution to the nonlinear scattering problem. Let $\E,\H\in \H^{1}_0(0,T;\H(\curl,\mathbb R^3\setminus \Gamma))$ be solutions to the associated transmission problem \eqref{eq:transmis-1}--\eqref{eq:transmis-4} defined by the time-dependent representation formulas. 

The \brev jump relations \erev of the temporal Calderón operator imply, \brev in the context of the transmission problem \eqref{eq:transmis-1}--\eqref{eq:transmis-4}, the identity \erev
\begin{align}
	\nonumber
	\Caldimp(\pt)\begin{pmatrix}\bvarphi\\ \bpsi\end{pmatrix}
	&=
	\Cald(\pt)\begin{pmatrix}\bvarphi\\ \bpsi\end{pmatrix}
	- \dfrac{1}{2}
	\begin{pmatrix}\bpsi\\ \bvarphi\end{pmatrix}
	\\ &
	=
	\begin{pmatrix}
		\avg{\gamma_T\E} \\ \avg{\gamma_T\H}
	\end{pmatrix}
	-\dfrac{1}{2}
	\begin{pmatrix}-\jmp{\gamma_T \E} \\ \jmp{\gamma_T  \H}\end{pmatrix}
	=
	\begin{pmatrix}
		\gamma_T^+\E\\
		\gamma_T^-\H
	\end{pmatrix}.
	\label{eq:Bimp-reduction}
\end{align}
In particular, both the trace theorem and the time-harmonic bounds on the Calderón operator in combination with \cite[Lemma 2.1]{L94} show that $\gamma_T^+\widehat{\E} $ and $\gamma_T^-\widehat{\H}$ are elements in $\H_0^{1}(0,T;\mXG)$.
Setting $\bseta$ and $\bxi$ pairwise to zero reduces the weak formulation \eqref{bie-weak-nonlinear}  to 
\begin{alignat}{2}
	\left[
	\boldsymb\eta,\gamma_T^+\E
	\right]_\Gamma+
	(\boldsymb\eta, \ba(\bvarphi+\gamma_T \H^{\text{inc}}
	))_\Gamma &= [\gamma_T \E^\text{inc},\boldsymb\eta]_\Gamma,  \quad &&\text{ for all }  \boldsymb\eta \in \mVG,\label{eq:cont-transformed-formulation-1}
	\\ [\bxi,\gamma_T^-\H ]_\Gamma &= 0, &&\text{ for all }  \bxi \in \mXG.\label{eq:cont-transformed-formulation-2}
\end{alignat}
The second equation implies $\gamma_T^-\H = 0$ for all $t\in[0,T]$, which plugged into the integrated Green's formula \eqref{eq:time-green} yields
\begin{align*}
	\dfrac{1}{2}\int_{\Omega^-}   \abs{\E}^2 +  \abs{\H}^2\textrm{d} x &=-\pt^{-1}\left[\gamma^-_T \H ,\gamma^-_T \E  \right]_\Gamma = 0.
\end{align*}
Consequently, the electromagnetic waves $\E$ and $\H$ vanish in the inner domain $\Omega^-$ and the boundary densities are given by the boundary data of the outer fields, namely \eqref{eq:transmis-3}-\eqref{eq:transmis-4} imply ${\bvarphi= \gamma^+_T \H}$ and ${\bpsi= -\gamma^+_T\E}$. 
Plugging these identites into \eqref{eq:cont-transformed-formulation-1} shows that $\E$ and $\H$, restricted to the outer domain $\Omega^+$, are the desired fields which fulfill the nonlinear boundary condition \eqref{eq:gibc}.

We turn our attention to the stated bounds, starting with Green's formula \eqref{eq:time-green} on the exterior domain $\Omega^+$, which reads in terms of the boundary densities
\begin{align*}
	\dfrac{1}{2}\ \left( \norm{\E}_{\bL^2(\Omega^+)}^2 +  \norm{\H}_{\bL^2(\Omega^+)}^2\right) &=\pt^{-1}\left[ \gamma^+_T \H,\gamma^+_T \E  \right]_\Gamma  =\pt^{-1}\left[ \bvarphi,\gamma^+_T \E  \right]_\Gamma .
\end{align*}
 By testing \eqref{eq:cont-transformed-formulation-1} with $\bvarphi$ and integrating both sides, we arrive at the term on the right-hand side, in which we insert Green's formula as described above to arrive at
\begin{align*}
	\dfrac{1}{2}\ \left( \norm{\E(t)}_{\bL^2(\Omega^+)}^2 +  \norm{\H(t)}_{\bL^2(\Omega^+)}^2\right) +
\int_0^t(\bvarphi, \ba(\bvarphi+\gamma_T \H^{\text{inc}}
))_\Gamma \,\mathrm d t' &= \int_0^t[\gamma_T \E^\text{inc},\bvarphi]_\Gamma \,\mathrm dt',
\end{align*}
for all $t\in[0,T]$.

Continuing with the nonlinear term on the left-hand side, we introduce an intermediate term, which is subsequently estimated via the Hölder inequality and the bound \eqref{bound-a} on $\ba$, which yields
\begin{align*}
	(\bvarphi, \,&\a(\bvarphi+\gamma_T \H^{\text{inc}}
	))_\Gamma 
	=
	 \norm{\bvarphi+\gamma_T \H^{\text{inc}}
		}_{\bL^{1+\alpha}(\Gamma)}^{1+\alpha}
	-(\gamma_T\H^{\text{inc}}, \a(\bvarphi+\gamma_T \H^{\text{inc}}
	))_\Gamma
	\\ & \ge
	\norm{\bvarphi+\gamma_T \H^{\text{inc}}}_{\bL^{1+\alpha}(\Gamma)}^{1+\alpha}
	-\left( \norm{\gamma_T\H^{\text{inc}}}_{\bL^{1+\alpha}(\Gamma)} \norm{ \bvarphi+\gamma_T \H^{\text{inc}}}_{\bL^{1+\alpha}(\Gamma)}^\alpha\right)
	\\ & \ge
\norm{\bvarphi+\gamma_T \H^{\text{inc}}
	}_{\bL^{1+\alpha}(\Gamma)}^{1+\alpha}
-\left(C \norm{\gamma_T\H^{\text{inc}}}_{\bL^{1+\alpha}(\Gamma)}^{1+\alpha} +\dfrac{1}{2}\norm{\bvarphi +\gamma_T \H^{\text{inc}}
}_{\bL^{1+\alpha}(\Gamma)}^{1+\alpha}\right),
\end{align*}
where the final estimate is obtained by the generalized Young's inequality.
Applying the same dual estimate on the resulting right-hand side again and absorbing the term depending on $\bvarphi$ leads to 
\begin{align*}
	&\norm{\E}_{\bL^2(\Omega^+)}^2 +  \norm{\H}_{\bL^2(\Omega^+)}^2 +\int_0^T \norm{\bvarphi}^{1+\alpha}_{\bL^{1+\alpha}(\Gamma)} \mathrm d t 
	\\
	& \quad \quad \le 
	C\int_0^T \norm{\gamma_T\E^{\text{inc}}}_{\bL^{\frac{1+\alpha}{\alpha}(\Gamma)}}^{\frac{1+\alpha}{\alpha}}
	+\norm{\gamma_T\H^{\text{inc}}}_{\bL^{1+\alpha}(\Gamma)}^{1+\alpha} \mathrm d t.
\end{align*}
It remains to show the stated bound on $\bpsi$, which is obtained by inserting ${\bpsi= -\gamma^+_T\E}$ into \eqref{eq:cont-transformed-formulation-1}, which yields for arbitrary $\bseta\in\mVG$ the estimate
\begin{align*}
\left[\boldsymb\eta,\bpsi	\right]_\Gamma
& =
	(\boldsymb\eta, \a(\bvarphi+\gamma_T \H^{\text{inc}}))_\Gamma
	- [\gamma_T \E^\text{inc},\boldsymb\eta]_\Gamma
	\\ 
	& \le
	\norm{\boldsymb\eta}_{\bL^{1+\alpha}(\Gamma)}\left(\norm{\bvarphi+\gamma_T \H^{\text{inc}}}_{\bL^{1+\alpha}(\Gamma)}^{\alpha}
	+\norm{\gamma_T \E^{\text{inc}}}_{\bL^{\frac{1+\alpha}{\alpha}(\Gamma)}}\right),
\end{align*}
where the Hölder inequality has been used to estimate both summands.
The density of $\mVG \subset \bL^{1+\alpha}(\Gamma)$ finally implies
\begin{align*}
	\norm{\bpsi}_{\bL^{\frac{1+\alpha}{\alpha}}(\Gamma)} &=
	\sup_{\boldsymb \eta \in \mVG}\dfrac{\left[
	\boldsymb \eta,\bpsi
	\right]_\Gamma}{\norm{\boldsymb \eta}_{\bL^{1+\alpha}(\Gamma)}} \le \norm{ \bvarphi+\gamma_T \H^{\text{inc}}}_{\bL^{1+\alpha}(\Gamma)}^{\alpha}
+\norm{\gamma_T \E^{\text{inc}}}_{\bL^{\frac{1+\alpha}{\alpha}(\Gamma)}}.
\end{align*}
Taking both sides to the power of $\frac{1+\alpha}{\alpha}$ yields the stated result.
\end{proof}
\brev
\begin{remark}[On the assumptions of Theorem~\ref{proposition: well-posedness-time-dependent}]
The additional regularity, required for the boundary densities $(\bvarphi,\bpsi)$, stems from the necessity to use point evaluations of the corresponding electromagnetic fields $(\E,\H)$. The resulting bounds indicate that such strong assumptions are not necessary, which could be formalized by a density argument in the context of linear problems. For the nonlinear setting here, such an argument might be possible, but is beyond the scope of this paper.
\end{remark}
\erev

\section{Semi-discretization in time by Runge--Kutta convolution quadrature}
\label{section:time semi-discrete}

\subsection{Runge-Kutta convolution quadrature}

%
A Runge-Kutta method  with $m$-stages is uniquely determined by its coefficients, which are collected in the \emph{Butcher tableau}
\begin{equation*}
\mathscr{A} = (a_{ij})_{i,j = 1}^m , \quad \b = (b_1,\dotsc,b_m)^T,
\quad \text{and} \quad \c = (c_1,\dotsc,c_m)^T .
\end{equation*}
%
The stability function of the Runge--Kutta method is given by $R(z) = 1 + z b^T (I - z \mathscr{A})^{-1} \bone$, where $\bone = (1,1,\dotsc,1)^T \in \R^m$. \brev The Runge--Kutta method is said to be A-stable if $I - z \mathscr{A}$ is nonsingular and the stability function fulfills $\abs{R(z)}\le 1$ for all $z\in\mathbb C$ with $\Re z \le 0$\erev. For more details about Runge-Kutta methods we refer to \cite{HairerWannerII}.

Runge--Kutta methods have been used extensively to construct convolution quadrature methods c.f. \cite{LubichOstermann_RKcq,BanjaiLubich2011,BLM11,BanjaiMessnerSchanz,BanjaiKachanovska,BanjaiRieder,BanjaiLubich2019}. Crucial for the treatment of wave propagation and scattering problems is the A-stability of the underlying time \brev stepping method, which prohibits the use of multistep methods of larger orders than $2$. \erev

A-stable Runge-Kutta methods of arbitrary order yield effective convolution quadrature schemes, which often outperform their counterparts based on multistep methods \cite{B10}.

The \emph{Runge--Kutta differentiation symbol} is defined by:
\begin{equation}
\label{eq:Delta}
\Delta(\zeta) = \Bigl(\mathscr{A}+\frac\zeta{1-\zeta}\bone \b^T\Bigr)^{-1} \in \C^{m \times m}, \qquad 
\zeta\in\C \hbox{ with } |\zeta|<1.
\end{equation}
This expression is well-defined for $|\zeta|<1$ if 
$R(\infty)=1-\b^T\mathscr{A}^{-1}\bone$ satisfies $|R(\infty)|\le 1$. 
In fact, the Sherman--Morrison formula yields for RadauIIA methods (see, e.g., \cite[Section~IV.5]{HairerWannerII})
\begin{align*}
	\Delta(\zeta) &= \mathscr{A}^{-1} -\frac{\zeta}{1-R(\infty)\zeta} \mathscr{A}^{-1} \bone \b^T \mathscr{A}^{-1}
= \mathscr{A}^{-1}(\bI_m -\zeta \bone \be_m^T),
\end{align*}
with $\be_m^T = (0,...,1)\in \mathbb R^m$ and $\bI_m\in \mathbb R^{m\times m}$ denoting the identity matrix.
The Runge--Kutta convolution quadrature weights are operators \brev $\textbf{W}_n(L):V^m \to W^m$ \erev defined by formally replacing the argument $s$ in $L(s)$ by the Runge--Kutta differentiation symbol $\Delta(\zeta)/\tau$, and \brev then expanding the operator-valued matrix function into the power series \erev
\begin{equation*}
\bL\Bigl(\frac{\Delta(\zeta)}\tau \Bigr) = \sum_{n=0}^\infty \textbf{W}_n(L) \zeta^n.
\end{equation*}
The convolution quadrature approximation of the temporal operator $L(\pt)$ is then \brev defined \erev by the discrete convolution
\begin{equation*}
\bigl(L(\partial_t^\tau) \boldsymbol g \bigr)_n = \sum_{j=0}^n \textbf{W}_{n-j}(L) \boldsymbol g_j
\end{equation*}
for any sequence $\boldsymbol g = (\boldsymbol g_n)_{n\in \mathbb N}\in V^m$.

The sequences often arise from function values, for which we introduce the following notation. Let ${ g:[0,T]\to V}$ be a time-dependent function, then we denote the vector of the evaluations at the stages by \brev $\boldsymb g_n = g(\vtn) =  \bigl(g(t_n+c_i\tau)\bigr)_{i=1}^m$. \erev

Generally, we will associate sequences with functions whenever notationally convenient, where sequences \brev that depend on $\tau$ are labeled by an \erev additional superscript $\tau$.

In particular, if $c_m = 1$, as is the case with Radau IIA methods \cite[Section~IV.5]{HairerWannerII}, then
the continuous convolution at $t_{n+1}$ is approximated by the last component of the discrete block convolution:
\begin{equation*}
\bigl(L(\partial_t) g \bigr)(t_{n+1}) \approx \mathbf{e}_m^T \bigl(L( \partial_t^\tau) \boldsymbol g \bigr)_n,
\end{equation*}
where $\mathbf{e}_m = (0, \dotsc, 0, 1)^T \in \R^m$ is the $m$-th unit vector.

The following convolution quadrature approximation result from \cite[Theorem~3]{BLM11}, formulated for the stages of the Radau IIA method in \cite[Theorem 4.2]{BanjaiLubich2019}, yields efficient bounds for temporal defects originating from the employed time discretization.

\begin{proposition}{\cite[Theorem~3]{BLM11}}
	\label{th:RK-CQ}
	\, Let $L$ satisfy \eqref{eq:pol_bound} and consider the Runge--Kutta convolution quadrature based on the Radau IIA method with $m \ge 2$ stages. Let $r>\max(m+1+\kappa,m+1)$ and $g \in C^r([0,T],V)$ satisfy $g(0)=g'(0)=...=g^{(r-1)}(0)=0$. Then, there exists a $\tau_0>0$, such that for $0<\tau\le \tau_0$ and $t_n=n\tau\in[0,T]$ the following error bound holds:
	\begin{align*}
	&\norm{\left(L(\pt^\tau) \boldsymbol g\right)_n-L(\pt)g(\vtn)}_W\\
	&\quad\quad\quad \le
	C \tau^{\min(m+1,m+1-\kappa+\nu)}
	\left(\norm{g^{(r)}(0)}_V+\int_0^t\norm{g^{(r+1)}(\lambda)}_V\mathrm{d}\lambda
	\right),
	\end{align*}
	\brev where $g(\vtn) = \left(g(t_{n}+c_i \tau)\right)_{i=1}^m$. \erev
\end{proposition}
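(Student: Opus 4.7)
Since the statement is quoted from \cite{BLM11}, I would in practice cite that paper. However, if I had to produce the estimate from scratch, my plan would be to pass to the Laplace/$Z$-transform domain, express both the continuous convolution and its Runge--Kutta discretisation as contour integrals against $L$, and then compare the symbols $L(s)$ and $L(\Delta(\zeta)/\tau)$ using the polynomial bound \eqref{eq:pol_bound}.

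First, I would set up the two representations. On the continuous side, the vanishing initial conditions on $g,g',\dots,g^{(r-1)}$ make the Laplace transform $\widehat g$ well-defined with enough decay, and
\[
L(\partial_t) g(t) = \frac{1}{2\pi i} \int_{\sigma -i\infty}^{\sigma+i\infty} L(s)\, e^{st}\, \widehat g(s)\, ds,\qquad \sigma>0.
\]
On the discrete side, the generating function identity $\sum_n \textbf{W}_n(L)\zeta^n = L(\Delta(\zeta)/\tau)$ combined with Cauchy's formula gives
\[
\bigl(L(\partial_t^\tau)\boldsymbol g\bigr)^n = \frac{1}{2\pi i}\oint_{|\zeta|=\rho} L\!\left(\frac{\Delta(\zeta)}{\tau}\right) \zeta^{-n-1}\,\widetilde g(\zeta)\, d\zeta,
\]
for some $\rho<1$, with $\widetilde g(\zeta) = \sum_{j\ge 0} \boldsymbol g_j \zeta^j$. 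The error $\bigl(L(\partial_t^\tau)\boldsymbol g\bigr)^n - (L(\partial_t)g)(t_n+c_i\tau)_{i=1}^m$ is then rewritten as a single contour integral after parametrising $\zeta = e^{-\tau s}$ along a curve with $\Re s = \sigma$.

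Second, I would use the spectral structure of $\Delta(\zeta)$. Diagonalising $\mathscr A^{-1}(\bI_m-\zeta\bone\be_m^T)$ shows that $\Delta(\zeta)/\tau$ has eigenvalues that, along the substitution $\zeta=e^{-\tau s}$, cluster near $s$ up to order $m+1$ (this is where the classical order $2m-1$ and stage order $m$ of Radau IIA, together with $|R(\infty)|=0$, combine to give exactly the stiff order $m+1$). The symbol difference
\[
L\!\left(\frac{\Delta(e^{-\tau s})}{\tau}\right) - L(s)\,\bI_m
\]
can then be estimated in operator norm by $C\,\tau^{m+1}|s|^{m+1}$ times the norm of the derivatives of $L$ at nearby points. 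Feeding in the polynomial bound $\|L(s)\|\le M_\sigma |s|^\kappa/(\Re s)^\nu$ produces the exponent shift that upgrades $\tau^{m+1}$ to $\tau^{\min(m+1,\,m+1-\kappa+\nu)}$ (the $-\kappa+\nu$ arises because, to preserve integrability of the Bromwich integrand, some factors of $|s|$ must be absorbed into further integration by parts against $\widehat g$).

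Third, I would convert the $|s|$-powers into derivatives of $g$ using integration by parts in the Bromwich integral, exploiting $\widehat{g^{(k)}}(s) = s^k\widehat g(s)-s^{k-1}g(0)-\dots$; since $g$ and its first $r-1$ derivatives vanish at $0$, these boundary terms disappear and one obtains bounds purely in terms of $g^{(r)}(0)$ and $\int_0^t\|g^{(r+1)}(\lambda)\|\,d\lambda$. The main obstacle, as usual in Runge--Kutta CQ analysis, is the delicate balancing at step two: tracking the order of contact between $\Delta(\zeta)/\tau$ and $s$ along the inverse-Laplace contour while keeping the integrals absolutely convergent in the presence of the polynomial growth of $L$. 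The full verification follows \cite{BLM11}, which I would cite rather than reproduce in detail.
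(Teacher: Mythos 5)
The paper gives no proof of this proposition at all --- it is quoted verbatim from \cite[Theorem~3]{BLM11} --- so your decision to cite that reference is exactly the paper's own approach. Your accompanying sketch (Laplace/$Z$-domain representation of both convolutions, comparison of the symbols $L(s)$ and $L(\Delta(e^{-\tau s})/\tau)$ using the polynomial bound \eqref{eq:pol_bound} and the stage order of Radau~IIA, then trading powers of $|s|$ for derivatives of $g$ via the vanishing initial data) faithfully reflects the strategy of the cited proof and raises no concerns.
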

This section \brev transfers \erev results from the time continuous domain onto the time discrete regime. A useful tool for that purpose is the use of generating functions, which we introduce in the following. 
Let $\Phi_n\in V^m$ denote a sequence with finite support and $m$ components in the Banach space $V$. Let furthermore $\sigma>0$ be a real, constant value and $\rho = e^{-\sigma \tau}$ a weight which \brev converges to $1$ \erev for $N\rightarrow \infty$. Operating on the complex contour ${\mathbb S_\rho=\{ z\in\mathbb C \,| \, \abs{z}=\rho\} }$, the generating function is denoted by an additional hat  ${\widehat \Phi : \mathbb S_\rho \rightarrow V^m}$  and defined by 
\brev the image of the $\mathcal Z$ transform applied to the sequence $\Phi$, which reads \erev
\begin{align*}
 \widehat \Phi \,\colon \, \zeta \mapsto \sum_{n=0}^{\infty}\Phi_n \, \zeta^n.
\end{align*}

Bilinear forms are extended to $\V^m\times \V^m$ by weighting with the diagonal weight matrix ${\mathscr B = \text{diag}(\b_1,...,\b_m)}$, which yields for the scalar product $\cdot$ \brev belonging to \erev $\V$ the extended definition
 \begin{align*}
\bu \cdot \bv={(\bu,\bv)_{\b} = \overline{\bu}^T\mathscr B \bv}= \sum_{i=1}^m \b_i \bu_i\cdot \bv_i \quad \quad \bu ,\bv \in \V^m,
 \end{align*}
where $\cdot$ on the right-hand side denotes the underlying \brev complex-valued \erev dot product on $\V$. In the same way, we extend the skew symmetric pairing \eqref{skew}. With \brev respect to \erev this positive bilinear form, the following result holds. 
\begin{lemma}\label{lem:discrete-partial-integration}(Discrete partial integration)
	Let $(\bu_n)_{i=1}^m$ and $(\bv_n)_{i=1}^m$ be vector-valued sequences in $\mathbb R^m$ and consider the $m-$ stage RadauIIA Runge-Kutta method. For any $\epsilon>0$, there exists a positive constant $C$ independent of $\tau$,$\bu$ and $\bv$ such that the following estimate holds
	\begin{align*}
		\sum_{n=0}^\infty \rho^n \bu_n\cdot\bv_n
		\le  \sum_{n=0}^\infty \epsilon \rho^n  \abs{\left((\pt^{\tau})^{-1} \bu\right)_n}^2+C\rho^n\abs{(\pt^{\tau}\bv)_n}^2.
	\end{align*}
	Here, $\pt^\tau$ and $\left(\pt^\tau\right)^{-1}$ denote the convolution quadrature discretization of the temporal convolutions $L(\pt)$ corresponding to $L(s)=s$ and $L(s)=s^{-1}$, respectively.
\end{lemma}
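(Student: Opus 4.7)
The proof proceeds via the generating-function formalism: apply the discrete Parseval identity on the circle $|\zeta|=\sqrt{\rho}$, substitute the defining relations between $\bu,\bv$ and their discrete ``primitive'' $\boldsymbol{U}_n := ((\pt^\tau)^{-1}\bu)_n$ and ``derivative'' $\boldsymbol{V}_n := (\pt^\tau\bv)_n$, and finally apply Young's inequality pointwise on the contour. The key step is a uniform-in-$\tau$ bound on the resulting matrix symbol, which relies on the stiff-accurate property of Radau IIA.

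First I would use Parseval for vector-valued sequences to rewrite
\[
\sum_{n=0}^\infty \rho^n \bu_n \cdot \bv_n \;=\; \frac{1}{2\pi}\int_0^{2\pi}\widehat{\bu}(\sqrt\rho\,e^{-i\theta})^T \mathscr{B}\,\widehat{\bv}(\sqrt\rho\,e^{i\theta})\,d\theta.
\]
The convolution theorem, combined with the symbols $\tau\Delta(\zeta)^{-1}$ of $(\pt^\tau)^{-1}$ and $\tau^{-1}\Delta(\zeta)$ of $\pt^\tau$, gives the generating-function relations
\[
\widehat{\bu}(\zeta) = \tau^{-1}\Delta(\zeta)\,\widehat{\boldsymbol{U}}(\zeta), \qquad \widehat{\bv}(\zeta) = \tau\,\Delta(\zeta)^{-1}\,\widehat{\boldsymbol{V}}(\zeta).
\]
Plugging these into the integrand produces the matrix factor $M(\zeta) := \Delta(\bar\zeta)^T\mathscr{B}\,\Delta(\zeta)^{-1}$, so the integrand equals $\widehat{\boldsymbol{U}}(\bar\zeta)^T M(\zeta)\,\widehat{\boldsymbol{V}}(\zeta)$.

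The hard part will be establishing that $\|M(\zeta)\|$ is bounded by a constant independent of $\tau$ on the contour $|\zeta|=\sqrt\rho$. Since $\sqrt\rho = e^{-\sigma\tau/2}\to 1$ as $\tau\to 0$ and $\Delta(\zeta)^{-1} = \mathscr{A}+\frac{\zeta}{1-\zeta}\bone\b^T$ has a simple pole at $\zeta = 1$, a termwise norm estimate only yields $\|\Delta(\zeta)^{-1}\| = O(1/\tau)$, which is not sufficient. The cancellation comes from the stiff-accurate property $\be_m^T\mathscr{A} = \b^T$ of Radau IIA, which is equivalent to $\mathscr{A}^T\be_m = \b = \mathscr{B}\bone$, hence $\mathscr{A}^{-T}\mathscr{B}\bone = \be_m$. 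Using this together with $\Delta(\bar\zeta)^T = (\bI_m - \bar\zeta\,\be_m\bone^T)\mathscr{A}^{-T}$, a direct computation gives
\[
M(\zeta) \;=\; (\bI_m - \bar\zeta\,\be_m\bone^T)\,\mathscr{A}^{-T}\mathscr{B}\mathscr{A} \;+\; \frac{\zeta(1-\bar\zeta)}{1-\zeta}\,\be_m\b^T.
\]
Since $|1-\bar\zeta| = |1-\zeta|$ holds in general, the remaining scalar factor reduces to $|\zeta(1-\bar\zeta)/(1-\zeta)| = |\zeta| = \sqrt\rho \le 1$. Thus the singular behaviour of $\Delta(\zeta)^{-1}$ at $\zeta = 1$ is cancelled exactly by the factor $(\bI_m - \bar\zeta\,\be_m\bone^T)$ from $\Delta(\bar\zeta)^T$, and $\|M(\zeta)\|$ is bounded by some constant $C_M$ depending only on the coefficients of the Radau IIA method.

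Once $M$ is uniformly bounded, Cauchy--Schwarz followed by Young's inequality gives pointwise on the contour, for any $\tilde\epsilon>0$,
\[
\bigl|\widehat{\boldsymbol{U}}(\bar\zeta)^T M(\zeta)\,\widehat{\boldsymbol{V}}(\zeta)\bigr| \;\le\; \tilde\epsilon\,\bigl|\widehat{\boldsymbol{U}}(\bar\zeta)\bigr|^2 \;+\; \frac{C_M^2}{4\tilde\epsilon}\,\bigl|\widehat{\boldsymbol{V}}(\zeta)\bigr|^2.
\]
Integrating over $\theta\in[0,2\pi]$ and applying Parseval once more to each term on the right converts these back into the sums $\sum_n\rho^n|\boldsymbol{U}_n|^2$ and $\sum_n\rho^n|\boldsymbol{V}_n|^2$. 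Choosing $\tilde\epsilon = \epsilon$ and $C = C_M^2/(4\epsilon)$ yields the stated inequality, with $C$ depending on $\epsilon$ (and the Butcher tableau) but not on $\tau$, $\bu$, or $\bv$.
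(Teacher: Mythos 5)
Your proposal is correct and follows essentially the same route as the paper: Parseval on a circle of radius close to $1$, rewriting the integrand via the symbols $\tau\Delta(\zeta)^{-1}$ and $\Delta(\zeta)/\tau$ to isolate the matrix factor $\Delta(\bar\zeta)^T\mathscr B\,\Delta(\zeta)^{-1}$, cancelling its pole at $\zeta=1$ through the Sherman--Morrison form of $\Delta$ and the stiff-accuracy identity $\mathscr A^{-T}\b=\be_m$, and finishing with Young's inequality and Parseval again. Your choice of contour radius $\sqrt\rho$ (rather than the paper's $\rho$) and your explicit closed form for the cancelled term $\frac{\zeta(1-\bar\zeta)}{1-\zeta}\be_m\b^T$ are, if anything, slightly more careful renderings of the same argument.
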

\begin{proof}
	
\brev Let $\mathbb S_{\rho}$ denote the complex circle which is centered around the origin with radius $\rho<1$, .\erev

We start by applying Parseval's theorem to the left-hand side of the stated bound, which yields for all positive $\widetilde\epsilon$
\begin{align*}
\sum_{n=0}^\infty \rho^n \left(\bu^n,\bv^n\right)_{\b} 
&=
 \int_{\mathbb S_{\rho}}
 \left(\widehat \bu,\widehat \bv\right)_{\b} \,\mathrm d \zeta
 \\ &=
\int_{\mathbb S_{\rho}}
 \left(\overline{\Delta(\zeta)^{-1}\widehat \bu }\right)^T  \left(\Delta(\overline{\zeta})^T\mathscr B \Delta(\zeta)^{-1}\Delta(\zeta)\widehat \bv\right) \,\mathrm d \zeta
 \\ &\le 
\int_{\mathbb S_{\rho}}
  \abs{\Delta(\overline{\zeta})^T\mathscr B \Delta(\zeta)^{-1}}\left(
  \dfrac{\widetilde{\epsilon}}{2}\abs{\tau \Delta(\zeta)^{-1}\widehat \bu }^2+ \dfrac{1}{2\widetilde{\epsilon}}\abs{\dfrac{\Delta(\zeta)}{\tau}\widehat \bv}^2\right) \,\mathrm d \zeta,
\end{align*}
where the last inequality holds for all ${\widetilde \epsilon>0}$.
The matrix in the additional factor consisting of the matrix norm is bounded by applying the Sherman-Morrison formula and the triangle inequality, which yields
\begin{align*}
\abs{\Delta(\overline{\zeta})^T\mathscr B \Delta(\zeta)^{-1}}
&= 
\abs{\left( \bI_m- \overline{\zeta} \,\be_m \,\bone^T\right)\mathscr A^{-T}\mathscr B \left(\mathscr{A}+\frac\zeta{1-\zeta}\bone \b^T\right)}
\\ &\le
(1+m^{1/2})\abs{\mathscr A^{-T}\mathscr B\mathscr A}+ \dfrac{\abs{\left(\bI_m - \overline{\zeta} \,\be_m \,\bone^T \right)\mathscr A^{-T}\b \b^T }}{\abs{1-\zeta}}.
\end{align*}
The first summand is independent of $\zeta$ and bounded. To estimate the second summand, we exploit $\c_m=1$ and consequently ${\mathscr A^{-T}\b = \be_m}$, which implies 
\begin{align*}
	\dfrac{\abs{\left(\bI_m - \overline{\zeta} \,\be_m \,\bone^T \right)\mathscr A^{-T}\b \b^T }}{\abs{1-\zeta}}
	=
	 	\dfrac{\abs{\left(\be_m - \overline{\zeta} \,\be_m \,\bone^T \be_m\right) \b^T }}{\abs{1-\zeta}}
	&=
	\dfrac{\abs{\left(\be_m - \overline{\zeta} \,\be_m \right) \b^T }}{\abs{1-\zeta}}
	=
\abs{\be_m \b^T }.
\end{align*}
The given statement is finally deduced by applying the estimate in the inequality above to obtain a constant $C$ only depending on $m$ and $\epsilon$, such that
\begin{align*}
	\sum_{n=0}^\infty \rho^n \left(\bu^n,\bv^n\right)_{\b} 
	&\le 
   \int_{\mathbb S_\rho}
  \epsilon\abs{\tau \Delta(\zeta)^{-1}\widehat \bu }^2+ C\abs{\dfrac{\Delta(\zeta)}{\tau}\widehat \bv}^2 \mathrm d \theta
  \\&=
\sum_{n=0}^\infty \epsilon \rho^n  \abs{\left((\pt^{\tau})^{-1} \bu\right)_n}^2+C\rho^n\abs{(\pt^{\tau}\bv)_n}^2.
\end{align*}

\end{proof}
\brev
\begin{remark}\label{rem:other-RK-partial-integration}
	In view of Proposition~\ref{th:RK-CQ}, we use the convolution quadrature method based on the $m$-stage Radau IIA as the time discretization throughout the paper. The previous lemma generalizes to the more general class of stiffly accurate A-stable Runge--Kutta methods. For example, this includes the Lobatto IIIC method, which has stage order $m-1$ and full classical order $2m-2$ (details are found in \cite{HairerWannerII}). 
\end{remark}
\erev

The convolution quadrature scheme based on RaudauIIA- methods preserves central properties of the temporal operator $L(\pt)$, which is of particular importance to establish a discrete coercivity property, as described in the dedicated paper \cite[Theorem 3.1]{BanjaiLubich2019}.
The following Lemma restates this result for the standard discrete differential operator $\pt^\tau$, which is sufficient for the subsequent error analysis.
\begin{lemma}\label{lem:time-discrete-coercivity}
	\brev Let $V$ now be a Hilbert space with the norm $\norm{\cdot}_V$ and further let $\left\langle\cdot,\cdot\right\rangle$ denote the corresponding inner product, which is extended to $V^m$ with the weight matrix $\mathscr B$.
	\erev The convolution quadrature discretization based on the two-stage Radau IIA method fulfills, for arbitrary $\widetilde N \in\mathbb N$, the estimate
	\begin{align*}
	\tau \sum_{n=0}^N e^{-2  n \tau/T}\left\langle f_n
	,(\pt^\tau f)_n 
	\right\rangle\ge \dfrac{\tau}{2T}
	\sum_{n=0}^N e^{-2  n \tau / T} \norm{f_n}_V^2,
	\end{align*}
	for every sequence $(f_n)_{n\ge 0}$ in $V$. Furthermore, for $m>2$ stages the left-hand side remains positive, namely
	\begin{align*}
	 \sum_{n=0}^N e^{-2  n \tau/T}\left\langle f_n
	,(\pt^\tau f)_n 
	\right\rangle\ge 0.
	\end{align*}
\end{lemma}

 \subsection{Auxiliary result: Time-discrete transmission problem}

 The following Lemma describes a discrete variant of the continuous transmission problem \eqref{eq:transmis-1}--\eqref{eq:transmis-2} and relates an arbitrary sequence in the trace space $\mXG^2$ with the solution to a corresponding transmission problem.
 \begin{lemma}\label{lem:transmis-discrete}
 \brev	Let $\bvarphi = (\bvarphi_n)_{n\ge 0}$ and $\bpsi = (\bpsi_n)_{n\ge 0}$ denote sequences with elements in $ \mXG^m$. We define fields $(\E^\tau,\H^\tau) = \left(\E_n^\tau,\H_n^\tau\right)_{n\ge 0}$, which are sequences in the space $\H(\curl,\mathbb R^3\setminus \Gamma)^m$, by the discrete representation formulas via \erev
 	\begin{align}
 	\E^\tau & = - \mathbfcal{S}(\pt^\tau)\bvarphi+\mathbfcal D(\pt^\tau)\bpsi,\label{eq:discrete-repr-E} \\
 	\H^\tau & = - \mathbfcal D(\pt^\tau)  \bvarphi - \mathbfcal S(\pt^\tau)  \bpsi . \label{eq:discrete-repr-H}
 	\end{align}
 	These fields are exact solutions to the following discrete transmission problem:
 	\begin{alignat}{2}
 	\,\pt^\tau \E^\tau - \curl \H^\tau &= 0 \quad &&\text{in} \quad\mathbb R^3 \setminus \Gamma, \label{MW1-tau-R3-Gamma}\\ 
 	\,\pt^\tau \H^\tau + \curl \E^\tau &=0 \quad &&\text{in} \quad\mathbb R^3 \setminus \Gamma,\label{MW2-tau-R3-Gamma}\\
 	 \jmp{\gamma_T}\H^\tau&= \bvarphi,&& \label{discrete-transmis-varphi}\\
 	 - \jmp{\gamma_T}\E^\tau&=\bpsi. &&\label{discrete-transmis-psi}
 	\end{alignat}
 	
 \end{lemma}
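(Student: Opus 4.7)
The plan is to lift the lemma from the well-known properties of the time-harmonic potentials by invoking the operational calculus of Runge--Kutta convolution quadrature. Specifically, the potentials $\mathbfcal{S}(s)$ and $\mathbfcal{D}(s)$ satisfy, for every $\Re s>0$, the operator identities
\begin{equation*}
\curl\,\mathbfcal{D}(s) = s\,\mathbfcal{S}(s), \qquad \curl\,\mathbfcal{S}(s) = -s\,\mathbfcal{D}(s),
\end{equation*}
as a restatement of the fact that the time-harmonic representation formulas \eqref{eq:time-harmonic-kirchhoff-E}--\eqref{eq:time-harmonic-kirchhoff-H} produce solutions of \eqref{TH-MW1}--\eqref{TH-MW2} for arbitrary boundary densities in $\mXG$. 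Likewise, the classical jump relations for the electromagnetic potentials give the $s$-independent identities
\begin{equation*}
\jmp{\gamma_T}\,\mathbfcal{S}(s) = 0, \qquad \jmp{\gamma_T}\,\mathbfcal{D}(s) = -\bI,
\end{equation*}
which are consistent with the reduction carried out in \eqref{eq:Bimp-reduction}.

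The key observation is that all four identities are algebraic in $s$ and hold as operators acting on $\mXG$, with the spatial operators $\curl$ and $\jmp{\gamma_T}$ acting only on the physical variable and hence commuting with the matrix-valued substitution $s\mapsto\Delta(\zeta)/\tau$ that defines the convolution quadrature weights via the expansion $L(\Delta(\zeta)/\tau)=\sum_n\bW_n(L)\,\zeta^n$. Applied to the matrix argument $\Delta(\zeta)/\tau$ and then read off as sequences, the four identities above translate without loss into
\begin{equation*}
\curl\,\mathbfcal{D}(\pt^\tau) = \pt^\tau\,\mathbfcal{S}(\pt^\tau), \qquad \curl\,\mathbfcal{S}(\pt^\tau) = -\pt^\tau\,\mathbfcal{D}(\pt^\tau),
\end{equation*}
\begin{equation*}
\jmp{\gamma_T}\,\mathbfcal{S}(\pt^\tau) = 0, \qquad \jmp{\gamma_T}\,\mathbfcal{D}(\pt^\tau) = -\bI,
\end{equation*}
as identities between discrete operators acting on $\mXG^m$-valued sequences.

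Given these discrete operator identities, the verification of the lemma is a direct substitution. For \eqref{MW1-tau-R3-Gamma} I compute
\begin{align*}
\pt^\tau \E^\tau - \curl \H^\tau
&= -\pt^\tau\mathbfcal{S}(\pt^\tau)\bvarphi + \pt^\tau\mathbfcal{D}(\pt^\tau)\bpsi + \curl\mathbfcal{D}(\pt^\tau)\bvarphi + \curl\mathbfcal{S}(\pt^\tau)\bpsi \\
&= \bigl(-\pt^\tau\mathbfcal{S}(\pt^\tau) + \curl\mathbfcal{D}(\pt^\tau)\bigr)\bvarphi + \bigl(\pt^\tau\mathbfcal{D}(\pt^\tau) + \curl\mathbfcal{S}(\pt^\tau)\bigr)\bpsi = 0,
\end{align*}
and the analogous manipulation yields \eqref{MW2-tau-R3-Gamma}. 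The jump conditions \eqref{discrete-transmis-varphi}--\eqref{discrete-transmis-psi} follow identically from applying $\jmp{\gamma_T}$ to \eqref{eq:discrete-repr-E}--\eqref{eq:discrete-repr-H} and inserting the discrete jump identities.

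No serious obstacle is expected: the entire proof is a transcription from the Laplace domain to the time-discrete regime, made possible by the matrix-valued functional calculus that underlies Runge--Kutta convolution quadrature. The only point requiring minor care is the justification that the substitution $s\mapsto\Delta(\zeta)/\tau$ commutes with $\curl$ and $\jmp{\gamma_T}$, which is immediate because those operators act pointwise in the stage index and leave the Runge--Kutta algebraic structure untouched.
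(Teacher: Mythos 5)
Your proof is correct and follows essentially the same route as the paper: both transfer the time-harmonic facts (that the potentials produce Maxwell solutions, and their jump relations) to the discrete setting via the substitution $s\mapsto\Delta(\zeta)/\tau$ and read off the resulting identity coefficient-by-coefficient in $\zeta$. The only cosmetic difference is that you make the underlying operator identities $\curl\mathbfcal{D}(s)=s\,\mathbfcal{S}(s)$, $\curl\mathbfcal{S}(s)=-s\,\mathbfcal{D}(s)$ and the jump relations explicit, whereas the paper phrases the same step as the generating functions solving the time-harmonic transmission problem followed by coefficient comparison.
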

\begin{proof}
The generating function of the representation formula yields formulas for the generating functions $ \wE^\tau (\zeta)$ and $\wH^\tau(\zeta)$, which are of the form
\begin{align*}
\wE^\tau (\zeta) = -\mathbfcal S\left(\dfrac{\Delta(\zeta)}{\tau}\right) \widehat \bvarphi (\zeta )+ \mathbfcal D\left(\dfrac{\Delta(\zeta)}{\tau}\right)\widehat \bpsi (\zeta ).
\end{align*}
The construction and jump relations of the time-harmonic potential operators $\mathbfcal S(s)$ and $\mathbfcal D(s)$ yield the following time-harmonic transmission problem for the generating functions
\begin{alignat*}{2}
\dfrac{\Delta(\zeta)}{\tau} \widehat\E^\tau(\zeta) - \curl \widehat\H^\tau(\zeta) &= 0 \quad &&\text{in} \quad\mathbb R^3\setminus \Gamma,\\ 
\dfrac{\Delta(\zeta)}{\tau} \widehat\H^\tau(\zeta) + \curl \widehat\E^\tau(\zeta) &=0 \quad &&\text{in} \quad\mathbb R^3\setminus \Gamma,\\
\jmp{\gamma_T}\widehat\H^\tau(\zeta)&=\widehat \bvarphi(\zeta) && \text{on} \quad \Gamma,\\
-\jmp{\gamma_T}\widehat\E^\tau(\zeta)&=\widehat \bpsi(\zeta)\quad  && \text{on} \quad \Gamma.
\end{alignat*}
Comparing the coefficients now yields the result as stated.
\end{proof}
The well posedness result of Proposition~\ref{proposition: well-posedness-time-dependent} was enabled through the \brev jump relations \erev of $\Cald(\pt)$, which is a natural property to \brev carry over \erev to the time-discrete Calderón operator $\Cald(\pt^\tau)$. The following Lemma provides such a statement and proves it, again by making use of generating functions and time-harmonic identities.
\begin{lemma}\label{lem: discrete-Calderon-properties}
 Let $\Cald(\pt^\tau)$ denote the convolution quadrature approximation of the time-dependent Calderón operator $\Cald(\pt)$, based on the Radau IIA method with $m$-stages. \brev Let furthermore $\bvarphi = (\bvarphi_n)_{n\ge 0}$ and ${\bpsi = (\bpsi_n)_{n\ge 0}}$ be sequences with elements in $\mXG^m$, with associated discrete fields $\E^\tau = \left(\E^\tau_n\right)_{n\ge 0}$ and $\H^\tau = \left(\H^\tau_n\right)_{n\ge 0}$ with support on $\mathbb R^3\setminus \Gamma$, defined via \eqref{eq:discrete-repr-E}--\eqref{eq:discrete-repr-H}. The Calderón operator then fulfills the jump relations
 \begin{align*}
 	\Cald(\pt^\tau)
 	\begin{pmatrix}
 	\bvarphi \\ \bpsi
 	\end{pmatrix} = 	\begin{pmatrix}
 	\avg{\gamma_T\E^\tau} \\	\avg{\gamma_T\H^\tau}
 	\end{pmatrix}.
 \end{align*}
 \brev Moreover, we have the following identity for all $n\in\mathbb N$\erev
 \begin{align}\label{eq:discrete-Cald-coerc}
 	\left[ 	\begin{pmatrix}
 	\bvarphi_n \\ \bpsi_n
 	\end{pmatrix}
 	,
 	\left(	\Cald(\pt^\tau)
 		\begin{pmatrix}
 		\bvarphi \\ \bpsi
 		\end{pmatrix}\right)_n
 	\right]_\Gamma = 
 	\int_{\mathbb R^3\setminus \Gamma} \E_n^\tau \cdot (\pt^\tau\E^\tau)_n + (\pt^\tau \H^\tau)_n \cdot \H_n^\tau \,\mathrm d\bx.
 \end{align}

\end{lemma}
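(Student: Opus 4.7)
The plan is to lift both statements from the time-harmonic (Laplace) setting to the discrete convolution level through the same generating-function device used in the proof of Lemma~\ref{lem:transmis-discrete}, and then to reduce the integral identity to a stagewise application of Green's formula on $\mathbb R^3\setminus\Gamma$.

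For the jump condition, I would operate on the contour $\mathbb S_\rho$ and compute generating functions in $\zeta$ of both sides. On the one hand, the generating function of the discrete convolution yields $\Cald\bigl(\Delta(\zeta)/\tau\bigr)\bigl(\widehat\bvarphi(\zeta),\widehat\bpsi(\zeta)\bigr)^T$. On the other hand, the fields $\wE^\tau(\zeta)$ and $\wH^\tau(\zeta)$ produced by the time-harmonic representation formulas at $s=\Delta(\zeta)/\tau$ have already been identified in the proof of Lemma~\ref{lem:transmis-discrete} as solutions of the corresponding time-harmonic transmission problem and hence satisfy the time-harmonic Calder\'on jump identity~\eqref{eq:calderon-jump}. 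A coefficient comparison in $\zeta^n$ then produces the claimed discrete jump identity.

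For the integral identity \eqref{eq:discrete-Cald-coerc}, I would first insert the jump condition just established to rewrite the left-hand side, in the $\b$-weighted stage extension, as $[\bvarphi_n,\avg{\gamma_T\E^\tau_n}]_\Gamma+[\bpsi_n,\avg{\gamma_T\H^\tau_n}]_\Gamma$. Substituting $\bvarphi_n=\jmp{\gamma_T\H^\tau_n}$ and $\bpsi_n=-\jmp{\gamma_T\E^\tau_n}$ from Lemma~\ref{lem:transmis-discrete} and invoking the antisymmetry of $[\cdot,\cdot]_\Gamma$ on real-valued tangential fields, this turns into $[\jmp{\gamma_T\H^\tau_n},\avg{\gamma_T\E^\tau_n}]_\Gamma+[\avg{\gamma_T\H^\tau_n},\jmp{\gamma_T\E^\tau_n}]_\Gamma$. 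The combined Green's identity on $\mathbb R^3\setminus\Gamma$,
\[
\int_{\mathbb R^3\setminus\Gamma}\curl\bu\cdot\bv-\bu\cdot\curl\bv\,\mathrm d\bx=[\jmp{\gamma_T\bu},\avg{\gamma_T\bv}]_\Gamma+[\avg{\gamma_T\bu},\jmp{\gamma_T\bv}]_\Gamma,
\]
obtained by summing \eqref{eq: Green} on $\Omega^+$ and $\Omega^-$ with their respective outward normals, applied stagewise with $\bu=\H^\tau_n$ and $\bv=\E^\tau_n$, rewrites the boundary expression as the volume integral of $\curl\H^\tau_n\cdot\E^\tau_n-\H^\tau_n\cdot\curl\E^\tau_n$. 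The stagewise discrete Maxwell equations \eqref{MW1-tau-R3-Gamma}--\eqref{MW2-tau-R3-Gamma} finally substitute $\curl\H^\tau_n=(\pt^\tau\E^\tau)_n$ and $\curl\E^\tau_n=-(\pt^\tau\H^\tau)_n$, producing the stated integrand after $\b$-weighted summation over the stages.

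The only genuinely non-routine step is the careful bookkeeping of signs, both in the combined Green's identity (stemming from the two opposite outward normals on $\Omega^\pm$) and in the antisymmetry of the skew-hermitian pairing that must compensate the minus sign in $\bpsi=-\jmp{\gamma_T\E^\tau}$. Once those signs are properly aligned, both parts of the lemma follow from coefficient comparison of generating functions together with the straightforward $\b$-weighted aggregation across Runge--Kutta stages.
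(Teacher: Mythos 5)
Your proposal is correct and follows essentially the same route as the paper: the jump condition is obtained by passing to generating functions, invoking the time-harmonic Calder\'on jump identity at $s=\Delta(\zeta)/\tau$, and comparing coefficients, while the integral identity follows by inserting the jump relations $\bvarphi_n=\jmp{\gamma_T}\H^\tau_n$, $\bpsi_n=-\jmp{\gamma_T}\E^\tau_n$, reducing the pairing to $[\gamma_T^+\H^\tau_n,\gamma_T^+\E^\tau_n]_\Gamma-[\gamma_T^-\H^\tau_n,\gamma_T^-\E^\tau_n]_\Gamma$ (which is what your combined jump--average Green identity expands to), and applying Green's formula together with the discrete Maxwell equations. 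The sign bookkeeping you flag works out exactly as in the paper's proof, so no changes are needed.
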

\begin{proof}
The generating function of the sequence on the left-hand side gives the stated \brev jump relations \erev by employing the time-harmonic \brev jump relations \erev of the Calderón operator, which gives
\begin{align*}
	\widehat{\left(\Cald(\pt^\tau)\begin{pmatrix}
	\bvarphi \\ \bpsi
	\end{pmatrix}\right)}
	&=\Cald\left(\dfrac{\Delta(\zeta)}{\tau}\right)
	\begin{pmatrix}
\bwvarphi(\zeta)\\  \bwpsi (\zeta)
	\end{pmatrix}
	 =
	\begin{pmatrix}
	\avg{\gamma_T\widehat\E^\tau} \\ \avg{\gamma_T\widehat\H^\tau}
	\end{pmatrix}.
\end{align*}	
 Inserting the discrete \brev jump relations \erev into the left-hand side of \eqref{eq:discrete-Cald-coerc} and applying the relations of the boundary densities and their respective fields, as given by \eqref{discrete-transmis-varphi}--\eqref{discrete-transmis-psi}, yields
\begin{align*}
 	\left[ 	\begin{pmatrix}
 	\bvarphi_n \\ \bpsi_n
 	\end{pmatrix}
 	,
 	\left(	\Cald(\pt^\tau)
 	\begin{pmatrix}
 	\bvarphi^\tau \\ \bpsi^\tau
 	\end{pmatrix}\right)_n
 	\right]_\Gamma
 	&=
  	\left[ 	\begin{pmatrix}
  \jmp{\gamma_T} \H^\tau_n \\ -\jmp{\gamma_T} \E^\tau_n
  	\end{pmatrix}
  	,
  	\begin{pmatrix}
  	\avg{\gamma_T}\E_n^\tau \\	\avg{\gamma_T}\H_n^\tau
  	\end{pmatrix}
  	\right]_\Gamma	
  \\ &= 
    [\gamma_T^+\H_n^\tau,\gamma_T^+ \E_n^\tau]_\Gamma - [\gamma_T^-\H_n^\tau,\gamma_T^- \E_n^\tau]_\Gamma.
\end{align*}
Finally, applying Green's formula \eqref{eq: Green} and inserting the discretized Maxwell's equations \eqref{MW1-tau-R3-Gamma}--\eqref{MW2-tau-R3-Gamma} completes the proof by
\begin{align*}
	    [\gamma_T^+\H_n^\tau,\gamma_T^+ \E_n^\tau]_\Gamma - [\gamma_T^-\H_n^\tau,\gamma_T^- \E_n^\tau]_\Gamma 
	    &= \int_{\mathbb R^3\setminus \Gamma} \E_n^\tau \cdot \curl \H_n^\tau-\curl \E_n^\tau \cdot \H_n^\tau \,\mathrm d\bx
	    \\ &= 
	    \int_{\mathbb R^3\setminus \Gamma}  \E_n^\tau \cdot (\pt^\tau\E^\tau)_n + (\pt^\tau \H^\tau)_n \cdot \H_n^\tau \,\mathrm d\bx.
\end{align*}
\end{proof}
\begin{remark}\label{rem:shift-Cald-pos}
 Of particular importance is the combination of this result with Lemma~\ref{lem:time-discrete-coercivity}, which implies a coercivity for $m=2$. For $m>2$ no such result is known, but by introducing a positive shift $\sigma$ and setting $\widetilde\Cald(s) = \Cald(s+\sigma)$, as it appears in the shifted boundary integral equation \eqref{bie-weak-shifted}, one obtains 
 \begin{align*}
 \left[ 	\begin{pmatrix}
 \bvarphi_n \\ \bpsi_n
 \end{pmatrix}
 ,
 \left(	\widetilde \Cald(\pt^\tau)
 \begin{pmatrix}
 \bvarphi \\ \bpsi
 \end{pmatrix}\right)_n
 \right]_\Gamma = 
 \int_{\mathbb R^3\setminus \Gamma} & \E_n^\tau \cdot (\pt^\tau\E^\tau)_n +\sigma\abs{\E^\tau_n}^2  
 \\ +& (\pt^\tau \H^\tau)_n \cdot \H_n^\tau +\sigma\abs{\H^\tau_n}^2 \,\mathrm d\bx,
 \end{align*}
 thus recovering the crucial positivity of the discrete Calderón operator (in combination with the second part of Lemma~\ref{lem:time-discrete-coercivity}). Numerical \brev  experiments indicate that such a shift is not necessary (c.f. Figure~\ref{fig:shifts}), \erev  however the subsequent error analysis depends on such a positivity result. 
 
\end{remark}
\subsection{Convolution quadrature for the nonlinear boundary integral equation}

Discretizing the temporal Calderón operator in the boundary integral equation \eqref{bie-weak-nonlinear} with Runge--Kutta based convolution quadrature yields the following semi-discrete scheme.

\bigskip\noindent
{\bf Time-discrete boundary integral equation:}
{\it Find ${(\bvarphi^n, \bpsi^n)=(\bvarphi^n_i, \bpsi^n_i)_{i=1}^m \in \mVG^m\times\mXG^m}$, such that for all ${({\boldsymb{\eta}},\bxi) \in \mVG\times\mXG}$ and ${n\le N}$ it holds}
\begin{align}
\label{cq-bie-weak}
\left[
\begin{pmatrix}
\boldsymb{\eta} \\ \bxi
\end{pmatrix},
\left(\Caldimp(\pt^\tau)
\begin{pmatrix}
\bvarphi^\tau \\
\bpsi^\tau
\end{pmatrix}\right)_n
\right]_\Gamma+
\left( \boldsymb{\eta} , \a\left(\bvarphi^\tau_n+\gamma_T \H_n^\text{inc} \right) \right)_\Gamma = [\gamma_T \E_n^\text{inc},\boldsymb{\eta}]_\Gamma.
\end{align}
 The numerical solution can then be evaluated by 
\begin{align}
\label{eq:time-dependent-representation-E-tau}
\E^\tau & = - \mathbfcal{S}(\pt^\tau)\bvarphi^\tau+\mathbfcal{D}(\pt^\tau)\bpsi^\tau, \\
\label{eq:time-dependent-representation-H-tau}
\H^\tau & = - \mathbfcal{D}(\pt^\tau) \bvarphi^\tau - \mathbfcal{S}(\pt^\tau)  \bpsi^\tau . 
\end{align}

To limit the extent of the present paper we abstain from giving a convergence analysis of the semi-discretization and directly move on to the space discretization. The proof of error bounds for the full discretization from Theorem~\ref{them:semi-discr-time-experimental} is readily reduced to the semi-discretization and predicts convergence rates of order $m$ in the norms that are stated there.

\section{Full discretization}
We turn our attention to the development and analysis of fully discrete schemes. To achieve this, we start by an introduction of the Galerkin space discretization, suitable for \brev the variational formulation \eqref{cq-bie-weak}. \erev
\subsection{Boundary element method}
Restricting the time-discrete boundary integral equation \eqref{cq-bie-weak} to finite dimensional subspaces $\V_h\subset \mVG$ and $\X_h \subset \mXG$, corresponding to piecewise polynomials defined on a family of triangulations with mesh width $h$, yields the full discretization.
As boundary element spaces, we employ Raviart--Thomas elements of order $k\ge 0$ (c.f. \cite{RT77}) for the discretization of $\V_h$ and $\X_h$ respectively, which are defined on the unit triangle
$\widehat{K}$ as reference element by
\begin{align*}
	\text{RT}_k(\widehat{K}) = \left\{\bx\mapsto \p_1(\bx)+p_2(\bx) \bx \,:\  \p_1\in P_k(\widehat{K})^2\!,\  p_2\in P_k(\widehat{K})\right\},
\end{align*}
where $P_k(\widehat{K})$ is the polynomial space of degree $k$ on $\widehat{K}$.
This definition naturally extends to arbitrary grids by piecewise pull-back to the reference element.

The following approximation result holds for Raviart-Thomas elements and are obtained in this form from the results collected in Lemma 14 and Theorem 15 of \cite{BH03}; see also the original references \cite[Section~III.3.3]{BreF91} and \cite{BC03}.
Here, we use the same notation $\H^p_\times(\Gamma)=\gamma_T \H^{p+1/2}(\Omega)$ for boundary data of higher regularity, as in \cite{BH03}.

\begin{lemma} \label{lem:RT}
	Let $\X_h=\V_h$ be the $k$-th order Raviart--Thomas boundary element space on $\Gamma$. For every $\bxi\in\H^{k+1}_\times(\Gamma)$  the best-approximation error is bounded by
	\begin{align*}
		&\inf_{\bxi_h\in \X_h} \| \bxi_h - \bxi \|_{\mXG}+
h^{1/2} \| \bxi_h - \bxi \|_{\bL^2(\Gamma)} \le C h^{k+3/2} \|\bxi \|_{\H^{k+1}_\times(\Gamma)}.
	\end{align*}
\end{lemma}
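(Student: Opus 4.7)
The plan is to establish the bound via the canonical Raviart--Thomas interpolation operator $I_h \colon \H^{k+1}_\times(\Gamma)\to \X_h$ constructed from the usual moment degrees of freedom on the reference element and transported to the surface mesh by the contravariant (Piola) pullback. With this interpolant in hand, the infimum on the left is bounded by the interpolation error $\|\bxi - I_h\bxi\|_{\mXG} + h^{1/2}\|\bxi - I_h\bxi\|_{\bL^2(\Gamma)}$, and the rest of the argument is to estimate these two terms separately.

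First I would handle the $\bL^2(\Gamma)$ term. On each triangle $K$ of the mesh, a standard Bramble--Hilbert argument on the reference element $\widehat K$, combined with the scaling properties of the Piola transform (which preserves $\mathrm{RT}_k$ and commutes with $\mathrm{div}_\Gamma$ up to a Jacobian factor), yields the local estimate
\begin{equation*}
\|\bxi - I_h\bxi\|_{\bL^2(K)} \le C\, h^{k+1}\, |\bxi|_{\H^{k+1}(K)}.
\end{equation*}
Summing over $K$ and using shape-regularity gives $\|\bxi - I_h\bxi\|_{\bL^2(\Gamma)} \le C h^{k+1}\|\bxi\|_{\H^{k+1}_\times(\Gamma)}$, which supplies the $h^{k+3/2}$ contribution from the second term on the left-hand side.

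For the $\mXG$ part, I would exploit that $\|\cdot\|_{\mXG}$ is equivalent to $\|\cdot\|_{\H^{-1/2}(\Gamma)} + \|\mathrm{div}_\Gamma\,\cdot\|_{H^{-1/2}(\Gamma)}$. The key structural fact is the commuting diagram property of the Raviart--Thomas interpolant on the surface, namely $\mathrm{div}_\Gamma I_h\bxi = Q_h\,\mathrm{div}_\Gamma \bxi$, where $Q_h$ denotes the $L^2$-orthogonal projection onto the piecewise polynomial space of degree $k$. Applied to the divergence component this immediately gives
\begin{equation*}
\|\mathrm{div}_\Gamma(\bxi - I_h\bxi)\|_{H^{-1/2}(\Gamma)} = \|(\mathrm{Id}-Q_h)\mathrm{div}_\Gamma\bxi\|_{H^{-1/2}(\Gamma)} \le C h^{k+3/2}\|\mathrm{div}_\Gamma\bxi\|_{H^{k}(\Gamma)},
\end{equation*}
by a standard duality argument gaining the extra $h^{1/2}$ relative to the $L^2$ rate. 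For the tangential component, a duality argument (Aubin--Nitsche type) applied to the $L^2$ estimate on a slightly smoother test function, together with the approximation properties of $L^2$-projections, yields
\begin{equation*}
\|\bxi - I_h\bxi\|_{\H^{-1/2}(\Gamma)} \le C h^{k+3/2}\|\bxi\|_{\H^{k+1}_\times(\Gamma)}.
\end{equation*}

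Combining the three estimates and taking the infimum proves the claim. The main obstacle I expect is not the Bramble--Hilbert machinery, which is classical, but rather the careful handling of the $H^{-1/2}$-duality on a Lipschitz surface: one must ensure the duality argument is compatible with the contravariant Piola transformation and with the commuting diagram property of $I_h$ on each element, which is where the references \cite[Section~III.3.3]{BreF91} and \cite{BC03} do the delicate work.
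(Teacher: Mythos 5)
The paper does not actually prove this lemma: it is quoted from Lemma~14 and Theorem~15 of \cite{BH03} (with the original work in \cite{BreF91} and \cite{BC03}), so your proposal is a reconstruction of the cited result rather than an alternative to an argument in the paper. Your skeleton — canonical Raviart--Thomas interpolant via the Piola map, Bramble--Hilbert for the $\bL^2$ rate, the commuting-diagram property for the surface divergence, and a negative-norm duality gain — is indeed the standard one. However, two of your three estimates do not close as written. For the divergence part, the duality trick gives $\|(\mathrm{Id}-Q_h)u\|_{H^{-1/2}(\Gamma)}\le C h^{1/2}\|(\mathrm{Id}-Q_h)u\|_{L^2(\Gamma)}$, and the $L^2$-projection error onto degree-$k$ piecewise polynomials for $u=\divG\bxi\in H^{k}(\Gamma)$ is only $O(h^{k})$; your displayed bound $Ch^{k+3/2}\|\divG\bxi\|_{H^{k}(\Gamma)}$ is therefore one power of $h$ too optimistic. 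Reaching $h^{k+3/2}$ requires $\divG\bxi\in H^{k+1}(\Gamma)$, and with the paper's convention $\H^{k+1}_\times(\Gamma)=\gamma_T\H^{k+3/2}(\Omega)$ one only gets $\divG(\gamma_T\bu)=-\bnu\cdot\curl\bu\in H^{k}(\Gamma)$ generically; this is precisely why \cite{BH03} states the result in $\H^{k+1}_\times(\divG,\Gamma)$, which imposes the divergence regularity explicitly. Your argument needs that hypothesis spelled out.

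The second gap is the $\H^{-1/2}$ estimate for the field part of $\bxi-I_h\bxi$. The Aubin--Nitsche gain requires orthogonality of the error to a sufficiently rich discrete space, but $I_h$ is not an $L^2$-orthogonal projection: its interior moments are only against polynomials of degree $k-1$, and for $k=0$ (the case used in the paper's numerical experiments) there are no interior moments at all, so the pairing $(\bxi-I_h\bxi,\bv)$ cannot be reduced to $(\bxi-I_h\bxi,\bv-\Pi\bv)$ and the extra $h^{1/2}$ does not come for free. This is exactly the step where \cite{BH03} and \cite{BC03} do the nontrivial work (regular decompositions and interpolation-space arguments on the surface), and your sketch defers it to the references rather than supplying it. In short: right roadmap, but the divergence bound contains a concrete quantitative error tied to an unstated regularity assumption, and the hardest negative-norm step is asserted rather than proved.
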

The $\bL^{1+\alpha}(\Gamma)$ norm naturally arises in estimates derived from the boundary integral equation (e.g. in Proposition~\ref{prop:phi-bound-tau} and Theorem~\ref{proposition: well-posedness-time-dependent} ). On bounded domains, as is the case for $\Gamma$, this norm is estimated by the $\bL^2(\Gamma)$ norm as a consequence of the Hölder inequality with $q = \frac{2}{1+\alpha}$ and $p = \frac{2}{1-\alpha}$, which implies
\begin{align}\label{est: L1pa-to-L2}
\norm{\bu}^{1+\alpha}_{\bL^{1+\alpha}(\Gamma)} = \int_{\Gamma} \abs{\bu}^{1+\alpha} \mathrm d \bx 
\le
\norm{1}_{L^{\frac{2}{1-\alpha}}(\Gamma)} \norm{\abs{\bu}^{1+\alpha}}_{L^{\frac{2}{1+\alpha}}(\Gamma)} 
= C_{\alpha,\Gamma} \norm{\bu}_{\bL^2(\Gamma)}^{1+\alpha}.
\end{align}

\subsection{Fully discretized boundary integral equation}
The fully discrete scheme of the boundary integral equation \eqref{bie-weak-nonlinear} is now given by combining the convolution quadrature method, used to discretize the temporal Calderón operator, with the boundary element method.

\bigskip\noindent
{\bf Full discretization of the boundary integral equation:} 
{\it Find ${(\bvarphi\tauhn, \bpsi\tauhn) \in (\V_h\times \X_h)^m}$, such that for all $(\bseta_h,\bxi_h) \in \V_h\times \X_h$ and $n\le N$ the following scheme holds}
\begin{align}\label{eq:full-dis}
\left[
\begin{pmatrix}
\bseta_h \\ \bxi_h
\end{pmatrix},
\left(\Caldimp(\pt^\tau)
\begin{pmatrix}
\bvarphi^\tau_h \\
\bpsi^\tau_h
\end{pmatrix}\right)_n
\right]_\Gamma+
\left( \bseta_h , \ba\left(\bvarphi^n_h+\gamma_T \H_n^\text{inc}\right) \right)_\Gamma = [\gamma_T \E_n^\text{inc},\bseta_h]_\Gamma.
\end{align}
The remaining part of this section is devoted to the derivation of error bounds of this scheme. 

Our investigations into the errors of fully discrete solutions start with a stability result, which is desirable on its own but in particular takes a central role in the subsequent derivation of rate specific error bounds.
\subsection{Full discretization: Unconditional bounds on the numerical solution}
The following Proposition bounds the numerical solution in terms of the \brev incident \erev waves, without making any assumptions on the regularity of the exact solution.

\begin{proposition}\label{prop:phi-bound-tau}
	Consider $\bvarphi\tauhn \in \V^m_h$ for all $n\ge 0$, solution to the fully discrete scheme \eqref{eq:full-dis}, where  Radau IIA based Runge-Kutta convolution quadrature with $m$-stages in time and arbitrary boundary element spaces $\V_h$ and $\X_h$ have been employed. Then, the numerical solution is bounded by
	\begin{align*}
	&\sum_{n=0}^N	\norm{\bvarphi\tauhn}_{\bL^{1+\alpha}(\Gamma)}^{1+\alpha} 
	\le C\sum_{n=0}^N\norm{\gamma_T \E^{\text{inc}}_n}_{\bL^{\frac{1+\alpha}{\alpha}}(\Gamma)}^{\frac{1+\alpha}{\alpha}}+	\norm{\gamma_T \Hinc _n}^{1+\alpha}_{\bL^{1+\alpha}(\Gamma)},
	\end{align*}
	where the constant $C$ depends only on $\alpha$.
\end{proposition}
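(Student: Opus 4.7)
The plan is to adapt the energy argument used in the proof of Theorem~\ref{proposition: well-posedness-time-dependent}, replacing continuous Green's formula by the discrete Green's identity of Lemma~\ref{lem: discrete-Calderon-properties} and time integration by a weighted summation that exploits the discrete coercivity of Lemma~\ref{lem:time-discrete-coercivity}. Concretely, I would test the fully discrete scheme \eqref{eq:full-dis} against the numerical solution itself stage--by--stage under the $\mathscr B$--weighted pairing, i.e.\ by choosing $(\bseta_h,\bxi_h)$ equal to the stage components of $(\bvarphi\tauhn,\bpsi\tauhn)$, and then sum the resulting identities over $n \ge 0$ against the exponential weight $\rho^n = e^{-2 n\tau/T}$.

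The block correction in the definition of $\Caldimp$, see~\eqref{eq:Bimp_s}, contributes $-\tfrac{1}{2}[\bvarphi\tauhn,\bpsi\tauhn]_\Gamma - \tfrac{1}{2}[\bpsi\tauhn,\bvarphi\tauhn]_\Gamma$ after testing, which vanishes by the skew--symmetry of the anti--symmetric pairing. What remains of the Calderón contribution is then converted, via the discrete jump identity~\eqref{eq:discrete-Cald-coerc}, into the volume expression
\begin{align*}
\sum_{n \ge 0} \rho^n \int_{\mathbb R^3 \setminus \Gamma} \E_n^\tau \cdot (\pt^\tau \E^\tau)_n + (\pt^\tau \H^\tau)_n \cdot \H_n^\tau \,\mathrm d \bx,
\end{align*}
where $(\E^\tau,\H^\tau)$ are the discrete Maxwell fields produced from $(\bvarphi^\tau_h,\bpsi^\tau_h)$ via the representation formulas~\eqref{eq:time-dependent-representation-E-tau}--\eqref{eq:time-dependent-representation-H-tau}. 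Lemma~\ref{lem:time-discrete-coercivity} shows this quantity is non-negative and can simply be dropped from the left--hand side.

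The nonlinear term is treated in complete parallel to the proof of Theorem~\ref{proposition: well-posedness-time-dependent}. I would write
\begin{align*}
(\bvarphi\tauhn,\,\ba(\bvarphi\tauhn + \gamma_T\Hinc_n))_\Gamma = \norm{\bvarphi\tauhn + \gamma_T\Hinc_n}^{1+\alpha}_{\bL^{1+\alpha}(\Gamma)} - (\gamma_T\Hinc_n,\,\ba(\bvarphi\tauhn + \gamma_T\Hinc_n))_\Gamma,
\end{align*}
and control the second term via Hölder combined with the identity $\norm{\ba(\bu)}_{\bL^{\frac{1+\alpha}{\alpha}}(\Gamma)} = \norm{\bu}^{\alpha}_{\bL^{1+\alpha}(\Gamma)}$ from~\eqref{bound-a}, followed by Young's inequality with dual exponents $1+\alpha$ and $\frac{1+\alpha}{\alpha}$; the data term $[\gamma_T\Einc_n,\bvarphi\tauhn]_\Gamma$ on the right--hand side is bounded analogously. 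A small parameter in Young's inequality, together with a triangle inequality expressing $\norm{\bvarphi\tauhn}_{\bL^{1+\alpha}(\Gamma)}$ in terms of $\norm{\bvarphi\tauhn + \gamma_T\Hinc_n}_{\bL^{1+\alpha}(\Gamma)}$ and $\norm{\gamma_T\Hinc_n}_{\bL^{1+\alpha}(\Gamma)}$, then allows absorbing every $\bvarphi\tauhn$--contribution on the right into the dominant coercive quantity on the left.

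The main obstacle is that strict positivity of the discrete Calderón operator from the first part of Lemma~\ref{lem:time-discrete-coercivity} is established only for the two--stage Radau IIA method. For $m>2$ stages I would circumvent this by carrying out the entire argument in the shifted framework~\eqref{bie-weak-shifted} and invoking Remark~\ref{rem:shift-Cald-pos}: introducing a fixed shift $\sigma>0$ restores strict positivity of the shifted Calderón operator $\widetilde \Cald(\pt^\tau)$ uniformly in $m$, at the price of replacing $\bvarphi$ and $\bpsi$ with their exponentially weighted counterparts. Since $e^{-\sigma t}$ is uniformly bounded above and below on $[0,T]$, the bound obtained for the shifted densities translates back to one for $\bvarphi\tauhn$, and the outer weight $\rho^n\ge e^{-2}$ for $n\le N$ can likewise be absorbed into the constant, yielding the stated inequality.
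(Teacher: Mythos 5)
Your proposal is correct and follows essentially the same route as the paper's proof: test with the numerical solution, sum with an exponential weight, drop the nonnegative discrete Calder\'on contribution via Lemma~\ref{lem: discrete-Calderon-properties} and Lemma~\ref{lem:time-discrete-coercivity}, and handle the nonlinear and data terms with the identity \eqref{bound-a}, H\"older and Young. Your extra detour through the shifted formulation for $m>2$ is unnecessary here, since the second statement of Lemma~\ref{lem:time-discrete-coercivity} already supplies the required non-negativity (as opposed to coercivity) for all $m$, which is all this stability bound needs.
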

\begin{proof}
	\brev We start by testing the discretized scheme with the numerical solution $(\bvarphi\tauh,\bpsi\tauh)$, which yields after summation\erev
	\begin{align}\label{eq:time-discrete-scheme-tested}
	\begin{split}
	&\sum_{n=0}^N e^{-\sigma n \tau} \left(\left[
	\begin{pmatrix}	\bvarphi\tauhn \\ \bpsi\tauhn \end{pmatrix},
	\left(\Caldimp(\pt^\tau)
	\begin{pmatrix}
	\bvarphi\tauh \\
	\bpsi\tauh
	\end{pmatrix}\right)_n
	\right]_\Gamma+ 
	\left( \bvarphi\tauhn , \ba\left(\bvarphi\tauhn+\gamma_T \H^\text{inc}_n \right) \right)_\Gamma \right)
	 \\ & \quad \quad= \sum_{n=0}^N e^{-\sigma n \tau}[\gamma_T \E^\text{inc}_n,\bvarphi^\tau_n]_\Gamma.
	\end{split}
	\end{align}
	The positivity of the time-discrete operator $\Caldimp(\pt^\tau)$, seen as a direct consequence of  Lemma~\ref{lem:time-discrete-coercivity} applied to the right-hand side of \eqref{eq:discrete-Cald-coerc} \brev where the numerical solution $(\bvarphi\tauhn,\bpsi\tauhn)$ is extended by zero for $n>N$, implies
	\begin{align*}
	\sum_{n=0}^N e^{-\sigma n \tau}\left[
	\begin{pmatrix}	\bvarphi\tauhn \\ \bpsi\tauhn \end{pmatrix},
	\left(\Caldimp(\pt^\tau)
	\begin{pmatrix}
	\bvarphi\tauh \\
	\bpsi\tauh
	\end{pmatrix}
	\right)_n
	\right]_\Gamma \ge 0,
	\end{align*}
. \erev
	The corresponding term in the expression \eqref{eq:time-discrete-scheme-tested} is therefore neglected, to bound the remaining term on the left-hand side by the right-hand side.  Rewriting the summands of the second term yields 
	\begin{align*}
	\left( \bvarphi\tauhn , \ba\left(\bvarphi\tauhn+\gamma_T \H^\text{inc}_n \right) \right)_\Gamma 
	=&
	\norm{\bvarphi\tauhn+\gamma_T \Hinc_n }_{\bL^{1+\alpha}(\Gamma)}^{1+\alpha} 
	-\left( \gamma_T \Hinc_n , \ba\left(\bvarphi\tauhn+\gamma_T \H^\text{inc}_n \right) \right)_\Gamma.
	\end{align*}
	Rearranging gives a constant $C>0$, such that the following intermediate inequality holds
	\begin{align*}
	\sum_{n=0}^N e^{-\sigma n \tau}
	\norm{\bvarphi\tauhn+\gamma_T \H^\text{inc}_n}_{L^{1+\alpha}(\Gamma)}^{1+\alpha}
	\le& C\sum_{n=0}^N e^{-\sigma n \tau}
	[\gamma_T \E^\text{inc}_n,\bvarphi\tauhn]_\Gamma
	\\ &+ C\sum_{n=0}^N e^{-\sigma n \tau} \left( \gamma_T \Hinc_n, \ba\left(\bvarphi\tauhn+\gamma_T \H^\text{inc}_n \right) \right)_\Gamma  .
	\end{align*}
	The stability bound is obtained by estimating the terms on the right-hand side by subsequently applying the \brev Hölder \erev inequality and Young's inequality.
 We start with the first term, which is estimated for all $n\in \mathbb N$ and $\epsilon>0$ by
	\begin{align*}
	[\gamma_T \E^\text{inc}_n,\bvarphi\tauhn]_\Gamma\le\norm{\gamma_T\E^\text{inc}_n}_{\bL^{\frac{1+\alpha}{\alpha}}(\Gamma)}\norm{\bvarphi\tauhn}_{\bL^{1+\alpha}(\Gamma)}
	\le
	C\norm{\gamma_T \E^{\text{inc}}_n}_{\bL^{\frac{1+\alpha}{\alpha}}(\Gamma)}^{\frac{1+\alpha}{\alpha}} +\epsilon \norm{\bvarphi\tauhn}_{\bL^{1+\alpha}(\Gamma)}^{1+\alpha}.
	\end{align*}
		 Choosing $\epsilon$ small enough enables the absorption of the term depending on the numerical solution $\bvarphi\tauhn$.

	The last summand regarding $\bvarphi\tauhn$ is bounded by the same chain of inequalities via
	\begin{align*}
	\left( \gamma_T \Hinc_n , \ba\left(\bvarphi\tauhn+\gamma_T \H^\text{inc}_n \right) \right)_\Gamma 
	 &
	\le C\norm{\gamma_T \Hinc_n}^{1+\alpha}_{\bL^{1+\alpha}(\Gamma)}+\epsilon \norm{\ba\left(\bvarphi\tauhn+\gamma_T \H^\text{inc}_n \right)}^{\frac{1+\alpha}{\alpha}}_{\bL^{\frac{1+\alpha}{\alpha}}(\Gamma)}
	\\ &
	= C\norm{\gamma_T \Hinc_n}^{1+\alpha}_{\bL^{1+\alpha}(\Gamma)}+\epsilon \norm{\bvarphi\tauhn+\gamma_T \H^\text{inc}_n }^{1+\alpha}_{\bL^{1+\alpha}(\Gamma)}.
	\end{align*}
\end{proof}
\subsection{Main result: Error bounds for the full discretization}
 \begin{theorem}\label{them:semi-discr-time-experimental}
  	 Let $(\bvarphi,\bpsi)$ 
  	  be the solution of the boundary integral equation \eqref{bie-weak-nonlinear}, assumed to be of regularity
  	  $$(\bvarphi,\bpsi)\in \H_0^{m+5}\left(0,T;\mXG^2\right)
  	  \cap 
  	  \H_0^{3}\left(0,T;  \H^{k+1}_{\times}(\Gamma)^2\right).$$ Furthermore, let $\gamma_T \H(t)\inc \in \bL^{1+\alpha}(\Gamma)$ for all $t\in[0,T]$.	 
 	Consider the fully discrete boundary densities $(\bvarphi\tauhn,\bpsi\tauhn)\in \V_h^m\times \X_h^m$ for all $n \le N$, solutions to the the fully-discrete boundary integral equations \eqref{eq:full-dis}, discretized by 
 	\begin{itemize}
 		\item Radau IIA based Runge-Kutta convolution quadrature with $m$-stages in time, and
 		\item Raviart-Thomas boundary elements of order $k$ in space.
 	\end{itemize}
 	For $m>2$, we assume the scheme to be applied to the shifted boundary integral equation \eqref{bie-weak-shifted} by some positive shift $\sigma >0$.
 	 The error of the $m$-stage Radau IIA semi-discretization, denoted by ${\be_\bvarphi = \bvarphi_h^{\tau}-\Pi_h\bvarphi}$ and $\be_\bpsi = \bpsi_h^{\tau}-\Pi_h\bpsi$, fulfill the bounds
 	\begin{align*}
 	\left(\tau \sum_{n=0}^N \norm{\left(\left(\pt^{\tau}\right)^{-1} \be_\bvarphi\right)_n}^2_{\mXG}
 	+\norm{\left(\left(\pt^{\tau}\right)^{-1} \be_\bpsi\right)_n}^2_{\mXG}\right)^{1/2}
 	\le C \left(\tau^{m}+h^{\alpha(k+1)} \right).
 	\end{align*}
 Errors in the electromagnetic fields, defined through the discrete representation formulas \eqref{eq:time-dependent-representation-E-tau}--\eqref{eq:time-dependent-representation-H-tau}, are bounded via
  	\begin{align*}
  	\left(\tau \sum_{n=0}^N \norm{\E\tauhn-\E(\vtn)}^2_{\bL^2(\Omega)}
  	+\norm{\H\tauhn-\H(\vtn)}^2_{\bL^2(\Omega)}\right)^{1/2}
  	\le C \left(\tau^{m}+h^{\alpha(k+1)} \right),
  	\end{align*}
  	where the notation $\E(\vtn) = \left(\E(t_n+c_i\tau)\right)_{i=1}^m$ has been applied to the electromagnetic fields. 
  	The constants in the error bounds depend on higher Sobolev norms of the exact solution $(\bvarphi,\bpsi)$, the shift $\sigma$ for $m>2$, the boundary $\Gamma$ and polynomially on the final time $T$.
  
 \end{theorem}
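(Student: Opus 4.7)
The plan is to combine energy techniques based on the time-discrete transmission problem (Lemma~\ref{lem:transmis-discrete}, Lemma~\ref{lem: discrete-Calderon-properties}) with the strong monotonicity of Lemma~\ref{lem:pointwise-bounds-a}, and to circumvent the degeneracy of the monotonicity weight through the unconditional a priori bound of Proposition~\ref{prop:phi-bound-tau}. I would first introduce the Raviart--Thomas projection $\Pi_h$ from Lemma~\ref{lem:RT} and the errors $\be_\bvarphi = \bvarphi_h^\tau - \Pi_h \bvarphi$, $\be_\bpsi = \bpsi_h^\tau - \Pi_h \bpsi$, and subtract the full discretization \eqref{eq:full-dis} from the continuous equation \eqref{bie-weak-nonlinear} discretized at the Radau stages $t_n + c_i\tau$. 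This produces the error identity
\[
\left[(\bseta_h, \bxi_h), \left[\Caldimp(\pt^\tau)(\be_\bvarphi, \be_\bpsi)\right]^n\right]_\Gamma + \left(\bseta_h, \ba(\bvarphi^{\tau,n}_h + \gamma_T \Hinc_n) - \ba(\Pi_h \bvarphi_n + \gamma_T \Hinc_n)\right)_\Gamma = \delta^n(\bseta_h, \bxi_h)
\]
for all $(\bseta_h, \bxi_h) \in \V_h \times \X_h$, in which the defect $\delta^n$ aggregates three contributions: the Runge--Kutta consistency defect in $\Caldimp(\pt^\tau)(\bvarphi,\bpsi)$, the spatial projection defect inside the Calderón operator, and the nonlinear spatial defect $\ba(\bvarphi_n + \gamma_T \Hinc_n) - \ba(\Pi_h\bvarphi_n + \gamma_T \Hinc_n)$.

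Next, I would test with $(\be_\bvarphi^n, \be_\bpsi^n)$, weight by $e^{-\sigma n \tau}$, and sum over $n\ge 0$. By Lemma~\ref{lem: discrete-Calderon-properties} the Calderón bilinear form equals the energy of the transmission fields $(\be_\E^\tau, \be_\H^\tau)$ associated through Lemma~\ref{lem:transmis-discrete} to $(\be_\bvarphi, \be_\bpsi)$, and Lemma~\ref{lem:time-discrete-coercivity} combined with the shift of Remark~\ref{rem:shift-Cald-pos} yields the coercive lower bound $\sigma\sum e^{-\sigma n \tau}(\|\be_\E^{\tau,n}\|^2_{\bL^2(\mathbb R^3\setminus\Gamma)} + \|\be_\H^{\tau,n}\|^2_{\bL^2(\mathbb R^3\setminus\Gamma)})$. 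The strong monotonicity \eqref{pos:a} controls the nonlinear term by the weighted expression $\alpha\sum e^{-\sigma n \tau} \int_\Gamma w_n \abs{\be_\bvarphi^n}^2 \, d\sigma$ with $w_n = (\abs{\bvarphi^{\tau,n}_h + \gamma_T \Hinc_n} + \abs{\Pi_h\bvarphi_n + \gamma_T \Hinc_n})^{\alpha-1}$. On the right-hand side the convolution quadrature defect is bounded in $\mXG$ at order $\tau^m$ by Proposition~\ref{th:RK-CQ} using the assumed regularity $\H_0^{m+5}(0,T;\mXG^2)$, the spatial Calderón defect is bounded in $\mXG$ at order $h^{k+1}$ by combining Lemma~\ref{lem:laplace-bounds} with Lemma~\ref{lem:RT}, and the nonlinear spatial defect is bounded in $\bL^{(1+\alpha)/\alpha}(\Gamma)$ via the Hölder continuity \eqref{hoelder} together with the $\bL^2$-approximation rate of Lemma~\ref{lem:RT} and the embedding \eqref{est: L1pa-to-L2}, producing the spatial rate $h^{\alpha(k+1)}$.

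The principal obstacle is closing the cross-defect term $(\be_\bvarphi^n, \delta^n)_\Gamma$: Hölder's inequality demands a bound on $\|\be_\bvarphi^n\|_{\bL^{1+\alpha}(\Gamma)}$, whereas the monotonicity provides only a weighted $\bL^2$-norm whose weight $w_n$ degenerates wherever both $\bvarphi^\tau_h + \gamma_T \Hinc$ and $\Pi_h\bvarphi + \gamma_T \Hinc$ vanish. This gap is bridged by the Hölder interpolation
\[
\|\be_\bvarphi^n\|^{1+\alpha}_{\bL^{1+\alpha}(\Gamma)} \le \Bigl(\int_\Gamma w_n \abs{\be_\bvarphi^n}^2\,d\sigma\Bigr)^{(1+\alpha)/2} \Bigl(\int_\Gamma (\abs{\bvarphi_h^{\tau,n} + \gamma_T\Hinc_n} + \abs{\Pi_h\bvarphi_n + \gamma_T\Hinc_n})^{1+\alpha}\,d\sigma\Bigr)^{(1-\alpha)/2},
\]
obtained with the dual Hölder exponents $2/(1+\alpha)$ and $2/(1-\alpha)$, in which the second factor is absorbed by the unconditional bound of Proposition~\ref{prop:phi-bound-tau} applied to both $\bvarphi_h^\tau$ and $\Pi_h\bvarphi$. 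Whenever a defect arises as a temporal difference, the discrete partial integration inequality of Lemma~\ref{lem:discrete-partial-integration} transfers the $\pt^\tau$-derivative onto the defect, so that the error factor only enters as $(\pt^\tau)^{-1}\be_\bvarphi$ and is absorbed into the energy on the left. The delicate balancing of exponents in this chain is precisely what produces the sharp spatial rate $h^{\alpha(k+1)}$ instead of a weaker one.

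Finally, the energy bound on $(\be_\E^\tau, \be_\H^\tau)$ has to be translated into the stated $\mXG$-estimate on $(\pt^\tau)^{-1}(\be_\bvarphi, \be_\bpsi)$. Since $\be_\bvarphi = \jmp{\gamma_T}\be_\H^\tau$ and $-\be_\bpsi = \jmp{\gamma_T}\be_\E^\tau$ by Lemma~\ref{lem:transmis-discrete}, the trace theorem controls the $\mXG$-norm of $(\pt^\tau)^{-1}\be_\bvarphi$ by $\|(\pt^\tau)^{-1}\be_\H^\tau\|_{\H(\curl,\mathbb R^3\setminus\Gamma)}$; the discrete Maxwell equations \eqref{MW1-tau-R3-Gamma}--\eqref{MW2-tau-R3-Gamma} imply $\curl(\pt^\tau)^{-1}\be_\H^\tau = \be_\E^\tau$, so the $\curl$-part reduces to the energy, while the $\bL^2$-part is handled by a discrete temporal Poincaré bound for the operator $(\pt^\tau)^{-1}$ (obtained from \cite[Lemma 2.1]{L94} applied with $L(s) = s^{-1}$, $\kappa = -1$). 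The $\bL^2(\Omega)$-bound on the field errors $\E^\tau_h(t_n) - \E(t_n)$ and $\H^\tau_h(t_n) - \H(t_n)$ then follows since, up to a convolution quadrature defect controlled by Lemma~\ref{lem:laplace-bounds} and Proposition~\ref{th:RK-CQ}, these coincide with the restriction of $(\be_\E^\tau, \be_\H^\tau)$ to $\Omega^+$.
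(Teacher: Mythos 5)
Your overall architecture matches the paper's: the perturbed equation obtained by inserting the Raviart--Thomas projection, testing the resulting error equation with $(\be_\bvarphi,\be_\bpsi)$, the energy identity of Lemma~\ref{lem: discrete-Calderon-properties} combined with Lemma~\ref{lem:time-discrete-coercivity} (resp.\ the shift of Remark~\ref{rem:shift-Cald-pos} for $m>2$), the strong monotonicity of Lemma~\ref{lem:pointwise-bounds-a} with its degenerate weight, and the rescue of the nonlinear defect through Proposition~\ref{prop:phi-bound-tau} and a chain of H\"older inequalities. Your weighted interpolation with exponents $2/(1\pm\alpha)$ is essentially the paper's treatment of the nonlinear defect, and your final translation of the domain energy into the $\mXG$-bound on $(\pt^\tau)^{-1}\be_\bvarphi$ via the trace theorem, the discrete Maxwell equations and the boundedness of the discrete integrator is also what the paper does.

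The gap is in the Calder\'on consistency defect. You propose to bound the defect $[\Caldimp(\pt^\tau)-\Caldimp(\pt)](\bvarphi,\bpsi)$ at order $\tau^m$ in $\mXG$ and, separately, to close its pairing with the error by Lemma~\ref{lem:discrete-partial-integration}. These two steps are incompatible: the discrete partial integration inequality places a full $\pt^\tau$ on the defect, so what you actually need is $\|\pt^\tau\bigl([\Caldimp(\pt^\tau)-\Caldimp(\pt)](\bvarphi,\bpsi)\bigr)\|_{\mXG}=O(\tau^m)$. Since $\|\Caldimp(s)\|_{\mXG\leftarrow\mXG}\lesssim |s|^2/\Re s$ by Lemma~\ref{lem:laplace-bounds}, the symbol $s\,\Caldimp(s)$ has $\kappa=3$, $\nu=1$, and Proposition~\ref{th:RK-CQ} then only yields order $\min(m+1,\,m+1-\kappa+\nu)=m-1$; the crude estimate $\|\pt^\tau\|\lesssim\tau^{-1}$ gives the same loss. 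This is precisely the obstacle the paper's proof is built to circumvent: it rewrites the defect pairing, via the jump conditions of both the discrete and the continuous Calder\'on operators, as boundary pairings involving the intermediate fields $\E_\Pi^{\tau,h}-\E$ and $\gamma_T^-\H_\Pi^{\tau,h}$, converts these into domain integrals by Green's formula, and then uses the discrete Maxwell equations \eqref{MW1-tau-R3-Gamma}--\eqref{MW2-tau-R3-Gamma} to trade $\pt^\tau\E_\Pi^{\tau,h}=\curl\H_\Pi^{\tau,h}$, so the differentiated defect reduces to $\curl(\H_\Pi^{\tau,h}-\H)+(\pt-\pt^\tau)\E$, both of order $\tau^m+h^{k+3/2}$ by \eqref{est: E-H-Pi} and Proposition~\ref{th:RK-CQ} with $\kappa=1$. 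Without this Green's-formula rewriting of the defect in terms of transmission fields --- the main new idea of this proof, and absent from your plan --- your argument delivers only $\tau^{m-1}$ in time.
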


 \begin{proof}
 	Throughout this proof, whenever an expression holds for arbitrary $n\in \mathbb N$ (or respectively at all time points $t_n$), we omit the index for notational convenience. For the sake of presentation, we further assume that $m=2$, thus allowing for the first coercivity property of Lemma~\ref{lem:time-discrete-coercivity}. The proof readily generalizes to $m>2$, by means of Remark~\ref{rem:shift} and the resulting positivity as described in Remark~\ref{rem:shift-Cald-pos}. 
 	
 	We start by inserting a projection of the exact solution, which yields a sequence of  defects 
 	${\bd =(\bd^n)_{n\ge 0}}$, where each of the sequence elements fulfill ${\bd^n\in \V^m_h \times\X^m_h}$ for all $n\ge 0$, such that the perturbed boundary integral equation
 	\begin{align}\label{eq:perturbed-bie}
 	\left[
 	\begin{pmatrix}
 	\bseta_h \\ \bxi_h
 	\end{pmatrix},
 	\Caldimp(\pt^\tau)
 	\begin{pmatrix}
 	\Pi_h \bvarphi \\
 	\Pi_h \bpsi
 	\end{pmatrix}
 	\right]_\Gamma+
 	\left( \bseta_h , \ba\left(\Pi_h\bvarphi+\gamma_T \H^\text{inc} \right) \right)_\Gamma = [\gamma_T \E^\text{inc},\bseta_h]_\Gamma
 	+\left[
 	\begin{pmatrix}
 	\bseta_h \\ \bxi_h
 	\end{pmatrix},\bd
 	\right]_\Gamma
 	\end{align}
 	holds. \brev We define fields associated with the projected boundary densities through the discrete representation formulas \erev
 	\begin{align}\label{def:W}
 	\begin{pmatrix}
 	\E_{\Pi}^{\tau,h} \\ \H_{\Pi}^{\tau,h}
 	\end{pmatrix} & = 
 	\begin{pmatrix}
 	-\mathbfcal{S}(\pt^\tau)\Pi_h\bvarphi+\mathbfcal{D}(\pt^\tau)\Pi_h\bpsi \\
 	- \mathbfcal{D}(\pt^\tau)  \Pi_h\bvarphi - \mathbfcal{S}(\pt^\tau) \Pi_h \bpsi
 	\end{pmatrix}
 	  = \mathbfcal W(\pt^\tau) \begin{pmatrix}
 	  \Pi_h\bvarphi \\ \Pi_h \bpsi
 	  \end{pmatrix} . 
 	\end{align}
 	These intermediate fields approximate the exact fields $\E$ and $\H$ at least in the stated order due to
 	\begin{align*}
 	\begin{pmatrix}
 	\E_{\Pi}^{\tau,h}-\E \\ \H_{\Pi}^{\tau,h}-\H
 	\end{pmatrix}
 	&= (\mathbfcal W(\pt^\tau)-\mathbfcal W (\pt))
 	\begin{pmatrix} \Pi_h\bvarphi \\ \Pi_h \bpsi \end{pmatrix}
 	+ \mathbfcal W(\pt)
 	\begin{pmatrix} \Pi_h\bvarphi-\bvarphi \\ \Pi_h \bpsi -\bpsi \end{pmatrix},
 	\end{align*}
 	which implies, due to the time-harmonic bounds of Lemma~\ref{lem:laplace-bounds} and the general convolution quadrature approximation results of Proposition~\ref{th:RK-CQ}, the existence of a constant $C$ depending only the surface $\Gamma$ and polynomially on the final time $T$, such that for all $n\le N$ we have the bound
 	\begin{align}\label{est: E-H-Pi}
	 	\begin{split}
		 	&\norm{\left(\E_{\Pi}^{\tau,h}\right)_n-\E(\vtn)}_{\H(\curl,\Omega)}+\norm{\left(\H_{\Pi}^{\tau,h}\right)_n-\H(\vtn)}_{\H(\curl,\Omega)}
		 	\\& \quad \le C\left(\tau^m\norm{\begin{pmatrix}\bvarphi \\ \bpsi\end{pmatrix}}_{\H^{m+5}_0(0,T; \mXG^2)} 
		 	+h^{k+3/2}\norm{\begin{pmatrix}\bvarphi \\ \bpsi\end{pmatrix}}_{\H^{3}_0(0,T; \H^{k+1}_{\times}(\Gamma)^2)} \right).
 		\end{split}
 	\end{align} 
 	Subtracting the perturbed scheme from the full discretization yields, by testing with 
 	${\be^n_\bvarphi = \bvarphi\tauhn-\Pi_h\bvarphi(\vtn)}$ and ${\be^n_\bpsi = \bpsi\tauhn-\Pi_h\bpsi(\vtn)}$, the following error equation
 	\begin{align}\label{eq:error-equation}
 	\begin{split}&\left[
 	\begin{pmatrix}
 	\be_\bvarphi^{n} \\ \be_\bpsi^{n}
 	\end{pmatrix},
 	\left(\Caldimp(\pt^\tau)
 	\begin{pmatrix}
 	\be_\bvarphi\\
 	\be_\bpsi
 	\end{pmatrix}\right)_n
 	\right]_\Gamma
 	 +
 	\left( \be_\bvarphi^{n} , \ba\left(\bvarphi^{\tau,h}_n+\gamma_T \H^\text{inc}_n \right)-\ba\left(\Pi_h\bvarphi(\vtn)+\gamma_T \H^\text{inc}_n \right) \right)_\Gamma 
 	\\ & \quad\quad\quad\quad\quad\quad\quad\quad\quad\quad\quad\quad\quad\quad\quad\quad\quad\quad\quad\quad\quad\quad\quad\quad\quad\quad\quad\quad= \left[
 	\begin{pmatrix}
 	\be_\bvarphi^{n} \\ \be_\bpsi^{n}
 	\end{pmatrix},\bd^n
 	\right]_\Gamma.
 	\end{split}
 	\end{align}
 	\brev In the equation above and in the rest of the proof, we use the shorthand notation $\H\inc_n= \H\inc(\vtn)$ for the electromagnetic incident waves. \erev 
 	Note that the term with the nonlinearity $\ba$ is readily estimated from below, by the pointwise monotonicity estimate of $\ba$ from Lemma~\ref{lem:pointwise-bounds-a}.
	The first summand, corresponding to the time-discrete Calderón operator, is estimated from below by \brev applying \eqref{eq:discrete-Cald-coerc} \erev, which yields
	\begin{align*}
\left[
\begin{pmatrix}
\be_\bvarphi^{n} \\ \be_\bpsi^{n}
\end{pmatrix},
\left(\Caldimp(\pt^\tau)
\begin{pmatrix}
\be_\bvarphi\\
\be_\bpsi
\end{pmatrix}\right)_n
\right]_\Gamma
	&=
	\int_{\mathbb R^3 \setminus \Gamma} (\E\tauh-\E_{\Pi}^{\tau,h})_n\cdot \left(\pt^\tau\left(\E\tauh-\E_{\Pi}^{\tau,h}\right)\right)_n
	\\ & \quad\quad\quad \quad+ \left(\H\tauh-\H_{\Pi}^{\tau,h}\right)_n\cdot \left(\pt^\tau\left(\H\tauh-\H_{\Pi}^{\tau,h}\right)\right)_n  \,\mathrm{d\bx}.
	\end{align*}

   To employ the coercivity of the discrete operator $\pt^\tau$, given by Lemma~\ref{lem:time-discrete-coercivity}, a weighted summation on both sides is necessary.
   
    Consequently, summation of the numerical scheme, weighted at the time point $t_n$ with  $\rho^n = e^{-2n\tau/T}$ and inserting this identity into the left-hand side yields a positive constant $C$, such that 
 	\begin{align*}
 	&\sum_{n=0}^{N} \rho^n\left(
 	 \norm{\left(\E\tauh-\E_{\Pi}^{\tau,h}\right)_n}^2_{\bL^2(\mathbb R^3\setminus\Gamma)}+ \norm{\left(\H\tauh-\H_{\Pi}^{\tau,h}\right)_n}^2_{\bL^2(\mathbb R^3\setminus\Gamma)} \right)
 	\\ + &
 	\sum_{n=0}^{N} \rho^n\int_{\Gamma} \left(\abs{\bvarphi\tauhn+\gamma_T \H_n^\text{inc}}+\abs{\Pi_h\bvarphi(\vtn)+\gamma_T \H_n^\text{inc}}
 	\right)^{\alpha -1}\abs{\be^n_\bvarphi}^2 \,\mathrm d \bx
 	\\ & \quad \le C\sum_{n=0}^N  \rho^n\left[\begin{pmatrix}
 	 \be^n_\bvarphi\\  \be^n_\bpsi 
 	\end{pmatrix},\bd^n\right]_\Gamma.
 	\end{align*}
 	
 	We turn towards the estimation of the defect, by subtracting the exact boundary integral equation \eqref{bie-weak-nonlinear} from the perturbed equation \eqref{eq:perturbed-bie} to obtain
 	\begin{alignat*}{2}
 	\left[\begin{pmatrix}
 	 \be_\bvarphi^{n}\\  \be_\bpsi^{n}
 	\end{pmatrix},\bd^n\right]_\Gamma
 	&= 
 	\left[
 	\begin{pmatrix}
 	\be_\bvarphi^{n} \\ \be_\bpsi^{n}
 	\end{pmatrix},
 	\left(\Caldimp(\pt^\tau)
 	\begin{pmatrix}
 	\Pi_h \bvarphi \\
 	\Pi_h \bpsi
 	\end{pmatrix}
 	-\Caldimp(\pt)
 	\begin{pmatrix}
 	\bvarphi \\
 	\bpsi
 	\end{pmatrix}\right)_n
 	\right]_\Gamma \quad \quad && \text{(A)}
 	\\ &+
 	\left(\be_\bvarphi^{n} , \ba\left(\Pi_h\bvarphi(\vtn)+\gamma_T \H_n^\text{inc} \right)-
 	\ba\left(\bvarphi(\vtn)+\gamma_T \H_n^\text{inc} \right) \right)_\Gamma && \text{(B)}.
 	\end{alignat*}
 	The defect has been split into two parts, first into a temporal defect where the approximation of the time-dependent Calderón operator enters and secondly a nonlinear defect depending on the nonlinearity $\ba$. In the following, we estimate these terms successively. \\
 
 	(A) 
 	We start with the temporal defect, for which applying the \brev jump relations \erev of both the discrete and the continuous time-dependent Calderón operator yields
 	\begin{align*}
 	\left[
 \begin{pmatrix}
 \be_\bvarphi \\ \be_\bpsi
 \end{pmatrix},
 \Caldimp(\pt^\tau)
 	\begin{pmatrix}
 	\Pi_h \bvarphi \\
 	\Pi_h \bpsi
 	\end{pmatrix}
 	-\Caldimp(\pt)
 	\begin{pmatrix}
 	\bvarphi \\
 	\bpsi
 	\end{pmatrix}
 	\right]_\Gamma
 	&=
 	\left[
\begin{pmatrix}
\be_\bvarphi \\ \be_\bpsi
\end{pmatrix},
 	\begin{pmatrix}
 	\gamma_T^+ \E_{\Pi}^{\tau,h} -\gamma_T^+ \E \\
 	\gamma_T^- \H_{\Pi}^{\tau,h}
 	\end{pmatrix}
 	\right]_\Gamma.
 	\end{align*} 
 	Writing the numerical solution and the projected exact solution in terms of the jumps of their respective fields and sorting for the inner and outer fields (in the left argument of the duality) yields
 	\begin{align}\label{eq:defect-terms}
 	\begin{split}
 	\left[
 	\begin{pmatrix}
 	\bvarphi_h^\tau-\Pi_h\bvarphi \\ \bpsi_h^\tau-\Pi_h\bpsi
 	\end{pmatrix},
 	\begin{pmatrix}
 	\gamma_T^+ \E_{\Pi}^{\tau,h} -\gamma_T^+ \E \\
 	\gamma_T^- \H_{\Pi}^{\tau,h}
 	\end{pmatrix}
 	\right]_\Gamma &= 
 	\left[
 	\begin{pmatrix}
 	\gamma_T^+(\H\tauh-\H_{\Pi}^{\tau,h}) \\ -\gamma_T^+(\E\tauh-\E_{\Pi}^{\tau,h}) 
 	\end{pmatrix},
 	\begin{pmatrix}
 	\gamma_T^+ \E_{\Pi}^{\tau,h} -\gamma_T^+ \E \\
 	\gamma_T^- \H_{\Pi}^{\tau,h}
 	\end{pmatrix}
 	\right]_\Gamma
 	\\ & -
 	\left[
 	\begin{pmatrix}
 	\gamma_T^-(\H\tauh-\H_{\Pi}^{n,h}) \\ -\gamma_T^-(\E\tauh-\E_{\Pi}^{\tau,h}) 
 	\end{pmatrix},
 	\begin{pmatrix}
 	\gamma_T^+ \E_{\Pi}^{\tau,h} -\gamma_T^+ \E \\
 	\gamma_T^- \H_{\Pi}^{\tau,h}
 	\end{pmatrix}
 	\right]_\Gamma.
 	\end{split}
 	\end{align}
 	These terms, which correspond to errors in the inner and outer domains respectively, are bounded successively starting with the first summand.
 	  Rewriting the only inner trace appearing in the first summand via the discrete transmission problem yields
 	\begin{align*}
 	\gamma_T^-\H_{\Pi}^{\tau,h}&= 	\gamma_T^+\H_{\Pi}^{\tau,h}-\Pi_h\bvarphi
 	=
 	\left(\gamma_T^+\H_{\Pi}^{\tau,h}-\gamma_T^+\H\right)+\left(\bvarphi-\Pi_h\bvarphi\right).
 	\end{align*} 
 	Inserting this identity into the first summand yields
 	\begin{alignat*}{2}
 	&\left[
 	\begin{pmatrix}
 	\gamma_T^+(\H\tauh-\H_{\Pi}^{\tau,h}) \\ -\gamma_T^+(\E\tauh-\E_{\Pi}^{\tau,h}) 
 	\end{pmatrix},
 	\begin{pmatrix}
 	\gamma_T^+ \E_{\Pi}^{\tau,h} -\gamma_T^+ \E \\
 	\gamma_T^- \H_{\Pi}^{\tau,h}
 	\end{pmatrix}
 	\right]_\Gamma 
 	\\
 	&\quad = 
 	\left[\gamma_T^+(\H\tauh-\H_{\Pi}^{\tau,h}),\gamma_T^+ (\E_{\Pi}^{\tau,h} -\E)\right]_\Gamma \quad\quad&&\text{(i)}
 	\\ & \quad-\left[\gamma_T^+(\E\tauh-\E_{\Pi}^{\tau,h})  ,\gamma_T^+\left(\H_{\Pi}^{\tau,h}-\H\right) \right]_\Gamma
 	&&\text{(ii)}
 	\\ &\quad -\left[\gamma_T^+(\E\tauh-\E_{\Pi}^{\tau,h}),
 	\bvarphi-\Pi_h\bvarphi
 	\right]_\Gamma.
 	&&\text{(iii)}
 	\end{alignat*}
 	The next paragraphs are dedicated to the successive estimation of the terms (i)--(iii).
 	
 	(i)
 	Applying Green's formula to the first summand yields
 	\begin{align*}
 	&\left[\gamma_T^+ (\H\tauh-\H_\Pi^{\tau,h})
 	,\, 
 	\gamma_T^+ (\E_{\Pi}^{\tau,h} - \E) 
 	\right]_\Gamma
 	\\&=
 	\int_{\Omega} \curl(\H\tauh-\H_\Pi^{\tau,h} ) \cdot (\E_{\Pi}^{\tau,h}-\E) -(\H\tauh-\H_\Pi^{\tau,h}) \cdot \curl( \E_{\Pi}^{\tau,h} - \E ) \,\mathrm d\bx
 	\\ & \le
 	\int_{\Omega^+} \left(\pt^\tau\E\tauh-\pt^\tau\E_\Pi^{\tau,h} \right)\cdot \left(\E_{\Pi}^{\tau,h}-\E\right) \,\mathrm d\bx
 	\\
 	&+\norm{\H\tauh-\H_\Pi^{\tau,h}}_{\bL^2(\Omega+)} \norm{\curl\E_{\Pi}^{\tau,h} - \curl\E  }_{\bL^2(\Omega+)}.
 	\end{align*}
 	Summation over $n$, 
 	applying the discrete integration bound of Lemma~\ref{lem:discrete-partial-integration} to the first summand and the Cauchy-Schwarz inequality to the second summand consequently implies, for arbitrary $\epsilon>0$ the existence of a positive constant $C$, such that
 	\begin{align*}
 	\sum_{n=0}^{N} &\rho^n \left[\gamma_T^+ \left(\H\tauhn-\left(\H_\Pi^{\tau,h}\right)_n\right)
 	,\, 
 	\gamma_T^+ \left(\left(\E_{\Pi}^{\tau,h}\right)_n - \E(\vtn)\right) \right]_\Gamma
 	\\\le &
 	\sum_{n=0}^{N} \rho^n\left(\epsilon\norm{\left(\E\tauh-\E_\Pi^{\tau,h}\right)_n}^2_{\bL^2(\Omega)} +C \norm{\left(\pt^\tau\E_\Pi^{\tau,h}-\pt^\tau\E\right)_n}^2_{\bL^2(\Omega)}\right)
 	\\  +&
 	\sum_{n=0}^{N}\rho^n\left(\epsilon
 	\norm{\left(\H\tauh-\H_\Pi^{\tau}\right)_n}^2_{\bL^2(\Omega)} + C\norm{\left(\E_\Pi^{\tau,h}-\E\right)_n}^2_{\H(\curl,\Omega)}\right).
 	\end{align*}
 	Choosing $\epsilon>0$ small enough enables the absorption of error terms depending on the numerical solution $\E\tauhn$ and $\H\tauhn$. The error of the intermediate field $\E_\Pi^{\tau,h}$ in the $\H(\curl,\Omega)$ norm is bounded by \eqref{est: E-H-Pi} in the desired order.
 	 The remaining defect term, which is numerically differentiated, is rewritten by exploiting \eqref{MW1-tau-R3-Gamma} and introducing an intermediate term, to obtain
 	\begin{align*}
 	\norm{\left(\pt^\tau\E_\Pi^{\tau,h}-\pt^\tau\E\right)_n}_{\bL^2(\Omega^+)}
 	&\le \norm{\left(\curl\H_\Pi^{\tau,h}-\curl\H\right)_n}_{\bL^2(\Omega^+)}+
 	\norm{\left(\pt \E-\pt^\tau\E\right)_n}_{\bL^2(\Omega^+)}
 	\\ & \le C(\tau^m+h^{k+3/2}),
 	\end{align*}
 	where the bound is the consequence of \eqref{est: E-H-Pi} and Proposition~\ref{th:RK-CQ} respectively.
 	
 	(ii) We repeat the argument structure and again apply Green's formula to obtain
 	\begin{align*}
 	&\left[\gamma_T^+\left(\E\tauh-\E_{\Pi}^{\tau,h}\right),\gamma_T^+\left(\H_{\Pi}^{\tau,h}-\H\right) \right]_\Gamma
 \\	&=
 	\int_{\Omega^+}\curl\left(\E\tauh-\E_{\Pi}^{\tau,h}\right) \cdot  \left(\H_{\Pi}^{\tau,h}-\H\right) -\left(\E\tauh-\E_{\Pi}^{\tau,h}\right)\cdot  \curl\left(\H_{\Pi}^{\tau,h}-\H\right) \mathrm d\bx
 	\\ & \le
 	-\int_{\Omega^+} \left(\pt^\tau\H\tauh-\pt^\tau\H_{\Pi}^{\tau,h}\right)\cdot\left(\H_{\Pi}^{\tau,h}-\H\right) \mathrm d\bx  
 	\\&
 	+\norm{\E\tauh-\E_{\Pi}\tauh}_{\bL^2(\Omega^+)}\norm{\curl\H_{\Pi}\tauh-\curl\H}_{\bL^2(\Omega^+)}   .
 	\end{align*}
 	Applying the discrete integration bound of Lemma~\ref{lem:discrete-partial-integration} to the first summand consequently leads to the estimate
 	\begin{align*}
 	&\sum_{n=0}^N \rho^n	\left[\gamma_T^+\left(\E\tauhn-\left(\E_{\Pi}^{\tau,h}\right)_n\right),\gamma_T^+\left(\left(\H_{\Pi}^{\tau,h}\right)_n-\H(\vtn)\right) \right]_\Gamma
 	\\& \le 
 	\sum_{n=0}^N \rho^n\left(\epsilon\norm{\left(\H\tauh-\H_{\Pi}^{\tau,h}\right)_n}^2_{\bL^2(\Omega^+)} +C\norm{\left(\pt^\tau\H_{\Pi}\tauh-\pt^\tau\H\right)_n}^2_{\bL^2(\Omega^+)}\right)
 	\\ & \quad\quad +
 	\rho^n\left(\epsilon \norm{\left(\E\tauh-\E_{\Pi}^{\tau,h}\right)_n}^2_{\bL^2(\Omega^+)}+ C\norm{\left(\H_{\Pi}^{\tau,h}\right)_n-\H(\vtn)}^2_{\H(\curl,\Omega^+)}
 	 \right),
 	\end{align*}  
 	where $\epsilon>0$ is chosen small enough to absorb the terms depending on the numerical solution.  
 	Applying the discrete identity \eqref{MW2-tau-R3-Gamma} yields further
 	\begin{align*}
\norm{\left(\pt^\tau\H_{\Pi}\tauh-\pt^\tau\H\right)_n}_{\bL^2(\Omega^+)}
 	&\le    	\norm{\curl\left(\E_{\Pi}^{\tau,h}\right)_n-\curl\E(\vtn)}_{\bL^2(\Omega^+)}
 	+
 	\norm{\left(\pt \H-\pt^{\tau}\H\right)_n}_{\bL^2(\Omega^+)}
 	\\ &
 	\le C(\tau^m+h^{k+3/2}).
 	\end{align*} 
 	
 	(iii)  
 	We use the discrete partial integration bound to obtain
 	\begin{align}\label{est:pt-1-E}
 	\begin{split}
 	&\sum_{n=0}^N e^{-n\sigma \tau} \left[\gamma_T^+\left(\E\tauhn-\left(\E_{\Pi}^{\tau,h}\right)_n\right),
 	\bvarphi(\vtn)-\Pi_h\bvarphi(\vtn)
 	\right]_\Gamma
 	\\ & \le
 	\sum_{n=0}^N e^{-n\sigma \tau}\left(\epsilon\norm{\left(\left(\pt^{\tau}\right)^{-1}\gamma_T^+(\E\tauh-\E_{\Pi}^{\tau,h})\right)_n}^2_{\mXG}
    +C\norm{\pt^{\tau}\left(\bvarphi-\Pi_h\bvarphi\right)(\vtn)}^2_{\mXG}\right).
    	\end{split}
 	\end{align}
 	The second summand is of the required order, seen by splitting the discrete time derivative into $\pt^\tau = \pt-(\pt-\pt^\tau)$ and applying Lemma~\ref{th:RK-CQ} to obtain
 	\begin{align*}
 	\norm{\left(\pt^{\tau}-\pt\right)\left(\bvarphi-\Pi_h\bvarphi\right)}_{\mXG}\le C \tau^m \norm{\bvarphi}_{\H_0^{m+3}(0,T;\mXG)}.
 	\end{align*}
 	Applying the trace theorem to the first summand yields
 	\begin{align*}
 	&\norm{\left(\left(\pt^{\tau}\right)^{-1}\gamma_T^+(\E\tauh-\E_{\Pi}^{\tau,h})\right)_n}^2_{\mXG}
 	\le \norm{\left(\left(\pt^{\tau}\right)^{-1}\E\tauh-\left(\pt^{\tau}\right)^{-1}\E_{\Pi}^{\tau,h}\right)_n}^2_{\H(\curl,\Omega^+)}
 	\\&
 	=\int_{\Omega^+}\abs{\left(\left(\pt^{\tau}\right)^{-1}\E_h^\tau-\left(\pt^{\tau}\right)^{-1}\E_{\Pi}^{\tau,h}\right)_n }^2+\abs{\H_h^n-\H_{\Pi}^{n,h}}^2 \,\mathrm d\bx
 	 .
 	\end{align*}
   Furthermore, by \cite[Theorem 3.1]{BanjaiLubich2019} with $L(s)=1$ and $R(s)=s^{-1}$, we estimate the discrete integral in the $\bL^2$-norm with a constant $C$ depending only on the final time $T$, such that
 	\begin{align*}
  \sum_{n=0}^\infty \rho^n\int_{\Omega^+}\abs{\left(\left(\pt^{\tau}\right)^{-1}\E_h^\tau-\left(\pt^{\tau}\right)^{-1}\E_{\Pi}^{\tau,h}\right)_n }^2\,\mathrm d\bx
    \le C \sum_{n=0}^\infty \rho^n\norm{\left(\E\tauh-\E_{\Pi}^{\tau,h} \right)_n}_{\bL^2(\Omega^+)}^2.
 	\end{align*}
  Choosing $\epsilon$ small enough in \eqref{est:pt-1-E} allows for the absorption of the remaining term to the left-hand side. 
 \ \\
 	We turn our attention to the second summand of \eqref{eq:defect-terms}, consisting mostly of defects in term of the traces of the inner domain $\Omega^-$. Structurally, the process of estimation is identical, starting from rewriting the only remaining term depending on outer traces by
 	\begin{align*}
 	\gamma_T^+\E_{\Pi}^{\tau,h}-\gamma_T^+\E&= 	\gamma_T^-\E_{\Pi}^{\tau,h}-\Pi_h\bpsi-\gamma_T^+\E
 	=
 	\left(\gamma_T^-\E_{\Pi}^{\tau,h}-\gamma_T^-\E\right)+\left(\bpsi-\Pi_h\bpsi\right),
 	\end{align*}
 	where we exploited that the exact solution vanishes in the inner domain, i.e. ${\gamma_T^-\E=0}$.
 	Inserting this identity on the right argument of the second summand of \eqref{eq:defect-terms} yields
 	\begin{alignat*}{2}
 	&\left[
 	\begin{pmatrix}
 	\gamma_T^-(\H\tauh-\H_{\Pi}^{\tau,h}) \\ -\gamma_T^-(\E\tauh-\E_{\Pi}^{\tau,h}) 
 	\end{pmatrix},
 	\begin{pmatrix}
 	\gamma_T^+ \E_{\Pi}^{\tau,h} -\gamma_T^+ \E \\
 	\gamma_T^- \H_{\Pi}^{\tau,h}
 	\end{pmatrix}
 	\right]_\Gamma
 	\\&\quad= 
 	\left[\gamma_T^-(\H\tauh-\H_{\Pi}^{\tau,h}),\gamma_T^-\left(\E_{\Pi}^{\tau,h}-\E\right)\right]_\Gamma \quad\quad\quad \quad &&\text{(iv)}
 	\\&\quad +
 	\left[\gamma_T^-(\H\tauh-\H_{\Pi}^{\tau,h}),\bpsi-\Pi_h\bpsi\right]_\Gamma \quad \quad &&\text{(v)}
 	\\ &\quad- 
 	\left[\gamma_T^-(\E\tauh-\E_{\Pi}^{\tau,h}),\gamma_T^- \left(\H_{\Pi}^{\tau,h}-\H\right)
 	\right]_\Gamma. && \text{(vi)}
 	\end{alignat*}
	These terms depending on the inner traces are bounded precisely by the arguments presented to estimate (i)--(iii) respectively.

 	\ \\
 	
 	(B) We introduce the following notation for the nonlinear defect:
 	\begin{align*}
 	\bd_{\ba} =  \ba\left(\Pi_h\bvarphi+\gamma_T \H^\text{inc} \right)-
 	\ba\left(\bvarphi+\gamma_T \H^\text{inc} \right).
 	\end{align*}
 	Rewriting the term of interest (B) by means of a multiplicative intermediate term in combination with Hölder's and Young's inequalities yields
 	\begin{align*}
 	\left(\bvarphi\tauh-\Pi_h\bvarphi , \bd_a\right)_\Gamma
 	& \le 
 	C \int_{\Gamma}\abs{\bvarphi\tauh-\Pi_h\bvarphi}\left(\abs{\bvarphi\tauh+\gamma_T \H^\text{inc}}+\abs{\Pi_h\bvarphi+\gamma_T \H^\text{inc}}
 	\right)^{\frac{\alpha -1}{2}}
 	\\ &
 	\quad\quad \left(\abs{\bvarphi\tauh+\gamma_T \H^\text{inc}}+\abs{\Pi_h\bvarphi+\gamma_T \H^\text{inc}}
 	\right)^{\frac{1-\alpha}{2}}\bd_{\ba} \, \mathrm d\bx
 	\\ & \le
 	\epsilon \norm{\left(\abs{\bvarphi\tauh+\gamma_T \H^\text{inc}}+\abs{\Pi_h\bvarphi+\gamma_T \H^\text{inc}}
 		\right)^{\frac{\alpha -1}{2}} \left(\bvarphi\tauh-\Pi_h\bvarphi\right)}^2_{\bL^2(\Gamma)}
 	\\ & 
 	\quad\quad+C\int_{\Gamma} \left(\abs{\bvarphi\tauh+\gamma_T \H^\text{inc}}^{1-\alpha}+\abs{\Pi_h\bvarphi+\gamma_T \H^\text{inc}}^{1-\alpha}
 	\right)\abs{{\bd}_{\ba}}^2 \, \,\mathrm d\bx.
 	\end{align*}
 	The constant $\epsilon$ is chosen small enough for the first summand to be absorbed in the left-hand side. 
 	
 	All quantities in the factor multiplied with the defect in the integrand are bounded by the stated regularity assumptions on $\bvarphi$ and $\gamma_T\Hinc$, with the exception of $\bvarphi\tauh$. Consequently, these understood terms are dropped and the rest of the proof focuses on this critical factor containing $\bvarphi\tauh$. The key to estimate this term are the already established stability bounds of Proposition~\ref{prop:phi-bound-tau}. Summation on both sides and applying the Hölder inequality, with the parameters $p=\frac{1+\alpha}{1-\alpha}$ and $q= \frac{1+\alpha}{2\alpha}$, repeatedly in space and time yields for the remaining terms
 	\begin{align*}
 	 \sum_{n=0}^\infty \rho^n\int_{\Gamma} \abs{\bvarphi\tauhn}^{1-\alpha}\abs{\bd_{\ba}^n}^2 \, \,\mathrm d\bx
 	& \le
  \sum_{n=0}^\infty \rho^n \norm{\bvarphi\tauhn}^{1-\alpha}_{\bL^{1+\alpha}(\Gamma)}
 	\norm{\bd^n_{\ba} }^2_{\bL^{\frac{1+\alpha}{\alpha}}(\Gamma)}
 	\\ & \le 
 	\left( \sum_{n=0}^\infty \rho^n \norm{\bvarphi\tauhn}^{1+\alpha}_{\bL^{1+\alpha}(\Gamma)}\right)^{\frac{1-\alpha}{1+\alpha}}
 	\left( \sum_{n=0}^\infty \rho^n
 	\norm{\bd^n_{\ba} }^{\frac{1+\alpha}{\alpha}}_{\bL^{\frac{1+\alpha}{\alpha}}(\Gamma)}\right)^{\frac{2\alpha}{1+\alpha}}.
 	\end{align*}
 	The final remaining factor depending on the numerical solution is bound by the already established bounds from Proposition~\ref{prop:phi-bound-tau}. 
 	
 	 An error rate in terms of the mesh width $h$ is now readily obtained by applying the Hölder continuity of $\ba$. For  $\bvarphi\tauh$ of the stated regularity we obtain 
 \begin{align*}
\norm{\bd_{\ba} }^{\frac{1+\alpha}{\alpha}}_{\bL^{\frac{1+\alpha}{\alpha}}(\Gamma)}
&= \norm{\ba\left(\Pi_h\bvarphi+\gamma_T \H^\text{inc} \right)-
	\ba\left(\bvarphi+\gamma_T \H^\text{inc} \right) }^{\frac{1+\alpha}{\alpha}}_{\bL^{\frac{1+\alpha}{\alpha}}(\Gamma)}
\\ & \le
\norm{\Pi_h\bvarphi -\bvarphi }^{1+\alpha}_{\bL^{1+\alpha}(\Gamma)} 
\le
C\norm{\Pi_h\bvarphi -\bvarphi }^{1+\alpha}_{\bL^{2}(\Gamma)} 
\le C h^{(1+\alpha)(k+1)}.
 \end{align*}
 Inserting this estimate above yields the stated result.
 \end{proof}

\brev
\begin{remark}\label{rem:other-RK-kvgz}
The above theorem generalizes to convolution quadrature time discretizations based on the more general class of stiffly accurate A-stable Runge--Kutta methods. This includes the Lobatto IIIC methods, which have stage order $m-1$ and full classical order $2m-2$ (c.f. \cite{HairerWannerII}).
\end{remark}
\erev

  \subsection{Pointwise error bounds}
  In the context of retarded boundary integral equations error estimates are often shown for points $\bx$ away from the boundary.  
  To derive such bounds with reasonable error rates, the following approach is taken. Firstly, alternate error bounds for the boundary densities are derived. 

  Those results bound the error of approximations of the densities $\bvarphi\tauh$ and a slightly modified electric trace $\widetilde \bpsi\tauh$  with regards of the norms $\bL^{1+\alpha}(\Gamma)$ and $\bL^{\frac{1+\alpha}{\alpha}}(\Gamma)$, which are the natural norms for the present setting. Employing the time-harmonic bounds of the potential operators described in Lemma~\ref{lem:pointwise-potential-bound} then yields pointwise error bounds away from the boundary.

\begin{theorem}\label{them: pointwise}
	Consider the setting of Theorem~\ref{them:semi-discr-time-experimental} under the assumptions stated therein and let $\Gamma$ further be smooth. Furthermore, consider the alternative approximations $(\widetilde\bvarphi\tauh,\widetilde\bpsi\tauh)$ of the boundary densities, derived from the fully discrete solution $\bvarphi\tauh$ and  defined through
	$$\widetilde \bvarphi\tauh = \bvarphi\tauh , \quad \quad
	\widetilde \bpsi\tauh = \ba(\bvarphi\tauh+\gamma_T \Hinc)\times\bnu +\gamma_T \Einc.$$ 
	Fully discrete electromagnetic fields $\E\tauh$ and $\H\tauh$ are then defined for these boundary densities through the discrete representation formulas.
	These  numerical solutions then fulfill, for any $\bx\in\Omega$ away from the boundary, the error bound
  	\begin{align*}
  	&\left(\tau \sum_{n=0}^N \abs{
  		\E\tauhn(\bx)-\E(\bx,\vtn)}^2
  	+
  	\abs{\H\tauhn(\bx)-\H(\bx,\vtn)
  	}^2 \right)^{1/2}
  	\le C \tau^{-1}\left(\tau^{m}+h^{\alpha(k+1)} \right)^{\frac{2\alpha}{1+\alpha}},
  	\end{align*}
  	where the constant $C$ depends on higher Sobolev norms of the exact solution, the boundary $\Gamma$, the point $\bx$, on $\alpha$ and polynomially on the final time $T$. \brev  In the formulations of the error bounds, we use again the shorthand $ \vtn = (t_n+c_i\tau)_{i=1}^m$ to denote the evaluation at the stages of the underlying Runge--Kutta method. \erev To give this simplified version of the error bound, the mild mesh width restriction \brev $h^{4\alpha(k+1)}\le C \tau$ \erev was assumed.
\end{theorem}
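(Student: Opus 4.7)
The plan is to reduce the pointwise field error at $\bx\in\Omega$ to $\bL^p(\Gamma)$-norm errors of the boundary densities, and then to use the symbol bound of Lemma~\ref{lem:pointwise-potential-bound} to translate these into pointwise estimates. The alternative trace $\widetilde\bpsi^{\tau,h}$ is designed so that $(\bvarphi^{\tau,h},\widetilde\bpsi^{\tau,h})$ satisfies the continuous nonlinear boundary condition pointwise at every discrete time. Since the exact pair $(\bvarphi,\bpsi)$ satisfies the same identity, subtracting yields
\begin{equation*}
\widetilde\bpsi^{\tau,h}-\bpsi = \bigl(\ba(\bvarphi^{\tau,h}+\gamma_T\Hinc)-\ba(\bvarphi+\gamma_T\Hinc)\bigr)\times\bnu,
\end{equation*}
which by the Hölder continuity \eqref{hoelder} of Lemma~\ref{lem:pointwise-bounds-a} is bounded pointwise by $2|\bvarphi^{\tau,h}-\bvarphi|^\alpha$. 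Both density errors are thus controlled by a single $\bL^{1+\alpha}(\Gamma)$-bound on $\bvarphi^{\tau,h}-\bvarphi$, with the expected exponent shift $p\mapsto p/\alpha$ when passing to the dual side.

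The quantitative input is a discrete space-time $\bL^{1+\alpha}$ error bound on $\be_\bvarphi:=\bvarphi^{\tau,h}-\Pi_h\bvarphi$ of order $\tau^m+h^{\alpha(k+1)}$, which I extract from the proof of Theorem~\ref{them:semi-discr-time-experimental}. The weighted coercivity estimate
\begin{equation*}
\sum_n \rho^n \int_\Gamma W_n^{\alpha-1}|\be_\bvarphi^n|^2\,d\sigma \le C\bigl(\tau^{2m}+h^{2\alpha(k+1)}\bigr),
\end{equation*}
with $W_n=|\bvarphi^{\tau,h}_n+\gamma_T\Hinc_n|+|\Pi_h\bvarphi(t_n)+\gamma_T\Hinc_n|$, arises there as a positive byproduct of the strong monotonicity of $\ba$. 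Splitting $|\be_\bvarphi|^{1+\alpha}=(W^{\alpha-1}|\be_\bvarphi|^2)^{(1+\alpha)/2}W^{(1-\alpha^2)/2}$ and applying Hölder's inequality with exponents $2/(1+\alpha)$ and $2/(1-\alpha)$ both in space and in the weighted time sum, combined with the a priori bound of Proposition~\ref{prop:phi-bound-tau} for the complementary factor $\sum\rho^n\int W^{1+\alpha}$, gives
\begin{equation*}
\sum_n \rho^n \|\be_\bvarphi^n\|_{\bL^{1+\alpha}(\Gamma)}^{1+\alpha} \le C\bigl(\tau^m+h^{\alpha(k+1)}\bigr)^{1+\alpha}.
\end{equation*}
Through the Hölder continuity of $\ba$, the analogous bound for $\widetilde\bpsi^{\tau,h}-\bpsi$ in $\ell^{(1+\alpha)/\alpha}(\bL^{(1+\alpha)/\alpha})$ follows with rate $(\tau^m+h^{\alpha(k+1)})^\alpha$.

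For the pointwise error at $\bx$, I write $\E^{\tau,h}(\bx)-\E(\bx,\cdot)$ through the discrete representation formulas and split into a CQ consistency defect on exact data (of order $\tau^m$ by Proposition~\ref{th:RK-CQ} under the stated temporal regularity) and a density contribution $-\mathbfcal{S}(\pt^\tau)\be_\bvarphi+\mathbfcal{D}(\pt^\tau)(\widetilde\bpsi^{\tau,h}-\bpsi)$ at $\bx$. The density contribution is estimated by transferring the pointwise symbol bound $|s|e^{-d\Re s}$ of Lemma~\ref{lem:pointwise-potential-bound} to the discrete setting via Parseval on the contour $|\zeta|=\rho$: the $|s|$ factor yields an amplification of order $\tau^{-1}$ in the $\ell^2$-in-time norm, since eigenvalues of $\Delta(\zeta)/\tau$ near $\zeta=1$ have vanishing real part, so the exponential cannot compensate. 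Converting the $\ell^{1+\alpha}$ and $\ell^{(1+\alpha)/\alpha}$ space-time bounds on the densities to $\ell^2$ in time (the former for free since $1+\alpha\le 2$, the latter at a cost $\tau^{-(1-\alpha)/(2(1+\alpha))}$ by discrete Hölder), multiplying by $\tau^{1/2}$ to form the Riemann sum, and using the mild step-size restriction $\tau\le Ch^{4\alpha(k+1)}$ to merge the residual $\tau$-powers into a single rate in $\tau^m+h^{\alpha(k+1)}$ produces the claimed bound $\tau^{-1}(\tau^m+h^{\alpha(k+1)})^{2\alpha/(1+\alpha)}$.

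The main obstacle is the careful tracking of exponents across the combined effect of the $\tau^{-1}$ amplification from the potential symbol, the $\ell^q$-to-$\ell^2$ discrete Hölder conversion (free for $q=1+\alpha$ but costly for $q=(1+\alpha)/\alpha>2$), and the step-size restriction; the $\widetilde\bpsi^{\tau,h}$ contribution is binding and dictates the final rate. A secondary technical point is to verify that the pointwise time-harmonic bound of Lemma~\ref{lem:pointwise-potential-bound} lifts to the stated discrete CQ bound without additional loss, which is why smoothness of $\Gamma$ enters the hypothesis.
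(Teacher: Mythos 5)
Your overall architecture (reduce the pointwise error to $\bL^p(\Gamma)$ errors of the densities, then transfer the $\abs{s}e^{-d\Re s}$ symbol bound of Lemma~\ref{lem:pointwise-potential-bound} to the discrete convolution at a cost $\tau^{-1}$) is the same as the paper's, and your identity $\widetilde\bpsi\tauh-\bpsi=\bigl(\ba(\bvarphi\tauh+\gamma_T\Hinc)-\ba(\bvarphi+\gamma_T\Hinc)\bigr)\times\bnu$ is exactly the right starting point. The gap is in the middle step, and it is quantitative, not cosmetic: your route cannot reach the exponent $\tfrac{2\alpha}{1+\alpha}$. You first extract an $\ell^{1+\alpha}(\bL^{1+\alpha})$ bound on $\be_\bvarphi$ of order $\delta:=\tau^m+h^{\alpha(k+1)}$ from the weighted coercivity term (that part of your argument is fine), and then push it through the H\"older continuity \eqref{hoelder} to get the $\widetilde\bpsi$ error. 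This gives $\sum_n\|\widetilde\bpsi\tauhn-\bpsi(t_n)\|_{\bL^{(1+\alpha)/\alpha}(\Gamma)}^{(1+\alpha)/\alpha}\le C\sum_n\|\bvarphi\tauhn-\bvarphi(t_n)\|_{\bL^{1+\alpha}(\Gamma)}^{1+\alpha}\le C\delta^{1+\alpha}$, hence only $\delta^{\alpha}$ after taking the $\tfrac{\alpha}{1+\alpha}$ root. Since $\alpha<\tfrac{2\alpha}{1+\alpha}$ for $\alpha\in(0,1)$, this is strictly weaker than the claimed rate, and the deficit $\delta^{-\alpha(1-\alpha)/(1+\alpha)}$ blows up as $\tau,h\to0$; no step-size restriction can absorb a pure power of $\delta$. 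You correctly identify the $\widetilde\bpsi$ contribution as binding, but your bound for it is too weak.

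The missing ingredient is the \emph{reverse} monotonicity inequality $(\bx-\by)\cdot(\ba(\bx)-\ba(\by))\ge c\abs{\ba(\bx)-\ba(\by)}^{(1+\alpha)/\alpha}$, which follows from the coercivity of $\ba^{-1}$ (the paper cites \cite[Lemma~2.3.16]{B15}). Applied to the nonlinear term of the error equation \eqref{eq:error-equation} — whose weighted sum is controlled by the defects at the quadratic rate $\delta^2$ — this bounds the $\ba$-difference, i.e.\ the $\widetilde\bpsi$ error, \emph{directly} by $\bigl(\delta^2\bigr)^{\alpha/(1+\alpha)}=\delta^{2\alpha/(1+\alpha)}$ in $\bL^{(1+\alpha)/\alpha}$, which is sharper than what H\"older continuity of $\ba$ composed with any $\bvarphi$-error can give (you are comparing $\delta^2$ against $\delta^{1+\alpha}$ at the level of the summed powers). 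The $\bvarphi$ error is then recovered \emph{from} the $\widetilde\bpsi$ error using the upper bound $\abs{\ba^{-1}(\bx)-\ba^{-1}(\by)}\le C\abs{\bx-\by}(\abs{\bx}^{(1-\alpha)/\alpha}+\abs{\bx-\by}^{(1-\alpha)/\alpha})$ — i.e.\ the direction of inference between the two densities is the opposite of yours, and it is precisely this $\|\widetilde\bpsi\tauh-\bpsi\|^{2/\alpha}$ term where the step-size restriction $\tau\le Ch^{4\alpha(k+1)}$ is actually needed. Two smaller bookkeeping issues: with the Riemann-sum weights the conversion $\ell^{1+\alpha}\to\ell^2$ is the one that costs $\tau^{-(1-\alpha)/(2(1+\alpha))}$, while $\ell^{(1+\alpha)/\alpha}\to\ell^2$ costs only a power of $T$ (you have these swapped); and the discrete lift of Lemma~\ref{lem:pointwise-potential-bound} should be done via \cite[Lemma~5.2]{BLM11} rather than an ad hoc Parseval argument.
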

\begin{proof}
(i) \emph{Properties of $\ba^{-1}$} \\
The nonlinearity $\ba^{-1}(\bx) = \abs{\bx}^{\frac{1-\alpha}{\alpha}}\bx$ fulfills, by \cite[Lemma 2.3.16]{B15}, the following positivity property 
\begin{align*}
(\ba^{-1}(\bx)-\ba^{-1}(\by) )\cdot (\bx-\by) \ge c\abs{\bx-\by}^{\frac{1+\alpha}{\alpha}}.
\end{align*}
In particular, this identity implies a positivity condition of $\ba$, which reads
\begin{align*}
(\bx-\by)\cdot(\ba(\bx)-\ba(\by) ) 
&= (\ba^{-1}(\ba(\bx))-\ba^{-1}(\ba(\by)) )\cdot(\ba(\bx)-\ba(\by) )
\\ &\ge c\abs{\ba(\bx)-\ba(\by)}^{\frac{1+\alpha}{\alpha}}.
\end{align*}
Furthermore, applying the fundamental theorem of calculus yields, by using the closed form \eqref{eq:Da-explicit-form} of the Jacobian of the nonlinearity $ \ba^{-1}$ , directly the following bound from above  
\begin{align*}
	\abs{\ba^{-1}(\bx)-\ba^{-1}(\by)}
\le \int_0^1\abs{ \boldsymb D \ba^{-1}_{\bx+\theta(\by-\bx)} }\abs{\bx-\by}\mathrm d\theta
\le C \abs{\bx-\by}\left( \abs{\bx}^{\frac{1-\alpha}{\alpha}}+\abs{\bx-\by}^{\frac{1-\alpha}{\alpha}} \right).
\end{align*}
(ii) \emph{Convergence of densities in $\bL^p$-spaces} \\
The identities for the inverse of the nonlinearity $\ba$ imply convergence results for the boundary densities in their respective $\bL^p$-setting, by modifying the proof of Theorem~\ref{them:semi-discr-time-experimental}.

 Applying the positivity to the nonlinear term on the left-hand side of the error equation \eqref{eq:error-equation} yields
\begin{align*}
 	&\left(\bvarphi^{\tau,h}_n-\Pi_h\bvarphi(t_n), \ba\left(\bvarphi^{\tau,h}_n+\gamma_T \H^\text{inc}_n \right)-\ba\left(\Pi_h\bvarphi(t_n)+\gamma_T \H^\text{inc}_n \right) \right)_\Gamma
 	\\& \quad\ge
	\norm{ \ba\left(\bvarphi^{\tau,h}_n+\gamma_T \H^\text{inc}_n \right)-\ba\left(\Pi_h\bvarphi(t_n)+\gamma_T \H^\text{inc}_n \right)}^{\frac{1+\alpha}{\alpha}}_{\bL^{\frac{1+\alpha}{\alpha}}(\Gamma)}.
\end{align*}
The error analysis of Theorem~\ref{them:semi-discr-time-experimental} then yields, under the stated conditions there, a constant $C$ independent of $h$ and $\tau$, such that
\begin{align*}
\tau\sum_{n=0}^N\norm{ \ba\left(\bvarphi^{\tau,h}_n+\gamma_T \H^\text{inc}_n \right)-\ba\left(\Pi_h\bvarphi(t_n)+\gamma_T \H^\text{inc}_n \right)}^{\frac{1+\alpha}{\alpha}}_{\bL^{\frac{1+\alpha}{\alpha}}(\Gamma)}
\le C\left(\tau^m+h^{\alpha(k+1)}\right)^{2}.
\end{align*}
Furthermore, the defect due to the projection $\Pi_h$ is bounded due to the Hölder  continuity of the nonlinearity $\ba$ by
\begin{align*}
\norm{\ba\left(\Pi_h\bvarphi+\gamma_T \H^\text{inc} \right)
	-\ba\left(\bvarphi+\gamma_T \H^\text{inc} \right)}_{\bL^{\frac{1+\alpha}{\alpha}}(\Gamma)}
\le 
\norm{\Pi_h\bvarphi 
	-\bvarphi }^\alpha_{\bL^{1+\alpha}(\Gamma)}
\le Ch^{\alpha(k+1)}.
\end{align*}
Successively applying the Hölder inequality and inserting the boundary condition \eqref{eq:gibc} into $\bpsi$ yields finally
\begin{align}\label{est: psi-tilde-error}
\begin{split}
&\left(\tau\sum_{n=0}^N\norm{\widetilde\bpsi\tauhn-\bpsi(\vtn)}^{2}_{\bL^{\frac{1+\alpha}{\alpha}}(\Gamma)}\right)^{\frac{1}{2}}
 \le
  T^{\frac{1-\alpha}{1+\alpha}}
\left(\tau\sum_{n=0}^N\norm{\widetilde\bpsi\tauhn-\bpsi(\vtn)}^{\frac{1+\alpha}{\alpha}}_{\bL^{\frac{1+\alpha}{\alpha}}(\Gamma)}\right)^{\frac{\alpha}{1+\alpha}}
\\ &
=
 T^{\frac{1-\alpha}{1+\alpha}}
 \left(\tau\sum_{n=0}^N\norm{\ba(\bvarphi\tauhn+\gamma_T \Hinc)-\ba(\bvarphi(\vtn)+\gamma_T \Hinc)}^{\frac{1+\alpha}{\alpha}}_{\bL^{\frac{1+\alpha}{\alpha}}(\Gamma)}\right)^{\frac{\alpha}{1+\alpha}}
\\ & \le
 C\left(\tau^m+h^{\alpha(k+1)}\right)^{\frac{2\alpha}{1+\alpha}}.
\end{split}
\end{align}
We turn towards the estimation of the error of $\bvarphi\tauh $, in terms of the  $\bL^{1+\alpha}(\Gamma)$ norm, which is bounded from above via
\begin{align*}
\abs{\bvarphi\tauh-\bvarphi}
&=
\abs{\ba^{-1}(\ba(\bvarphi\tauh+\gamma_T\Hinc))-\ba^{-1}(\ba(\bvarphi+\gamma_T\Hinc))}
\\  &\le C
\abs{\ba(\bvarphi\tauh+\gamma_T\Hinc)-\ba(\bvarphi+\gamma_T\Hinc)}
\abs{\ba(\bvarphi+\gamma_T\Hinc)}^{\frac{1-\alpha}{\alpha}}
\\ &+C\abs{\ba(\bvarphi\tauh+\gamma_T\Hinc)-\ba(\bvarphi+\gamma_T\Hinc)}^{\frac{1}{\alpha}}.
\end{align*}
Note that the nonlinear defect arising on the right-hand side is simply the pointwise error of $\widetilde\bpsi\tauh$. Taking both sides to the power of $1+\alpha$ and integrating over the boundary $\Gamma$ yields 
\begin{align*}
\norm{\bvarphi\tauh-\bvarphi}_{\bL^{1+\alpha}(\Gamma)}^{1+\alpha}
& \le
C\norm{\abs{\widetilde\bpsi\tauh-\bpsi}
	\abs{\bvarphi+\gamma_T\Hinc}^{1-\alpha} }^{1+\alpha}_{\bL^{1+\alpha}(\Gamma)}
\\ &+
C\norm{\widetilde\bpsi\tauh-\bpsi}^{\frac{1+\alpha}{\alpha}}_{\bL^{\frac{1+\alpha}{\alpha}}(\Gamma)}.
\end{align*}
The first summand is effectively rewritten by plugging the boundary condition into $\bpsi$, which gives the estimate
\begin{align*}
\norm{\abs{\widetilde\bpsi\tauh-\bpsi}
	\abs{\bvarphi+\gamma_T\Hinc}^{1-\alpha} }^{1+\alpha}_{\bL^{1+\alpha}(\Gamma)}
&= \int_\Gamma 
\abs{\widetilde\bpsi\tauh-\bpsi}^{1+\alpha}
\abs{\bvarphi+\gamma_T\Hinc}^{1-\alpha^2}  \mathrm d \bx
\\ & \le 
\norm{
\abs{\widetilde\bpsi\tauh-\bpsi}^{1+\alpha}}_{\bL^{\frac{1}{\alpha}}(\Gamma)}
\norm{\abs{\bvarphi+\gamma_T\Hinc}^{1-\alpha^2} }_{\bL^{\frac{1}{1-\alpha}}(\Gamma)}
\\ & =
\norm{\widetilde\bpsi\tauh-\bpsi}^{1+\alpha}_{\bL^{\frac{1+\alpha}{\alpha}}(\Gamma)}
\norm{\bvarphi+\gamma_T\Hinc }^{1-\alpha^2}_{\bL^{1+\alpha}(\Gamma)}.
\end{align*}
The factor depending only on the exact solution is  independent of $h$ and $\tau$ and bounded due to the regularity assumptions on $\bvarphi$. These estimates imply bounds on the error of the numerical approximation of the boundary density $\bvarphi$ via 
\begin{align*}
\left(\tau\sum_{n=0}^N\norm{\bvarphi\tauhn-\bvarphi(\vtn)}^{2}_{\bL^{1+\alpha}(\Gamma)}\right)^{\frac{1}{2}} 
&\le C\left(\tau\sum_{n=0}^N\norm{\widetilde\bpsi\tauhn-\bpsi(\vtn)}^{2}_{\bL^{\frac{1+\alpha}{\alpha}}(\Gamma)}\right)^{\frac{1}{2}} 
\\ &+
C\left(\tau\sum_{n=0}^N\norm{\widetilde\bpsi\tauhn-\bpsi(\vtn)}^{\frac{2}{\alpha}}_{\bL^{\frac{1+\alpha}{\alpha}}(\Gamma)}\right)^{\frac{1}{2}}.
\end{align*}
The first summand on the right-hand side has already been bounded by \eqref{est: psi-tilde-error}. As a consequence of Minkowski's inequality for $p = \frac{2}{1+\alpha}$, the following estimate holds
\begin{align*}
\tau^{\frac{2}{1+\alpha}}\sum_{n=0}^N\norm{\widetilde\bpsi\tauhn-\bpsi(\vtn)}^{\frac{2}{\alpha}}_{\bL^{\frac{1+\alpha}{\alpha}}(\Gamma)}
&\le
\left(\tau\sum_{n=0}^N\norm{\widetilde\bpsi\tauhn-\bpsi(\vtn)}^{\frac{1+\alpha}{\alpha}}_{\bL^{\frac{1+\alpha}{\alpha}}(\Gamma)}\right)^{\frac{2}{1+\alpha}}
\\ & \le
C\left(\tau^m+h^{\alpha(k+1)}\right)^{\frac{4}{\alpha+1}}.
\end{align*}
Rearranging and taking the square root on both sides yields the estimate
\begin{align*}
\left(\tau\sum_{n=0}^N\norm{\widetilde\bpsi\tauhn-\bpsi(\vtn)}^{\frac{2}{\alpha}}_{\bL^{\frac{1+\alpha}{\alpha}}(\Gamma)} \right)^{\frac{1}{2}}
&\le
C\tau^{\frac{\alpha-1}{2(1+\alpha)}}\left(\tau^m+h^{\alpha(k+1)}\right)^{\frac{2}{1+\alpha}}
\\ & \le 
C\left(\tau^m+h^{\alpha(k+1)}\right)^{\frac{2\alpha}{1+\alpha}},
\end{align*}
where the final estimate holds for the \brev mild mesh width restriction $h^{4\alpha(k+1)}\le C\tau $\erev, since then
\begin{align*}
\tau^{\frac{\alpha-1}{2(1+\alpha)}}\left(\tau^m+h^{\alpha(k+1)}\right)^{\frac{2}{1+\alpha}-\frac{2\alpha}{1+\alpha}}
& \le
2 \left(\tau^{-\frac{1}{2}}\left(\tau^{2m}+h^{2\alpha(k+1)}\right) \right)^{\frac{1-\alpha}{1+\alpha}} \le C.
\end{align*}
Overall, we obtain the complete error bound 
\begin{align}\label{complete-error-bound-Lp}
\begin{split}
	\left(\tau\sum_{n=0}^N\norm{\bvarphi\tauhn-\bvarphi(\vtn)}^{2}_{\bL^{\frac{1+\alpha}{\alpha}}(\Gamma)}\right)^{\frac{1}{2}}
	&+ 	\left(\tau\sum_{n=0}^N\norm{\widetilde\bpsi\tauhn-\bpsi(\vtn)}^{2}_{\bL^{\frac{1+\alpha}{\alpha}}(\Gamma)}\right)^{\frac{1}{2}}
	\\ &\le C\left(\tau^m+h^{\alpha(k+1)}\right)^{\frac{2\alpha}{1+\alpha}}.
\end{split}
\end{align}
(iii) \emph{Pointwise error bound}
Finally, the bound of Lemma~\ref{lem:pointwise-potential-bound}, formulated for the combined block potential operator defined in \eqref{def:W} reads
\begin{align*}
	\norm{\mathbfcal W_{\bx}(s)}_{\mathbb C^3 \times \mathbb C^3 \leftarrow \bL^{1+\alpha}(\Gamma)\times \bL^{\frac{1+\alpha}{\alpha}}(\Gamma)} \le C \abs{s}.
\end{align*}
The stated result is now given by \cite[Lemma 5.2]{BLM11} in combination with the error bound \eqref{complete-error-bound-Lp}.
\end{proof}

\section{\brev Numerical experiments \erev}

The proposed scheme \eqref{eq:full-dis} has been realized in Python by making use of the boundary element library Bempp \cite{bempp} to discretize the boundary integral operators with Raviart-Thomas elements. All codes used to generate the figures of this section are distributed through a Github repository \footnote{\href{https://github.com/joergnick/cqExperiments}{https://github.com/joergnick/cqExperiments}, last accessed on 06/01/2023.}

Consider two unit cubes, shifted from the origin, such that they are separated by a gap of length $l=0.5$. An electric planar wave illuminates the scatterers, and has the closed form
\begin{align}
\label{eq:incident-wave}
\Einc(t,x)=e^{- c(t-x_3-t_0)^2}\be_1,
\end{align}
with the orientation of $\boldsymbol{e_1} =(1,0,0)^T, \,t_0=-2$ and  $c = 100$. This \brev incident \erev wave is scattered from \brev the \erev cubes, where the nonlinear boundary condition with $\alpha = 0.5$ is employed at the boundary. We observe the scattered wave until the final time $T=3$, which captures most of the interaction of the incident field with the cubes.

In this setting, we employ the full discretization of the boundary integral equation and evaluate the time-discrete representation formula to obtain approximations of the scattered wave away from the boundary. 

In order to quantify the error of this approximation, the electric field has been computed at a single point away from the boundary (for our computations we used the origin $\boldsymb P=(0,0,0)$). 
The error is then estimated by computing a reference solution, for which $N=256$ time steps using the $3$-stage Radau IIA based Runge--Kutta convolution quadrature method were used in combination with a $0$-th order Raviart--Thomas boundary element discretization with $6228$ degrees of freedom, which corresponds to the mesh width $h=2^{-7/2}$. Mutually fixing the spatial resolution $h$, or respectively the time step size $\tau$, then yields the convergence plots in Figure~\ref{fig:space_conv} and Figure~\ref{fig:time_conv}.  

\brev \subsection*{Comparison to Theorem~\ref{them: pointwise}} The pessimistic error bounds of Theorem~\ref{them: pointwise} are, in this context, of the order $\mathcal O(\tau^{-1}(\tau^{4/3}+h^{1/3}))$. The numerical evidence indicates that the error behaviour is more accurately described by the convergence rates of the boundary densities of Theorem~\ref{them:semi-discr-time-experimental}, which are of the order $\mathcal O(\tau^2+h^{1/2})$. The order reductions for point evaluations, which are present in the error bounds described in Theorem~\ref{them: pointwise}, are not observed for the present example and might be an artifact of the analysis.
We note that the regularity assumptions of the solutions formulated in Theorem~\ref{them:semi-discr-time-experimental} \& \ref{them: pointwise} are unlikely to hold due to the low regularity of the scatterers. 

Moreover, the space convergence rate seems to be higher than the order $\mathcal O (h^{1/2})$, which would be the expected convergence order of the boundary densities. \erev The sharp increase in accuracy for the final data points might be explained by the comparable parameters used to compute the reference solution. The observed time convergence is more accurately captured by the theory, although the predicted order reductions for point evaluations (from Theorem~\ref{them: pointwise}) are not observed either. Asymptotically, the errors seem to approach the \brev stage \erev order $\mathcal O (\tau^2)$, though the empirical convergence orders are below this expected order of convergence. 

Overall, good convergence properties are observed despite the low regularity of the scatterer. Moreover, this convergence behavior is observed despite many underlying approximations during the implementation, such as the quadrature for the boundary integral operators, hierarchical matrix compression, iterative solution of the linear systems, Newton's method to solve the nonlinear system at each time step and the trapezoidal rule underlying the convolution quadrature method.
\brev 
\subsection*{Scattering from a sphere}
The influence of the parameters $\sigma$ (in the shifted formulation \eqref{bie-weak-shifted} and $\alpha$ is investigated in Figures~\ref{fig:alphas}--\ref{fig:shifts}. 
The incoming wave
\begin{align}
\label{eq:incident-wave-2}
\Einc(t,x)=\sin (c_1(x_3-t +t_0) )e^{-c_2 (t-x_3-t_0)^2}\be_1,
\end{align}
with the constants $c_1 = 20,c_2=2$ and $t_0 = -4$,
illuminates the unit sphere. We observe the scattering until the final time $T=6$. The following two experiments have been conducted with this setup.

\subsection*{On the parameter $\sigma$} In Figure~\ref{fig:shifts}, we fix a coarse grid with $72$ degrees of freedom ($h=0$) and compute several approximations with varying time step sizes and shifts in \eqref{bie-weak-shifted}. We realize the error norm of Theorem~\ref{them:semi-discr-time-experimental} by the coercivity property of the time-harmonic Calderón operator with a fixed frequency (in our experiments $s=1$) and then compute the errors of the traces by using a reference solution with $N=256$ and $\sigma = 1/3$. We then plot the time convergence plot for $\sigma\in\{0,1/3,2/3,1\}$ respectively. The empirical time convergence rate is remarkably close to the predicted convergence rate of Theorem~\ref{them:semi-discr-time-experimental} and independent of $\sigma$, which indicates that the numerical shift discussed in Remark~\ref{rem:shift} is not necessary for practical computations. Moreover, the results indicate that the time convergence rate of Theorem~\ref{them:semi-discr-time-experimental} is optimal.

\subsection*{On the parameter $\alpha$} We fix the $3$-stage Radau IIA time discretization with $N=16$ time steps, which already yields a reasonable approximation, as indicated by Figure~\ref{fig:shifts}. The numerical solution of the fully discrete scheme is computed with sequence of grids with decreasing mesh widths $h_j=2^{-j/2}$ for $j=0,...,4$ and $\alpha \in \{1/3,2/3,1\}$ respectively. The numerical solution is compared with a reference solution, that has been computed by setting $h_{\text{ref}}=2^{-7/2}$. The error is averaged over several points around the sphere, namely the set 
$$ \mathcal P = \left\{\pm c_p\be_1\,,\,\pm c_p\be_2,\pm c_p\be_3 \right\},
$$
with $c_p = 6/5$. As a consequence, the points in $\mathcal P$ are relatively close to the boundary of the scatterer, but not so close that the quadrature of the potential operator becomes a computational challenge. The empirical convergence rates of Figure~\ref{fig:alphas} are faster than the theoretical results from Theorem~\ref{them: pointwise} and moreover independent of $\alpha$. This faster convergence order might be explained by additional structure of the power-law type nonlinearity, that is not used in the proofs of Theorem~\ref{them:semi-discr-time-experimental}--\ref{them: pointwise} (in particular, $\ba$ is locally Lipschitz continuous when restricted to a domain away from the origin by at least a fixed distance ).

\erev

The paper concludes with Figure~\ref{fig:frames}, a visualization of \brev the solution of the nonlinear scattering problem used for the full discretization experiments of Figures~\ref{fig:space_conv}--\ref{fig:time_conv}, \erev which shows the $z=0.5$ plane at several time points.

\begin{figure}[htbp]
	\centering
	\includegraphics[trim = 0mm 0mm 0mm 0mm, clip,width=1.0\textwidth,height=0.6875\textwidth]{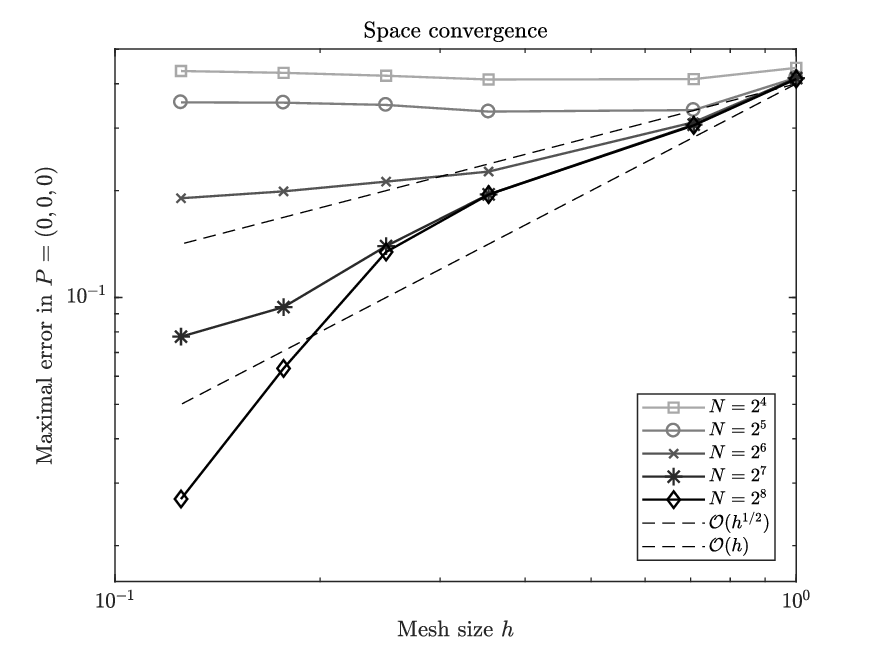}
	\caption{Space convergence plot of the fully discrete system, for $0$th order Raviart--Thomas boundary elements and the $2$-stage Radau IIA based Runge--Kutta convolution quadrature method. 
	The grids generated by the mesh generator embedded in the boundary element library Bempp coincide for $h=2^{-1/2}$ and $h=2^{-1}$, which causes the larger gap between the $4$-th and the $5$-th data point.}
	\label{fig:space_conv}
\end{figure}
\begin{figure}[htbp]
	\centering
		\includegraphics[trim = 0mm 0mm 0mm 0mm, clip,width=1.0\textwidth,height=0.6875\textwidth]{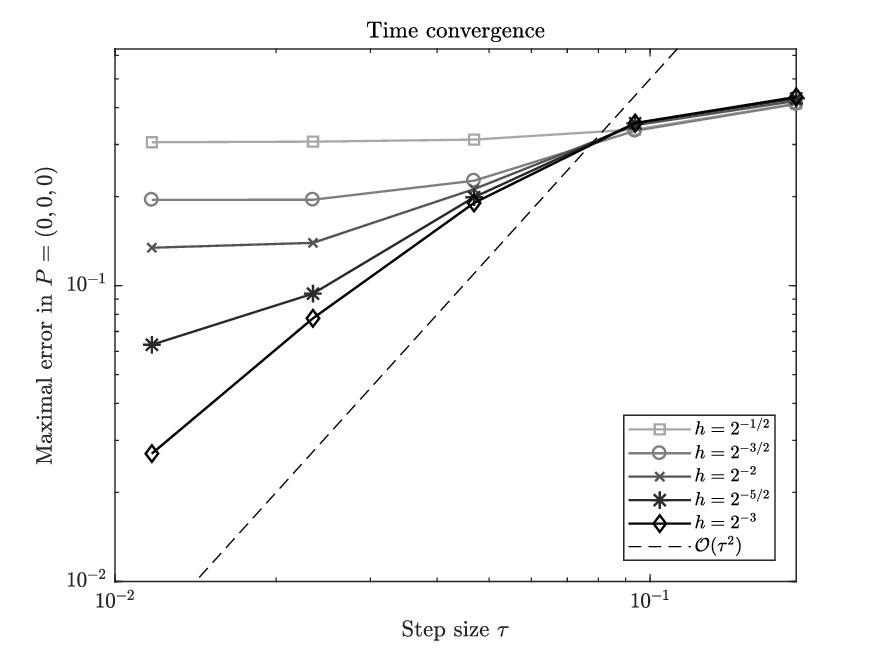}
	\caption{Time convergence plot of the fully discrete system, for $0$-th order Raviart--Thomas boundary elements and the $2$-stage Radau IIA based Runge--Kutta convolution quadrature method. }
	\label{fig:time_conv}
\end{figure}
\begin{figure}[htbp]
	\centering
	\includegraphics[trim = 0mm 0mm 0mm 0mm, clip,scale=0.8]{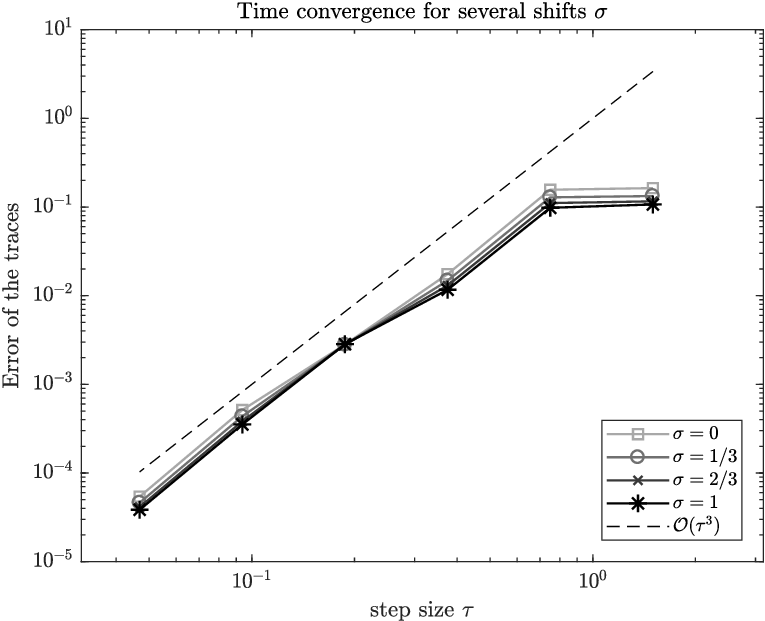}
	\caption{\brev Temporal semi-discretization error plot for scattering from a sphere with $m=3$. The shifted formulation \eqref{bie-weak-shifted} has been used with several shifts $\sigma$. \erev } \vspace{0.1cm}
	\label{fig:shifts}
\end{figure}
\begin{figure}[htbp]
	\centering
	\includegraphics[trim = 0mm 0mm 0mm 0mm, clip,scale=0.8]{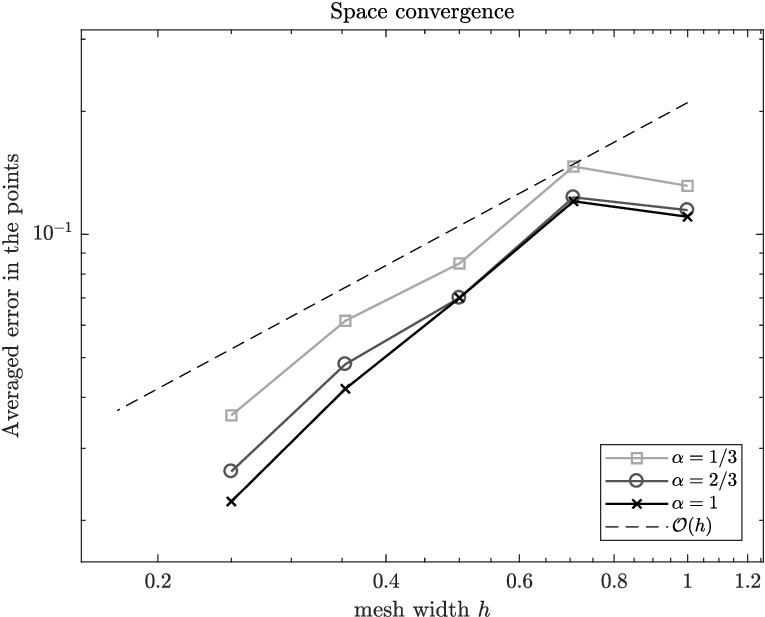}
	\caption{\brev Space convergence rates of the system with a time discretization, for $0$-th order Raviart--Thomas boundary elements with several values of \erev $\alpha$. 
}
	\label{fig:alphas}
\end{figure}

\newpage
\begin{figure}[htp]
	\centering
	\includegraphics[trim = 1mm 12mm 0mm 8mm, clip,scale=1.2]{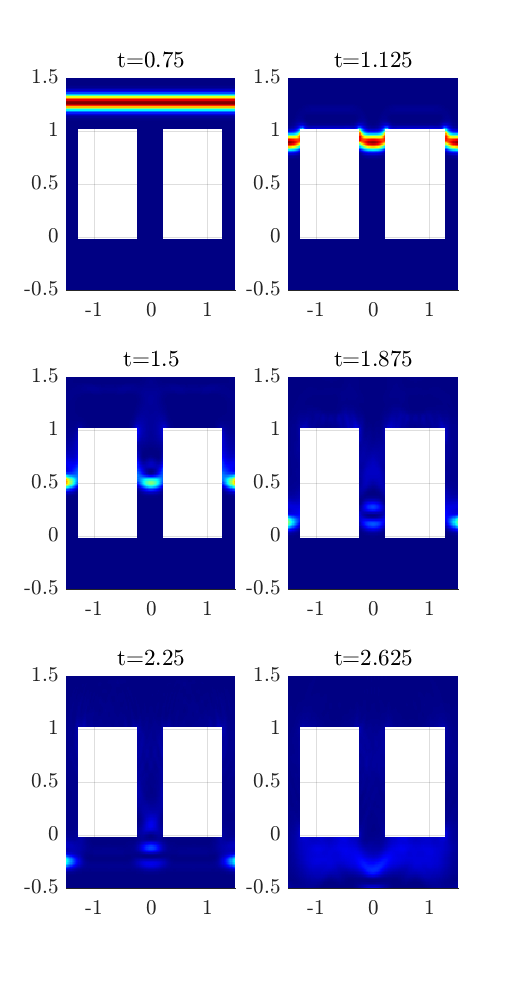}
	\\
	\caption{3D-scattering arising from two unit cubes, with a gap between them at several time points.  The approximation to the scattered wave was computed with $N=256$ time steps until the final time $T=3$ using the $3$-stage Radau IIA method, in combination with $0$th-order Raviart-Thomas boundary elements corresponding to the mesh width $h=2^{-5/2}$, which results in a boundary element space with $1620$ degrees of freedom. }
	\label{fig:frames}
\end{figure}

\section*{Acknowledgment}
The author is very grateful for the continuous support from Christian Lubich during the writing of the manuscript.
This work has been funded by the Deutsche Forschungsgemeinschaft (DFG, German Research Foundation) - Project-ID 58734477 - SFB 1173.  

\bibliographystyle{abbrv}
\bibliography{Lit}

\begin{thebibliography}{10}

\bibitem{AV96}
A.~Alonso and A.~Valli.
\newblock Some remarks on the characterization of the space of tangential
  traces of $h (rot; \omega)$ and the construction of an extension operator.
\newblock {\em Manuscripta Math.}, 89(1):159--178, 1996.

\bibitem{BBSV13}
J.~Ballani, L.~Banjai, S.~Sauter, and A.~Veit.
\newblock Numerical solution of exterior {M}axwell problems by galerkin {BEM}
  and {R}unge--{K}utta convolution quadrature.
\newblock {\em Numer. Math.}, 123(4):643--670, 2013.

\bibitem{BH86a}
A.~Bamberger and T.~Ha~Duong.
\newblock Formulation variationnelle espace-temps pour le calcul par potentiel
  retard{\'e} de la diffraction d'une onde acoustique ({I}).
\newblock {\em Math. Methods Appl. Sci.}, 8(1):405--435, 1986.

\bibitem{B10}
L.~Banjai.
\newblock Multistep and multistage convolution quadrature for the wave
  equation: algorithms and experiments.
\newblock {\em SIAM J. Sci. Comput.}, 32(5):2964--2994, 2010.

\bibitem{BanjaiKachanovska}
L.~Banjai and M.~Kachanovska.
\newblock Sparsity of {R}unge-{K}utta convolution weights for the
  three-dimensional wave equation.
\newblock {\em BIT}, 54(4):901--936, 2014.

\bibitem{BanjaiLubich2011}
L.~Banjai and C.~Lubich.
\newblock An error analysis of {R}unge-{K}utta convolution quadrature.
\newblock {\em BIT}, 51(3):483--496, 2011.

\bibitem{BanjaiLubich2019}
L.~Banjai and C.~Lubich.
\newblock Runge-{K}utta convolution coercivity and its use for time-dependent
  boundary integral equations.
\newblock {\em IMA J. Numer. Anal.}, 39(3):1134--1157, 2019.

\bibitem{BLM11}
L.~Banjai, C.~Lubich, and J.~M. Melenk.
\newblock {R}unge--{K}utta convolution quadrature for operators arising in wave
  propagation.
\newblock {\em Numer. Math.}, 119(1):1--20, 2011.

\bibitem{BLN20}
L.~Banjai, C.~Lubich, and J.~Nick.
\newblock Time-dependent acoustic scattering from generalized impedance
  boundary conditions via boundary elements and convolution quadrature.
\newblock {\em IMA J. Numer. Anal.}, 150(1):1--26, 2022.

\bibitem{BLS15}
L.~Banjai, C.~Lubich, and F.-J. Sayas.
\newblock Stable numerical coupling of exterior and interior problems for the
  wave equation.
\newblock {\em Numer. Math.}, 129(4):611--646, 2015.

\bibitem{BanjaiMessnerSchanz}
L.~Banjai, M.~Messner, and M.~Schanz.
\newblock Runge-{K}utta convolution quadrature for the boundary element method.
\newblock {\em Comput. Methods Appl. Mech. Engrg.}, 245/246:90--101, 2012.

\bibitem{BanjaiRieder}
L.~Banjai and A.~Rieder.
\newblock Convolution quadrature for the wave equation with a nonlinear
  impedance boundary condition.
\newblock {\em Math. Comp.}, 87(312):1783--1819, 2018.

\bibitem{BreF91}
F.~Brezzi and M.~Fortin.
\newblock {\em Mixed and hybrid finite element methods}, volume~15 of {\em
  Springer Series in Computational Mathematics}.
\newblock Springer-Verlag, New York, 1991.

\bibitem{BC03}
A.~Buffa and S.~H. Christiansen.
\newblock The electric field integral equation on {L}ipschitz screens:
  definitions and numerical approximation.
\newblock {\em Numer. Math.}, 94(2):229--267, 2003.

\bibitem{BCS02}
A.~Buffa, M.~Costabel, and D.~Sheen.
\newblock On traces for {$H(\curl, \Om)$} in {L}ipschitz domains.
\newblock {\em J. Math. Anal. Appl.}, 276(2):845--867, 2002.

\bibitem{BH03}
A.~Buffa and R.~Hiptmair.
\newblock Galerkin boundary element methods for electromagnetic scattering.
\newblock In {\em Topics in computational wave propagation}, pages 83--124.
  Springer, 2003.

\bibitem{DHJ06}
M.~Durufl{\'e}, H.~Haddar, and P.~Joly.
\newblock Higher order generalized impedance boundary conditions in
  electromagnetic scattering problems.
\newblock {\em C. R. Phys.}, 7(5):533--542, 2006.

\bibitem{ELN02}
M.~Eller, J.~E. Lagnese, and S.~Nicaise.
\newblock Stabilization of heterogeneous maxwell's equations by linear or
  nonlinear boundary feedbacks.
\newblock {\em Electron. J. Differential Equations}, 2002:Paper--No, 2002.

\bibitem{EN93}
B.~Engquist and J.-C. N\'{e}d{\'e}lec.
\newblock Effective boundary conditions for acoustic and electromagnetic
  scattering in thin layers.
\newblock Technical report, Technical Report of CMAP, 278, 1993.

\bibitem{GT98}
D.~Gilbarg and N.~S. Trudinger.
\newblock {\em Elliptic partial differential equations of second order}, volume
  224 of {\em Grundlehren der mathematischen Wissenschaften [Fundamental
  Principles of Mathematical Sciences]}.
\newblock Springer-Verlag, Berlin, second edition, 1983.

\bibitem{HJ01}
H.~Haddar and P.~Joly.
\newblock Effective boundary conditions for thin ferromagnetic coatings.
  asymptotic analysis of the 1d model.
\newblock {\em Asymptot. Anal.}, 27(2):127--160, 2001.

\bibitem{HJ02}
H.~Haddar and P.~Joly.
\newblock Stability of thin layer approximation of electromagnetic waves
  scattering by linear and nonlinear coatings.
\newblock {\em J. Comput. Appl. Math.}, 143(2):201--236, 2002.

\bibitem{HJN05}
H.~Haddar, P.~Joly, and H.-M. Nguyen.
\newblock Generalized impedance boundary conditions for scattering by strongly
  absorbing obstacles: the scalar case.
\newblock {\em Math. Models Methods Appl. Sci.}, 15(08):1273--1300, 2005.

\bibitem{HJN08}
H.~Haddar, P.~Joly, and H.-M. Nguyen.
\newblock Generalized impedance boundary conditions for scattering problems
  from strongly absorbing obstacles: The case of {M}axwell's equations.
\newblock {\em Math. Models Methods Appl. Sci.}, 18(10):1787--1827, 2008.

\bibitem{HairerWannerII}
E.~Hairer and G.~Wanner.
\newblock {\em Solving ordinary differential equations. {II}}, volume~14 of
  {\em Springer Series in Computational Mathematics}.
\newblock Springer-Verlag, Berlin, 1991.
\newblock Stiff and differential-algebraic problems.

\bibitem{KL17}
B.~Kov\'{a}cs and C.~Lubich.
\newblock Stable and convergent fully discrete interior--exterior coupling of
  {M}axwell’s equations.
\newblock {\em Numer. Math.}, 137(1):91--117, 2017.

\bibitem{L94}
C.~Lubich.
\newblock On the multistep time discretization of linear initial-boundary value
  problems and their boundary integral equations.
\newblock {\em Numer. Math.}, 67(3):365--389, 1994.

\bibitem{LubichOstermann_RKcq}
C.~Lubich and A.~Ostermann.
\newblock Runge-{K}utta methods for parabolic equations and convolution
  quadrature.
\newblock {\em Math. Comp.}, 60(201):105--131, 1993.

\bibitem{Ned01}
J.-C. N{\'e}d{\'e}lec.
\newblock {\em Acoustic and electromagnetic equations: integral representations
  for harmonic problems}.
\newblock Springer, 2001.

\bibitem{NKL2020}
J.~Nick, B.~Kov\'{a}cs, and C.~Lubich.
\newblock Correction to: {S}table and convergent fully discrete
  interior-exterior coupling of {M}axwell's equations.
\newblock {\em Numer. Math.}, 147(4):997--1000, 2021.

\bibitem{NKL22}
J.~Nick, B.~Kov{\'a}cs, and C.~Lubich.
\newblock Time-dependent electromagnetic scattering from thin layers.
\newblock {\em Numer. Math.}, 150(4):1123–1164, 2022.

\bibitem{RT77}
P.-A. Raviart and J.~M. Thomas.
\newblock A mixed finite element method for 2nd order elliptic problems.
\newblock In {\em Mathematical aspects of finite element methods ({P}roc.
  {C}onf., {C}onsiglio {N}az. delle {R}icerche ({C}.{N}.{R}.), {R}ome, 1975)},
  pages 292--315. Lecture Notes in Math., Vol. 606, 1977.

\bibitem{SD11}
M.~Slodi\v{c}ka and S.~Durand.
\newblock Fully discrete finite element scheme for maxwell's equations with
  non-linear boundary condition.
\newblock {\em J. Math. Anal. Appl.}, 375(1):230--244, 2011.

\bibitem{SZ08}
M.~Slodi\v{c}ka and V.~Zemanov{\'a}.
\newblock Time-discretization scheme for quasi-static maxwell's equations with
  a non-linear boundary condition.
\newblock {\em . Comput. Appl. Math.}, 216(2):514--522, 2008.

\bibitem{bempp}
W.~{\'S}migaj, T.~Betcke, S.~Arridge, J.~Phillips, and M.~Schweiger.
\newblock Solving boundary integral problems with {BEM++}.
\newblock {\em ACM Trans. Math. Software}, 41(2):1--40, 2015.

\bibitem{B15}
K.~Van~Bockstal.
\newblock Numerical techniques for partial differential equations in
  superconductivity and thermoelasticity.
\newblock 2015.

\bibitem{SV12}
V.~Vr{\'a}bel and M.~Slodi\v{c}ka.
\newblock An eddy current problem with a nonlinear evolution boundary
  condition.
\newblock {\em J. Math. Anal. Appl.}, 387(1):267--283, 2012.

\end{thebibliography}

\end{document}